\newtheorem{thm}{Theorem}[section]
\newtheorem{cor}[thm]{Corollary}
\newtheorem{conj}[thm]{Conjecture}
\newtheorem{lem}[thm]{Lemma}
\newtheorem{exa}[thm]{Example}
\newtheorem{pro}[thm]{Proposition}
\theoremstyle{definition}
\newtheorem{defi}[thm]{Definition}
\newtheorem{rem}[thm]{Remark}
\newtheorem{que}[thm]{Question}
\newtheoremstyle{break}
  {\topsep}{\topsep}%
  {\itshape}{}%
  {\bfseries}{}%
  {\newline}{}%
\theoremstyle{break}
\newtheorem{theorem}[thm]{Theorem}
\begin{document}
\title[Hopf triangulations and equilibrium triangulations]
{Hopf triangulations of spheres and equilibrium triangulations
of projective spaces}
\author[Wolfgang K\"uhnel \& Jonathan Spreer]{Wolfgang K\"uhnel \& Jonathan Spreer}
%%%%%%%%%%%%%%%%%%%%%%%%%%%%%%%%%%%%%%%%%%%%%%%%%%%%%%%%%%%%%%%%%%%%%%%
\thanks{}
\subjclass[2020]{Primary: 57Q15; Secondary: 52B15, 52B70, 57Q91}
\keywords{Real projective space, Complex projective space, triangulations of manifolds, equilibrium triangulations, Hopf fibrations}

\begin{abstract}

{\small Following work by the first author and Banchoff, we investigate 
triangulations of real and complex projective spaces of real and complex
dimension $k$ that are adapted to the decomposition into ``zones of 
influence'' around the points $[1,0,\ldots,0],$ $\ldots,$ $[0,\ldots,0,1]$
in homogeneous coordinates. 
The boundary of such a ``zone of influence''
must admit a simplicial version of the Hopf decomposition of a sphere into
``solid tori'' of various dimensions. We present such {\em Hopf triangulations}
of $S^{2k-1}$ for $k \leq 4$, and give candidate triangulations for 
arbitrary $k$.

In the complex case, a crucial role of this construction is the central 
$k$-torus as the intersection of all ``zones of influence''.
Candidate triangulations of the $k$-torus with $2^{k+1}-1$, $k\geq 1$,
vertices -- possibly the minimum numbers -- are well known. They admit an
involution acting like complex conjugation and an automorphism of order 
$k+1$ realising the cyclic shift of coordinate directions in $\mathbb{C}P^k$.
For $k=2$, this can be extended to what we call a {\em perfect equilibrium 
triangulation} of $\mathbb{C}P^2$, previously described in the literature. 
We prove that this is no longer possible for $k=3$, and no perfect equilibrium
triangulation of $\mathbb{C}P^3$ exists.
In the real case, the central torus is replaced by its fixed-point set 
under complex conjugation: the vertices of a $k$-dimensional cube. We revisit
known equilibrium triangulations of $\mathbb{R}P^k$ for $k\leq 2$, and describe
new equilibrium triangulations of $\mathbb{R}P^3$ and $\mathbb{R}P^4$. 

Finally, we discuss the most symmetric and vertex-minimal triangulation
of $\mathbb{R}P^4$ and present a tight polyhedral embedding
of $\mathbb{R}P^3$ into 6-space. No such embedding was known before.}
\end{abstract}
\maketitle

\section{Introduction}  
\label{sec:intro}

In geometric topology, manifolds are often studied by first decomposing them into simplices. Such {\em triangulations} allow for a finite description of the underlying space, and are a suitable starting point for manual, as well as computer-aided calculations. Triangulations of manifolds are most useful, when their combinatorial properties simplify a topological analysis. As a result, a large body of literature is concerned with describing triangulations of manifolds that are, for one reason or another, particularly ``nice'' to work with. 

Here, ``nice'' can mean that triangulations are highly symmetric, see \cite{Bu1,Casella,CD,KoLu,Sp1,Sp2}. It can also mean that they are very economical in how many simplices they use, see \cite{BasakDatta,BasakSpreer,FGGTV,IshikawaNemoto,lensspaces,torusbundles,Ku2,KL}, but also -- again -- \cite{Casella}. However, here we focus on a third type of ``nice'' triangulations. Namely, triangulations with combinatorial features revealing topological properties of the underlying manifolds. This, as well, has been done in various contexts and dimensions \cite{BK,Bu1,JR,KleeNovik,KuLu}. Arguably most relevant for us are triangulations that respect decompositions of a manifold into handlebodies. For an example, consider the framework of {\em layered triangulations} \cite{JR} of arbitrary 3-manifolds, and in particular lens spaces, respecting a given Heegaard splitting. 

It is worthwhile mentioning that the above list of articles covers work on many different types of triangulations and simplicial cell-complexes. 
This article, however, is written from the viewpoint of combinatorial topology, where piecewise linear (PL) manifolds are triangulated as abstract simplicial complexes supporting the PL structure of the underlying space. 

\medskip

We focus on describing triangulations of complex and real projective spaces that respect the structure of their homogeneous coordinates: $\mathbb{C}P^k$ and $\mathbb{R}P^k$ can be decomposed into $2^{k+1}-1$ subsets, one for each non-empty subset of homogeneous coordinates being dominant in absolute value. We call a triangulation containing these subsets as subcomplexes an {\em equilibrium triangulation}. Such equilibrium triangulations of low-dimensional projective spaces have been described in the literature \cite{BK}. This concept has been generalised in \cite{BasakSarkar,DattaSarkar}. Here, we survey the results on equilibrium triangulations of projective spaces, and describe new such triangulations.
	
\medskip

In more detail, we introduce the notion of {\em Hopf triangulations} of spheres (Section~\ref{sec:hopf}) as triangulations respecting the Hopf fibration of odd-dimensional spheres in their combinatorial structure. We also formalise the notion of {\em equilibrium triangulations} of projective spaces (Sections~\ref{sec:equilcomplex} and \ref{sec:equilreal}). In this setup, Hopf triangulations of spheres occur in equilibrium triangulations of complex projective spaces as boundaries of ``zones of influence'' with a single coordinate direction being dominant in absolute value. 

We also introduce what we call a {\em perfect equilibrium triangulation} of $\mathbb{C}P^k$ (Section~\ref{sec:equilcomplex}) containing the smallest known triangulation of the central torus, and respecting complex conjugation and coordinate shifts as combinatorial automorphisms. Moreover, we define what we call {\em nice equilibrium triangulations} of $\mathbb{R}P^k$ (Section~\ref{sec:equilreal}) as triangulations using -- in some precise sense -- the smallest possible number of vertices.

As our main findings, we present Hopf triangulations of $S^{2k-1}$ for $k \leq 4$, and conjecture that the boundary complex of the $k$-cyclic polytope $\operatorname{kC}(1,2,4,\ldots , 2^{k-1}; n)$, with $n=2^{k+1}-1$ vertices, is a Hopf triangulation of $S^{2k-1}$ (see Conjecture~\ref{conj:kcyclic}). 
In addition, we show that no perfect equilibrium triangulation of $\mathbb{C}P^3$ can exist (Theorem~\ref{thm:noperfectcp3}), and present a nice equilibrium triangulation of real projective $4$-space $\mathbb{R}P^4$ (Theorem~\ref{thm:nicerp4}).

\medskip

Tight embeddings of projective spaces are well known in differential
geometry, but up to now no tight polyhedral $\mathbb{R}P^k$ has been known for $k
\geq 3$. In Section~\ref{sec:tight} we present a tight polyhedral embedding of $\mathbb{R}P^3$ into
$6$-space. Moreover, in Section~\ref{sec:min} we give a simple description of the vertex-minimal
and most symmetric triangulation of $\mathbb{R}P^4$ in terms of a
centrally-symmetric simplicial $5$-polytope.
We conclude with Section~\ref{sec:problems}, where we discuss various open problems.

\subsection*{Acknowledgements}

Research of the second author is supported in part under the Australian Research Council's Discovery funding scheme
(project number DP220102588). This article was completed while the second author was on sabbatical at the 
Technische Universit\"at (TU) Berlin. The authors would like to thank TU Berlin, and particularly
Michael Joswig and his research group, for their hospitality in times of difficult circumstances.

\section{Hopf triangulations of odd-dimensional spheres}
\label{sec:hopf}

We interpret an odd-dimensional sphere $S^{2k-1}$
as the unit sphere in $\mathbb{C}^k$ and -- as its real counterpart --
the unit sphere $S^{k-1}$ in $\mathbb{R}^k$ as the fixed point set of $S^{2k-1}$ under complex conjugation.
By the {\em Hopf fibration} of spheres we mean the fibering by the Hopf action
of $S^1 = \{e^{it}\}$ on $\mathbb{C}^k$ with fibres $\{ e^{it} \cdot x \,\mid\, t \in [0,2\pi ) \}$, $x \in S^{2k-1}$. 
The real analogue is just the real part with fibres defined by the antipodal $S^0 = \{ \pm 1 \}$-action on~$S^{k-1}$.

\medskip

An odd-dimensional sphere $S^{2k-1} = \{(z_1,z_2,\ldots,z_k) \ | \ \sum_i |z_i|^2 = 1\} \subset \mathbb{C}^k$ admits
a natural decomposition, called the {\it Hopf decomposition}, into $k$ parts
\[A_i = \{(z_1,z_2,\ldots,z_k) \in S^{2k-1} \ | \ |z_i| \geq  |z_j| \mbox{ for all } j \}\]
where each $A_i$
is diffeomorphic with a ``solid torus'' $S^1 \times B^{2k-2}$. Naturally,
$S^{2k-1} = A_1 \cup \cdots \cup A_k$ such that the intersection 
$A_{ij} = A_i \cap A_j$
of any pair is diffeomorphic with a ``solid torus'' 
$S^1 \times S^1 \times B^{2k-4}$, the intersection 
$A_{ijl} = A_i \cap A_j \cap A_l$ of 
any triple is diffeomorphic with
$(S^1)^3\times B^{2k-6}$ and so on, until finally $A_1 \cap \cdots \cap A_k$
is the $k$-dimensional {\it central torus} 
\[A_{123 \ldots k} = \{(z_1,z_2,\ldots,z_k) \in S^{2k-1} \ | \ |z_1| = \cdots = |z_k|\} \cong (S^1)^k = T^k.\]

In the case $k=2$, the mapping of $S^3$ onto the total space of its fibres 
recovers the classical {\it Hopf mapping} $S^3 \to S^2$,
where each of the two solid tori $A_1, A_2$ is mapped to a 2-disc (a hemi-sphere of $S^2$),
and the two solid tori fit together along the central $2$-torus 
$S^1\times S^1 = A_{12}= A_1 \cap A_2$ which is mapped onto the
intersection of the two hemispheres (the equator). It follows from the definition
that each part $A_i, A_{ij}, \ldots , A_{123 \ldots k}$ is a union of Hopf fibres.
This motivates the following definition.

\begin{defi}[Hopf triangulation of $S^{2k-1}$]
\label{def:hopf}
A combinatorial triangulation of the sphere $S^{2k-1}$ is called a 
{\em Hopf triangulation}, if all subsets $A_i, A_{ij}, A_{ijl},  \ldots ,
A_{123 \ldots k}$ of its Hopf decomposition are PL homeomorphic with  
subcomplexes of the simplicial complex realising the triangulation.

Here, an abstract simplicial complex is called a {\em combinatorial triangulation
of a (PL) $d$-dimensional manifold}, if the link of every $m$-dimensional
simplex is a triangulated PL standard $(d-m-1)$-sphere.
\end{defi}

The minimum number of vertices for a Hopf triangulation 
of $S^{2k-1}$ is at least the minimum number of vertices which is
required for a $k$-torus $T^k$. This, in turn, is at most $2^{k+1}-1$,
since there are examples of such (combinatorial) triangulations for any $k$ 
\cite{Gr,Ku-La-tori}. It is furthermore conjectured that $2^{k+1} -1$ is also
the smallest possible number of vertices for any combinatorial triangulation of 
$ T^k$, see, for instance \cite[Page 106]{Br-Ku2}. 

\medskip

\noindent
{\bf The real counterpart.} Any sphere $S^{k-1} \subset \mathbb{R}^{k}$
appears as the fixed point set
of complex conjugation of $S^{2k-1} \subset \mathbb{C}^{k}$. We define
a {\it real Hopf decomposition} of $S^{k-1}$ as the real part of the
Hopf decomposition of $S^{2k-1}$ with the same notation otherwise.
Each of the $k$ parts is a ``solid torus'' of type $S^0 \times B^{k-1}$.
The collection of all $k$ parts together can be interpreted as the set
of pairs of antipodal facets of a $k$-dimensional cube, and the
``equilibrium torus'' appears as the set of vertices of this cube.
Consequently the minimum number of vertices of a real Hopf triangulation
of $S^{k-1}$ is $2^k$.

\begin{rem}
It follows from the definition of a Hopf triangulation
that its various parts define a Boolean algebra
which is (anti-)isomorphic with the power set of $\{1,2 \ldots, k\}$
by the assignment $\{i_1,i_2, \ldots i_m\} \mapsto A_{i_1i_2\ldots i_m}$.
Here, the empty set corresponds to the entire sphere.
\end{rem}

\begin{exa} \label{CP2} 
Recall that the $3$-sphere has Hopf decomposition $S^3 = A_1 \cup A_2$ into 
two solid tori $A_1$ and $A_2$ with
a $2$-dimensional torus as the intersection $A_{12} = A_1 \cap A_2$.
A Hopf triangulation with $7$ vertices (actually, the smallest possible number)
is given by the boundary complex of the
{\it cyclic $4$-polytope} $C(7,4)$ \cite{A}, \cite[Theorem 0.7]{Zi} 
which can be defined as
the cyclic $\mathbb{Z}_7$-orbits of the two tetrahedra
$(0 \ 1 \ 3 \ 4)$ and $(0 \ 1 \ 2 \ 3)$ (under $n \mapsto (n+1) \mod 7$) where the second orbit
equals as the first one with vertex labels multiplied by $2 \mod 7$. 

The central torus
is realised as the common boundary of each of them, the unique 
$7$-vertex triangulation represented
by the orbits of $(0 \ 1 \ 3)$ and $(0 \ 2 \ 3)$ modulo $7$,
which is invariant under the multiplication by $2 \mod 7$,
see Figure~\ref{fig:torus}. Note that there is a third solid torus 
of this kind generated by $(0 \ 2 \ 4 \ 6)$, since $2^3 = 1 \mod 7$. 

This leads to three different
Hopf triangulations permuted by multiplication by $2 \mod 7$.
This fact is crucial for the construction of a perfect equilibrium triangulation 
of $\mathbb{C}P^2$, see Proposition~\ref{CP2-equilibrium} and \cite{BK} for details.
\end{exa}

\begin{exa} 
\label{exa:bicyclic}
For any $n = m^2 + m + 1$, there exists a $\mathbb{Z}_n$-symmetric
Hopf triangulation of $S^3$ with $\mathbb{Z}_n$-orbit generators
\[(0 \ 1 \ 2 \ 3), (0 \ 1 \ 3 \ 4), (0 \ 1 \ 4 \ 5), \ldots, (0 \ 1 \ m \ m+1), \]
forming a solid torus, and the same generators after
multiplying them by $m \mod n$ forming the complementary solid torus
to complete $S^3$. 
This triangulation coincides with the boundary complex 
of the {\it bi-cyclic $4$-polytope} of type $\operatorname{2C}(1,m;n)$ \cite{BK,Sm}.
The intermediate torus with generators $(0 \ 1 \ m+1)$ and $(0 \ m \ m+1)$
is one of the chiral tori $\{3,6\}_{(m,1)}$ and also
a special case of the tori considered in \cite{A}. 
The Hopf triangulation
from Example~\ref{CP2} coincides with the case $m=2$. 
\end{exa}

\begin{figure}[bt]
\centering
\epsfig{figure=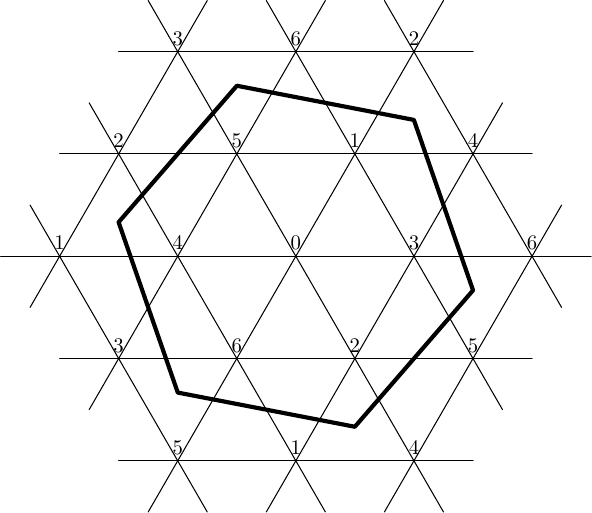,height=50mm}
\caption{The unique 7-vertex triangulation of the torus \label{fig:torus}}
\end{figure}

\noindent
{\bf Triangulations of the central $k$-torus
  and the $(k+1)$-dimensional solid tori.}

For every $k$ there is a very natural candidate for the triangulation of
the central $k$-torus \cite{Ku-La-tori, Ku-La-permcycle}.
One can directly write down all $k! \cdot (2^{k+1}-1)$ top-dimensional
simplices in terms of a single {\it permuted difference cycle}, or {\it permcycle}
for short, see Example~\ref{example:3torus} for notation and some explanations and \cite{Ku-La-permcycle} for a precise definition.

\begin{exa}[3-torus]
\label{example:3torus}
The permcycle 
{\bf 1~2~4~8} is defined as the union of the cyclic $\mathbb{Z}_n$-orbits, $n = 1 + 2 + 4 + 8 = 15$,
of the $3$-simplex with the same labels
%$(1 \ 2 \ 4 \ 8)$ 
and its $3!$ permutations
\[(1  \ 2 \ 4 \ 8), (1 \ 4 \ 2 \ 8), (2 \ 1 \ 4  \ 8), ( 2 \ 4 \ 1 \ 8), (4 \ 1 \ 2 \ 8), (4 \ 2 \ 1 \ 8).\] 

This simplicial complex triangulates the $3$-dimensional torus, as is shown in \cite{Ku-La-permcycle}.
\end{exa}

More generally, a triangulated $k$-torus with $n = 2^{k+1}-1$ vertices is given by the permcycle
\[{\bf 1 \ 2 \ 4 \ 8 \ \ldots \ 2^{k-1} \ 2^k}\] 
generating $k!$ cyclic orbits and hence a total of $k! \cdot n$ simplices of dimension $k$ \cite{Ku-La-tori,Ku-La-permcycle}.
This triangulation is invariant under the following automorphisms:
\begin{eqnarray}
  \tau \colon & x \mapsto x+1 \mod 2^{k+1} - 1 \label{eq:tau} \\
  \rho \colon & x \mapsto -x \mod 2^{k+1} - 1 \label{eq:rho} \\
  \sigma \colon & x \mapsto 2x \mod 2^{k+1} - 1  \label{eq:sigma}
\end{eqnarray}
Equations~(\ref{eq:tau})--(\ref{eq:sigma}) will be referred to frequently throughout the article.
An associated $(k+1)$-dimensional solid torus is given by the permcycle
\[{\bf 1 \ 1 \ 1 \ 4 \ 8 \ \cdots 2^{k-1} \ 2^k},\]
generating a list of \ $n \cdot (k+1)!/3!$ \ simplices, with the above 
triangulation of a $k$-torus as its boundary.
This solid torus is invariant under $\tau$ and $\rho$, but not under $\sigma$. Instead,
$\sigma$ generates an orbit of $k+1$ such solid tori (referred to as its {\em multiples}) -- all necessarily with the
same boundary. They pairwise intersect only in their common boundary (the central $k$-torus). 

\medskip

Geometrically, the central $k$-torus is isometric with a quotient
of the standard lattice triangulation in the hyperplane
$\{x_0 + x_1 + \cdots + x_k = 0\}$ of $(k+1)$-space, such
that $\tau$ corresponds to a translation, $\rho$ corresponds to
a central reflection, and $\sigma$ corresponds to the cyclic shift
of the $k+1$ coordinates \cite{Br-Ku2,Ku-La-tori}.
In each of the solid $(k+1)$-tori, exactly one of the $(k+1)$ coordinate
directions is homotopic to zero. We can assume that introducing
the triangle $(1 \ 2 \ 3)$ corresponds to the direction of the first coordinate.

The fixed point set of $\rho$ consists of $2^k$ isolated points, namely,
the vertex $0$ and the midpoints of all edges $(x\ -x) =(x\ x')$ for 
$x \in \mathbb{Z}_n \setminus \{0\}$ (throughout this article,
we write $x'$ for vertex $-x$ in a simplex).
The fixed point set of one solid $(k+1)$-torus consists of 
a family of $2^{k-1}$ disjoint edges (geometrically: the family
of parallel edges in a $k$-cube). This is interesting in view of relating 
equilibrium triangulations of $\mathbb{C}P^k$ to equilibrium triangulations of $\mathbb{R}P^k$.

\begin{pro}[see \cite{Ku-La-tori}]\label{k-tori}
The above mentioned triangulations of the $k$-torus invariant under $\tau$, $\rho$, and $\sigma$, together with 
$k+1$ associated solid tori of dimension $k+1$ invariant under $\tau$ and $\rho$, and cyclically interchanged by $\sigma$,
exist for every $k \ge 1$.
\end{pro}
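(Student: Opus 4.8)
The plan is to exhibit the required complexes as the permcycles already named in the text and to verify their properties, invoking \cite{Ku-La-tori,Ku-La-permcycle} for the purely topological assertions. Fix $n=2^{k+1}-1$, let $K$ be the simplicial complex generated over $\mathbb{Z}_n$ by the permcycle ${\bf 1\ 2\ 4\ \ldots\ 2^k}$, and let $L_0$ be the one generated by ${\bf 1\ 1\ 1\ 4\ 8\ \ldots\ 2^k}$. That $K$ is a combinatorial triangulation of $T^k$, and that $L_0$ is a combinatorial triangulation of a $(k+1)$-dimensional solid torus with $\partial L_0=K$, is exactly the content of \cite{Ku-La-tori} (building on \cite{Ku-La-permcycle}); I would quote this rather than reprove it. Everything else reduces to arithmetic modulo $n$ together with the standard calculus of difference cycles: removing a vertex from a simplex merges two cyclically adjacent entries $d_{i-1},d_i$ of its difference cycle into $d_{i-1}+d_i$; the automorphism $\tau$ rotates a $\mathbb{Z}_n$-orbit, $\rho$ reverses a difference cycle (up to reversing the reading direction), and $\sigma$ multiplies every entry of a difference cycle by $2\bmod n$, which is a bijection of $\mathbb{Z}_n$ since $n$ is odd.

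Invariance of $K$ and $L_0$ under $\tau$ is immediate, as both are unions of full $\mathbb{Z}_n$-orbits. Reversal permutes the underlying multiset of a difference cycle, and both $\{1,2,4,\ldots,2^k\}$ and $\{1,1,1,4,8,\ldots,2^k\}$ are closed under reversal of their cyclic arrangements, so $\rho$ fixes $K$ and $L_0$. For $\sigma$ on $K$: since $2^{k+1}\equiv 1\pmod n$, doubling sends $\{1,2,\ldots,2^k\}$ to $\{2,4,\ldots,2^{k+1}\}\equiv\{1,2,\ldots,2^k\}$, so $\sigma$ permutes the $k!$ generating orbits of $K$ and fixes $K$. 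For $\sigma$ on $L_0$: doubling sends $\{1,1,1,4,8,\ldots,2^k\}$ to $\{2,2,2,8,\ldots,2^{k+1}\}\equiv\{1,2,2,2,8,\ldots,2^k\}$, a \emph{different} multiset, so $L_1:=\sigma(L_0)\ne L_0$. Iterating, $L_j:=\sigma^j(L_0)$ is generated by the permcycles whose multiset $M_j$ is $\{2^j,2^j,2^j,2^{j+2},\ldots,2^{j+k}\}$ reduced mod $n$; the repeated value $2^j\bmod n$ determines $j$ for $0\le j\le k$ (when $k\ge 2$; for $k=1$ the two multiples coincide as subcomplexes), while $2^{k+1}\equiv 1$ forces $\sigma^{k+1}=\mathrm{id}$. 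Hence the $L_j$ form a $\sigma$-orbit of exactly $k+1$ solid $(k+1)$-tori, and since $\sigma$ is a bijection of the vertex set $\mathbb{Z}_n$ it carries $L_0$ to isomorphic complexes, so each $L_j$ is again a combinatorial solid $(k+1)$-torus.

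It remains to show $\partial L_j=K$ for all $j$ and that the $L_j$ meet pairwise in $K$ only; because $\sigma$ fixes $K$ and permutes the $L_j$, it is enough to treat $j=0$. A $k$-face of $L_0$ is a boundary face precisely when it lies in a unique top simplex. A $k$-face obtained by merging two of the three $1$'s has difference multiset $\{1,2,4,\ldots,2^k\}$ and can be recompleted to a top simplex of $L_0$ in only one way (the unique splitting $2=1+1$), so it is a boundary face; a $k$-face obtained by any other merge contains an entry that is not a power of $2$ (such as $1+4$, $2^m+2^{m+1}$, or the wrap-around $2^k+1$) and can be re-split in two cyclic orders, hence lies in two top simplices and is interior. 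A short count — each top simplex contributes one boundary facet for each adjacent pair of $1$'s, and the resulting $\mathbb{Z}_n$-orbits exhaust the $k!$ cyclic arrangements of $\{1,2,4,\ldots,2^k\}$ — then gives $\partial L_0=K$. Finally, a top $(k+1)$-simplex determines its difference multiset, so $L_i$ and $L_j$ share no $(k+1)$-simplex for $i\ne j$, and a closer inspection of which $k$-face multisets can arise from both $M_i$ and $M_j$ shows $L_i\cap L_j=K$. The main obstacle is precisely this last bookkeeping — identifying $\partial L_0$ and the pairwise intersections uniformly in $k$ — together with the recognition that the permcycle complexes are genuine combinatorial triangulations of $T^k$ and of a solid $(k+1)$-torus; both points are supplied by \cite{Ku-La-tori}, so in a full write-up I would cite that reference for them and present in detail only the arithmetic verifications of $\tau$-, $\rho$-, and $\sigma$-(non)invariance.
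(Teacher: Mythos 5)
The paper gives no proof of its own here: Proposition~\ref{k-tori} is stated with ``see \cite{Ku-La-tori}'' and the preceding paragraph asserts with the same citation all the properties the proposition records. Your proposal takes exactly the same route, deferring to \cite{Ku-La-tori,Ku-La-permcycle} for the two topological facts (that the permcycle ${\bf 1\ 2\ 4\ \cdots\ 2^k}$ triangulates $T^k$, and that ${\bf 1\ 1\ 1\ 4\ \cdots\ 2^k}$ triangulates a solid $(k+1)$-torus bounding it) and then spelling out the routine arithmetic on difference cycles --- closure of the two multisets under reversal for $\rho$, doubling modulo $n=2^{k+1}-1$ for $\sigma$, and the order of $2$ mod $n$ being $k+1$ --- that the paper leaves implicit. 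So the two are essentially the same approach, with your write-up supplying the arithmetic details the paper omits. Your flagging of the $k=1$ degeneracy is correct: there ${\bf 1\ 1\ 1}$ and ${\bf 2\ 2\ 2}$ both describe the single triangle on $\mathbb{Z}_3$, $\sigma=\rho$, and the ``$k+1$ cyclically interchanged solid tori'' collapse to one; this is a small looseness in the proposition's own wording rather than a defect of your argument. The one place that deserves a caution is your final sentence: $L_i\cap L_j=K$ does \emph{not} follow merely from $L_i$ and $L_j$ sharing no top simplex, since a lower-dimensional face could a priori appear among the faces of top simplices of both with differently partitioned difference cycles. That ``closer inspection'' is a genuine, if routine, piece of bookkeeping that would have to be carried out in a full write-up (or, as you already do for the other topological claims, explicitly attributed to \cite{Ku-La-tori}).
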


Altogether, this triangulated $k$-torus and
($k$ of the $k+1$) solid $(k+1)$-tori are excellent building blocks for a Hopf triangulation of a $(2k-1)$-sphere.

\medskip

\noindent
{\bf Hopf triangulations of the $5$-sphere.}
\label{sec:fivesphere}

If we require a Hopf triangulation of the $5$-dimensional sphere to {\em (a)} contain the $3$-dimensional
$15$-vertex torus permcycle {\bf 1~2~4~8} and {\em (b)} to respect $\tau$ and $\rho$ from 
Equations~(\ref{eq:tau}) and (\ref{eq:rho}), then there are exactly four combinatorial types 
of triangulations of $S^5$ satisfying these requirements (here and in the following we also make frequent reference to automorphism $\sigma$ from Equation~(\ref{eq:sigma})):

\begin{enumerate}
 \item Triangulation $^5 15^7_3$ in \cite{KoLu} with $170$ tetrahedra and symmetry $D_5 \times S_3$ of order $60$. The additional symmetry is $\sigma^2 (^5 15^7_3) =\  ^5 15^7_3$. Each of $^5 15^7_3$ and $\sigma(^5 15^7_3)$ contain two of the four multiples of {\bf 1~1~1~4~8}. Its Hopf decomposition is defined by handlebodies $A_i$, $1 \leq i \leq 3$, with the following generating simplices under $\mathbb{Z}_{15}$-symmetry $x \mapsto (x+1) \mod 15$.
{\small \[ \begin{array}{llll}
( 0\ 1\ 2\ 3\ 4\ 5),& (0\ 1\ 2\ 3\ 5\ 13),& (0\ 1\ 2\ 4\ 5\ 13),& (0\ 1\ 2\ 4\ 12\ 13)\\
( 0\ 1\ 3\ 4\ 7\ 8),& (0\ 1\ 3\ 4\ 8\ 11),& (0\ 1\ 3\ 4\ 11\ 12),& (0\ 1\ 4\ 5\ 8\ 12)\\
(0\ 1\ 3\ 5\ 7\ 8),& (0\ 1\ 3\ 5\ 8\ 13),& (0\ 1\ 3\ 8\ 11\ 13),& (0\ 2\ 5\ 7\ 10\ 12)
\end{array}\]}
 \item Triangulation $^5 15^2_5$ in \cite{KoLu} with $200$ tetrahedra and dihedral symmetry. Each of $\sigma^i(^5 15^2_5)$, $0 \leq i \leq 3$, contain three of the four multiples of {\bf 1~1~1~4~8}. Its decomposition into handlebodies is described in detail below.
 \item Triangulation $^5 15^2_2$ in \cite{KoLu} with $225$ tetrahedra and dihedral symmetry. Each of of $\sigma^i(^5 15^2_5)$, $0 \leq i \leq 3$, contain two of the four multiples of {\bf 1~1~1~4~8}. Its defining partition into handlebodies $A_i$, $1\leq i \leq 3$, is given by the following generators.
{\small \[ \begin{array}{llllll}
(0\ 1\ 2\ 3\ 4\ 5),& (0\ 1\ 2\ 3\ 5\ 6),& (0\ 1\ 2\ 3\ 6\ 7),& (0\ 1\ 2\ 3\ 11\ 12),& (0\ 1\ 2\ 3\ 12\ 13),& (0\ 1\ 3\ 4\ 6\ 7) \\
(0\ 1\ 2\ 3\ 7\ 8),& (0\ 1\ 2\ 3\ 8\ 10),& (0\ 1\ 2\ 3\ 10\ 11),& (0\ 1\ 2\ 7\ 8\ 9),& (0\ 1\ 3\ 8\ 10\ 11)&\\
(0\ 1\ 3\ 4\ 7\ 8),& (0\ 1\ 3\ 4\ 8\ 11),& (0\ 1\ 3\ 4\ 11\ 12),& (0\ 1\ 4\ 5\ 8\ 12)&&
\end{array}\]}
 \item Triangulation $^5 15^7_1$ in \cite{KoLu} with $230$ tetrahedra and symmetry $D_5 \times S_3$. Again, the additional symmetry is $\sigma^2 (^5 15^7_1) =\ ^5 15^7_1$. Each of of $\sigma^i(^5 15^2_5)$, $0 \leq i \leq 1$, contain two of the four multiples of {\bf 1~1~1~4~8}. Its partition into three handlebodies $A_i$, $1 \leq i \leq 3$, is given by the following generators.
{\small \[ \begin{array}{llllll}
(0\ 1\ 2\ 3\ 4\ 5),& (0\ 1\ 2\ 3\ 5\ 6),& (0\ 1\ 2\ 3\ 6\ 7),& (0\ 1\ 2\ 3\ 11\ 12),& (0\ 1\ 2\ 3\ 12\ 13),& (0\ 1\ 3\ 4\ 6\ 7) \\
(0\ 1\ 3\ 4\ 7\ 12),& (0\ 1\ 3\ 7\ 11\ 12),& (0\ 1\ 4\ 5\ 8\ 11),& (0\ 1\ 4\ 5\ 8\ 12),& (0\ 1\ 3\ 5\ 9\ 12),& (0\ 1\ 4\ 5\ 9\ 13)\\
(0\ 1\ 2\ 3\ 7\ 11),& (0\ 1\ 2\ 6\ 7\ 11),& (0\ 1\ 2\ 6\ 10\ 11),& (0\ 1\ 5\ 6\ 10\ 11)&&
\end{array}\]}
\end{enumerate}

Triangulations $3$ and $4$ are polytopal, with combinatorial types the tri-cyclic polytopes $\operatorname{3C}(1,2,4;15)$ and 
$\operatorname{3C}(1,3,4;15)$ respectively. All four Hopf triangulations have a triangulated boundary of the $3$-cube as their fixed point set of $\rho$ (complex conjugation).

Of the four triangulations, only $^5 15^2_5$ has orbit size four under $\sigma$ {\em and} contains three of the four multiples of {\bf 1~1~1~4~8} as $4$-dimensional subsets of its Hopf decomposition. 
All other spheres contain two $4$-dimensional solid tori isomorphic to {\bf 1~1~1~4~8}, as well as one of a different type %, namely the one generated by $\mathbb{Z}_{15}$-orbits 
invariant under $\sigma^2$.

Since Triangulation $2$ ($^5 15^2_5$) is the only Hopf triangulation in the above list honouring the setup of central torus and $4$-dimensional subsets described earlier in this section, we focus on the construction of $^5 15^2_5$ as a Hopf triangulation.

\begin{pro}
\label{prop:s5}

There exists a $15$-vertex Hopf triangulation of the $5$-sphere $S^5_{15}$ that contains as $3$-dimensional subset $A_{123}$
the unique $15$-vertex $3$-torus given by permcycle {\bf 1~2~4~8}, and as $4$-dimensional subsets $A_{12}$, $A_{13}$, and $A_{23}$ the solid tori given by the permcycle {\bf 1~1~1~4~8}
and its images under $\sigma$ and $\sigma^2$.
$S^5_{15}$ is equal to $\sigma(^5 15^2_5)$ from \cite{KoLu}.
\end{pro}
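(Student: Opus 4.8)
The plan is to verify the claim essentially by direct but carefully organised computation, exploiting the large symmetry group to keep the bookkeeping manageable. The statement is really three assertions bundled together: (i) the listed data $\sigma({}^515^2_5)$ is a combinatorial triangulation of $S^5$; (ii) the three prescribed subsets $A_{12}, A_{13}, A_{23}$ are the solid tori {\bf 1~1~1~4~8}, $\sigma(\textbf{1~1~1~4~8})$, $\sigma^2(\textbf{1~1~1~4~8})$; and (iii) these, together with the handlebodies $A_1,A_2,A_3$, realise the Hopf decomposition, in particular with intersection $A_{123}$ equal to the permcycle {\bf 1~2~4~8}. Since the triangulation $^515^2_5$ and its $\sigma$-image are already known (from \cite{KoLu}) to be triangulations of $S^5$, step (i) can largely be cited; what remains is to exhibit the explicit handlebody decomposition and check it has all the required combinatorial features.

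\textbf{Step 1: Pin down the three handlebodies.} First I would record explicit $\mathbb{Z}_{15}$-orbit generators for $A_1, A_2, A_3$ inside $\sigma({}^515^2_5)$ — the analogue of the generator tables given for Triangulations $1$, $3$, $4$ above. Because the whole configuration is $\tau$- and $\rho$-invariant and $\sigma$ cyclically permutes the three coordinate directions, it suffices to give generators for one $A_i$, say $A_1$, and obtain $A_2 = \sigma(A_1)$, $A_3 = \sigma^2(A_1)$; one then checks $\sigma^3 = \mathrm{id}$ on $\mathbb{Z}_{15}$ since $2^3 \equiv 1 \pmod{7}$... more precisely $2^4 = 16 \equiv 1 \pmod{15}$, so I must be careful that the relevant cyclic action has the right order, and that $\sigma$ genuinely has order dividing the size of the stated orbit. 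Concretely I would list the $6$-simplices' generators, verify each $A_i$ is a pure $5$-dimensional subcomplex, and confirm $A_1 \cup A_2 \cup A_3$ is the full sphere (a facet count: the sphere has a known number of facets, and inclusion–exclusion against the pairwise and triple intersections must reproduce it).

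\textbf{Step 2: Identify the intersections.} Then I would compute $A_{ij} = A_i \cap A_j$ simplex-by-simplex (again reducing to one representative, say $A_{12}$, via $\sigma$) and check that the resulting subcomplex coincides, as a labelled simplicial complex, with the permcycle {\bf 1~1~1~4~8} — this is a finite matching of orbit generators. Likewise $A_{123} = A_1 \cap A_2 \cap A_3$ must equal the permcycle {\bf 1~2~4~8}; here I can use Example~\ref{example:3torus} and Proposition~\ref{k-tori}, which already tell us {\bf 1~2~4~8} is the boundary of {\bf 1~1~1~4~8}, so once Step 1 and the $A_{ij}$ identification are in place the triple intersection is forced. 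Finally, to upgrade ``subcomplex with the right generators'' to ``Hopf triangulation'' in the sense of Definition~\ref{def:hopf}, I would invoke Proposition~\ref{k-tori} for the PL type of the pieces ($A_{ij} \cong S^1 \times S^1 \times B^2$, $A_{123} \cong T^3$) and check that each $A_i$ is a PL solid torus $S^1 \times B^4$ — for instance by exhibiting a collapse of $A_i$ onto its core Hopf circle, or by recognising $A_i$ as a regular neighbourhood of the $\mathbb{Z}_{15}$-orbit of a single vertex (a ``vertex star'' in the $3$-torus direction).

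\textbf{The main obstacle} I expect is Step 1: there is no shortcut to writing down a correct, complete handlebody decomposition of $\sigma({}^515^2_5)$ and proving the three $A_i$ cover the sphere with the prescribed pairwise intersections — this is a genuine (if finite) verification, best done with the orbit-generator tables and, realistically, a computer check in the background, presented in the paper as an explicit list the reader can re-verify. A secondary subtlety is making sure the $\sigma$-action is used consistently: one must confirm that $\sigma$ actually maps the chosen $A_1$ to a subcomplex of the \emph{same} triangulation $\sigma({}^515^2_5)$ (not of ${}^515^2_5$), which is exactly the point of passing from ${}^515^2_5$ to its $\sigma$-image, and that the orbit of {\bf 1~1~1~4~8} under $\sigma$ has the three members claimed rather than collapsing or producing a fourth type.
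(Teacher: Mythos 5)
Your Step~1 reduction is the crux of the proposal, and it does not work. You propose to derive $A_2 = \sigma(A_1)$ and $A_3 = \sigma^2(A_1)$, reasoning that $\sigma$ ``cyclically permutes the three coordinate directions.'' But that is not how $\sigma$ interacts with this Hopf triangulation: $S^5_{15}$ is the link of a single outer vertex $p_0$ in the (attempted) equilibrium $\mathbb{C}P^3$, and $\sigma$ shifts the \emph{four} homogeneous coordinate directions of $\mathbb{C}P^3$, so it carries $\operatorname{lk}(p_0)$ to $\operatorname{lk}(p_1)$, not to itself. Moreover $\sigma$ has order $4$ on $\mathbb{Z}_{15}$ (you note $2^4 \equiv 1 \pmod{15}$ yourself), so it cannot cyclically permute three subsets: a permutation of $\{A_1,A_2,A_3\}$ of order dividing $4$ is the identity or a transposition. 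The relations that actually hold are $A_1 = \sigma(A_3)$, while $A_2$ is a genuinely different solid $5$-torus (the permcycle $\textbf{1~1~1~4~4~4}$), invariant under $\sigma^2$ but \emph{not} equal to $\sigma^2(A_3)$; indeed $S^5_{15}$ is $\tau$- and $\rho$-invariant but not $\sigma$-invariant, which is exactly why no perfect equilibrium $\mathbb{C}P^3$ exists. It is only the three $4$-dimensional intersections $A_{12}, A_{13}, A_{23}$ that form a $\sigma$-orbit, as the proposition states; the three $5$-dimensional pieces do not.

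The paper's proof runs in the opposite direction and builds $S^5_{15}$ bottom-up rather than verifying it top-down. It starts from the three $4$-dimensional solid tori (the $\sigma$-orbit of $\textbf{1~1~1~4~8}$), declares these to be $A_{23}$, $A_{31}$, $A_{12}$, observes that each pairwise union such as $A_{23}\cup A_{31}$ is a $3$-sphere, fills each $3$-sphere in with a solid $5$-torus $A_i$, and gives explicit $\mathbb{Z}_{15}$-orbit generators separately for $A_3$, for $A_1 = \sigma(A_3)$, and for $A_2$. The union is the sphere, and identifying it with $\sigma({}^5 15^2_5)$ is then a routine relabelling check. Your Step~2 and the appeal to Proposition~\ref{k-tori} are sensible once correct generators are in hand, but as written they rest on a Step~1 reduction that is false and would not produce the correct complex.
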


\begin{proof}
We start with three $4$-dimensional solid tori of type $B^2 \times \ S^1 \times S^1$ given by permcycle {\bf 1~1~1~4~8} and its images under $\sigma$ and $\sigma^2$, namely {\bf 2~2~2~8~1} and {\bf 4~4~4~1~2} (multiples by $2$ and $4$ modulo $15$). We set these as subsets $A_{32}$, $A_{31}$, and $A_{12}$ of our Hopf triangulation of $S^5$ respectively (cf. Definition~\ref{def:hopf}).

The triangulated 
$A_{23}$ consists of the $\mathbb{Z}_{15}$-orbits of the 
following four 4-simplices
\[(0 \ 1 \ 2 \ 3 \ 7), (0 \ 1 \ 2 \ 6 \ 7), 
(0 \ 1 \ 5 \ 6 \ 7), (0 \ 4 \ 5 \ 6 \ 7),\]
$A_{31} = 2 \cdot A_{23}$ is generated by the four 4-simplices
\[(0 \ 1 \ 3 \ 5 \ 7), (0 \ 2 \ 3 \ 5 \ 7), 
(0 \ 2 \ 4 \ 5 \ 7), (0 \ 2 \ 4 \ 6 \ 7),\]
and $A_{12}$ is generated by
\[(0 \ 1 \ 3 \ 7 \ 11), (0 \ 2 \ 3 \ 7 \ 11), 
(0 \ 1 \ 5 \ 7 \ 11), (0 \ 1 \ 5 \ 9 \ 11).\]

They all share their common boundary -- $3$-dimensional torus given by permcycle {\bf 1~2~4~8}, subset $A_{123}$ -- and nothing else.

The union of 
$A_{23}$ and $A_{31} = 2 \cdot A_{23}$ is the boundary 
of the solid 5-torus $A_{3}$ of
type $B^4 \times S^1$, 
that is given in the left column below. The boundaries of $A_{1}$ and $A_{2}$ are generated analogously. 

Finally the triangulated 5-sphere $S^5_{15}$ consists of the union of 
the triangulated $A_{3}$, $A_{1}$, and $A_{2}$ with the generating
5-simplices in columns (see also \cite{BK}):
\begin{eqnarray*}
(0 \ 1 \ 2 \ 3 \ 5 \ 7)&(0 \ 1 \ 3 \ 5 \ 7 \ 11)&(0 \ 1 \ 2 \ 3 \ 7 \ 11)\\
(0 \ 2 \ 4 \ 5 \ 6 \ 7)&(0 \ 2 \ 4 \ 6 \ 7 \ 11)&(0 \ 1 \ 2 \ 6 \ 10  \ 11)\\
(0 \ 1 \ 2 \ 5 \ 6 \ 7)&(0 \ 2 \ 4 \ 5 \ 7 \ 9)&(0 \ 1 \ 5  \ 9  \ 10 \ 11)\\
(0 \ 1 \ 2 \ 4 \ 5 \ 6)&(0 \ 2 \ 4 \ 7 \ 9 \ 11)&(0 \ 1 \ 5 \ 6 \ 10 \ 11)\\
(0 \ 1 \ 2 \ 3 \ 4 \ 5)&(0 \ 2 \ 4 \ 6  \ 8 \ 10)&  
\end{eqnarray*}
The last column corresponding to $A_{2}$ can be interpreted
as the permcycle {\bf 1~1~1~4~4~4}, but this is not possible
for the others.
This triangulation is invariant under $\tau$ and $\rho$ but
not under $\sigma$.
Instead, the second column is the image of the first one under $\sigma$.
As a consequence, the second column is contained in the intersection of
$S^5_{15}$ and $\sigma(S^5_{15})$ whereas the third column is contained
in the intersection of $S^5_{15}$ and $\sigma^2(S^5_{15})$.

The claim $S^5_{15} = \sigma(^5 15^2_5)$ is straightforward to check, since both this article as
well as \cite{KoLu} work with equally labelled dihedral groups.
\end{proof}

As already pointed out above, this Hopf triangulation is unique under the assumption
of the automorphisms $\tau$, and $\rho$, and the $4$-dimensional handlebodies.

\medskip

\noindent
{\bf A Hopf triangulation of $S^7$ and candidates for arbitrary odd dimension.}
\label{sec:sevensphere}

A Hopf triangulation of the $7$-sphere must contain a central $4$-torus. As above, a canonical candidate for this $4$-torus is the one given by the permcycle ${\bf 1\ 2\ 4\ 8\ 16 }$ -- with the possible minimum of $31$ vertices. Even if we restrict our search to triangulations respecting $\tau$ and $\rho$ (cf. Equations~(\ref{eq:tau}) and (\ref{eq:rho})), a complete classification is computationally not feasible.

Instead, we extrapolate our findings from odd dimensions less than seven to find a suitable candidate. For this, we define the {\em $k$-cyclic polytope} of type $\operatorname{kC}(1, 2, \ldots , 2^{k-1}; n =2^{k+1}-1)$ to be the convex hull of
\[ (e^{2 \pi j/n}, e^{4\pi j/n}, e^{8\pi j/n}, \ldots , e^{2^{k+1}\pi j/n}  ) \in \mathbb{C}^k,  \qquad 0 \leq j \leq n-1.\]
The family of $k$-cyclic polytopes is described in \cite{BK,Sm}, and a facet list for arbitrary parameters can conveniently be constructed in Polymake \cite{polymake} using function \texttt{k\_cyclic()}.

Note that the boundary of the cyclic polytope $\operatorname{1C}(1; 3)$, a triangle, is trivially a Hopf triangulation of $S^1$. The boundary complex of the cyclic polytope $C(4,7) = \operatorname{2C}(1,2; 7)$ is a Hopf triangulation of the $3$-sphere, cf. Examples~\ref{CP2} and \ref{exa:bicyclic}. The boundary of the tri-cyclic polytope $\operatorname{3C}(1,2,4; 15)$ occurs in the list of Hopf triangulations of the $5$-spheres above as $^5 15^2_2$ from \cite{KoLu}. Naturally, this observation leads to the following conjecture.

\begin{conj}
  \label{conj:kcyclic}
  The $n$-vertex boundary complex of the $k$-cyclic polytope $\operatorname{kC}(1, 2, \ldots , 2^{k-1}; n)$, where $n =2^{k+1}-1$, is a Hopf triangulation of the $(2k-1)$-dimensional sphere.
\end{conj}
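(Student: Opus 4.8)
The plan is to confirm the conjecture in three stages: first make the combinatorics of $\partial P_k$, where $P_k := \operatorname{kC}(1, 2, \ldots, 2^{k-1}; n)$ with $n = 2^{k+1}-1$, completely explicit together with its symmetries; then single out candidate subcomplexes $A_I$ for the pieces of the Hopf decomposition and check that they fit together as a Boolean algebra; and finally show that each $A_I$ has the PL homeomorphism type of the corresponding Hopf stratum, so that the abstract decomposition $(A_I)$ is ambient-PL-equivalent to the genuine Hopf decomposition of $S^{2k-1}$. Throughout I use that the $n$ vertices $\gamma(0), \ldots, \gamma(n-1)$ of $P_k$ all lie on the central torus $\{|z_1| = \cdots = |z_k|\}$ of the (suitably scaled) unit sphere in $\mathbb{C}^k$, and that doubling the coordinate-frequency vector $(1, 2, 4, \ldots, 2^{k-1})$ modulo $n$ produces the distinct vectors $(2, 4, \ldots, 2^k)$, $(4, 8, \ldots, 2^k, 1)$, and so on --- a $(k+1)$-cycle, since $2 \cdot 2^k = 2^{k+1} \equiv 1 \pmod n$.

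For the first stage, the rotation $z \mapsto (e^{2\pi\mathrm{i}/n} z_1, e^{4\pi\mathrm{i}/n} z_2, \ldots, e^{2^k\pi\mathrm{i}/n} z_k)$ and complex conjugation are isometries of $P_k$ that induce the automorphisms $\tau$ and $\rho$ of Equations~(\ref{eq:tau}) and (\ref{eq:rho}), so $\partial P_k$ carries a dihedral $D_n$-symmetry whose cyclic part $\langle\tau\rangle$ acts freely on the vertices; the doubling map $\sigma$ of Equation~(\ref{eq:sigma}) is \emph{not} a symmetry of $P_k$ itself, but it identifies $\partial P_k$, as an abstract simplicial complex, with the boundary complexes of the sibling $k$-cyclic polytopes $\operatorname{kC}(2, 4, \ldots, 2^k; n)$, etc. That $\partial P_k$ is a simplicial --- hence polytopal --- $(2k-1)$-sphere, and a Gale-evenness-type description of its facets, are part of the theory of $k$-cyclic polytopes \cite{BK,Sm} (and are, in any case, decidable, e.g.\ via \texttt{k\_cyclic()} in \cite{polymake}). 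I would first write this facet criterion down in closed form, so that every subsequent assertion reduces to a check over the finitely many $D_n$-orbits of faces.

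For the second stage, every facet $F$ of $P_k$ has a well-defined outward normal $\nu_F \in S^{2k-1} \subset \mathbb{C}^k$; for $I \subseteq \{1, \ldots, k\}$ let $A_I$ be the subcomplex generated by all facets $F$ with $|(\nu_F)_i| \ge |(\nu_F)_j|$ for every $i \in I$ and every $j$, fixing a convention to break ties (so $A_\emptyset = \partial P_k$). Then $\bigcup_i A_{\{i\}} = \partial P_k$ because the Hopf cells cover $S^{2k-1}$, and $A_J \supseteq A_I$ for $J \subseteq I$ is immediate; the substantive identities $A_I = \bigcap_{i \in I} A_{\{i\}}$ and $\partial A_I = \bigcup_{i \notin I} A_{I \cup \{i\}}$ I would extract from the explicit facet list of stage one, reduced modulo $D_n$ to finitely many orbit representatives. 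I expect the central stratum $A_{\{1, \ldots, k\}}$ to be exactly the $n$-vertex permcycle ${\bf 1\ 2\ 4\ \ldots\ 2^k}$ triangulation of the $k$-torus, and each stratum $A_I$ with $|I| = k-1$ to be a triangulated solid $(k+1)$-torus --- for at least one $I$ literally the permcycle ${\bf 1\ 1\ 1\ 4\ \ldots\ 2^k}$ of Proposition~\ref{k-tori} --- which anchors an induction and forces the $A_I$ to form the Boolean algebra described in the Remark after Definition~\ref{def:hopf}.

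The third stage is the heart of the matter: one must show that each $A_I$ is PL homeomorphic to $(S^1)^{|I|} \times B^{2(k-|I|)}$, and that the family $(A_I)$ is a ``normal-crossing'' configuration of solid tori of the same combinatorial shape as the corner decomposition of $\partial\big((D^2)^k\big)$; a recognition argument --- building an ambient PL homeomorphism by induction outward from the central torus $A_{\{1, \ldots, k\}}$ (handled by Proposition~\ref{k-tori}), glueing on the standard solid-torus neighbourhood of each successive stratum --- then identifies $(A_I)$ with the Hopf decomposition. Besides the combinatorial identities already listed, the input needed is that each $A_I$ is, as a PL manifold with boundary, the \emph{standard} $(S^1)^{|I|} \times B^{2(k-|I|)}$; that $A_I$ at least has the homotopy type of $(S^1)^{|I|}$ should follow from the free $\mathbb{Z}_n$-actions together with the known link structure. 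Upgrading ``correct homotopy type'' to ``standard PL type'' uniformly in $k$ is the obstacle I foresee as decisive: for small $k$ this is a finite verification (the cases $k \le 3$ appear above), and several of the $A_I$ are explicit permcycles (such as ${\bf 1\ 1\ 1\ 4\ \ldots\ 2^k}$, or ${\bf 1\ 1\ 1\ 4\ 4\ 4}$ in the proof of Proposition~\ref{prop:s5}), but not all of them are, and for general $k$ one would need either a clean inductive description of the vertex links of $P_k$ as lower-dimensional $k$-cyclic-type polytopes, or a direct shellability or collapsibility argument for the individual $A_I$ --- and I do not see either of these coming for free.
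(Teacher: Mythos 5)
The statement is a conjecture, and the paper does not prove it; it only verifies it computationally for $k\le4$ (the $k=4$ verification is Appendix~\ref{app:s7}, obtained by the heuristic procedure described immediately after Conjecture~\ref{conj:kcyclic}). Your proposal is therefore a strategy, not a proof, and to your credit you say so: you flag Stage~3 --- showing each $A_I$ has the \emph{standard} PL type $(S^1)^{|I|}\times B^{2(k-|I|)}$ uniformly in $k$ --- as the decisive obstacle, and this is indeed exactly where the authors themselves stop and fall back on machine verification (discrete Morse theory in step~(7) of their procedure).

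Stage~2 needs more care than you allow. The paper assigns faces to the $A_i$ by examining barycenters of cyclic orbit representatives (of all dimensions, not only facets) with a tunable threshold $p>1$, followed by an iterative search over borderline orbits until closedness, Dehn--Sommerville, and link conditions hold; the authors remark that the resulting decomposition is ``most likely very far from unique.'' Your closed-form criterion $|(\nu_F)_i|\ge|(\nu_F)_j|$ on facet normals cannot produce the lower-dimensional strata directly: for $|I|\ge2$ it is a measure-zero condition on normals, so one must fall back on $A_I=\bigcap_{i\in I}A_{\{i\}}$, and it is then far from automatic that this intersection is a manifold of the expected dimension, let alone the expected solid torus --- the paper's threshold-plus-search procedure is precisely the repair step you have suppressed. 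Worse, the decomposition found for $k=4$ has the $A_i$ of \emph{distinct} orbit sizes ($127,100,85,41$), so the pieces are not permuted by $\sigma$ (this is the reason, noted in the remark after Theorem~\ref{thm:noperfectcp3}, that this Hopf triangulation of $S^7$ cannot seed a perfect equilibrium $\mathbb{C}P^4$). That fact rules out propagating the Boolean-algebra identities or the PL types of intermediate strata by $\sigma$-equivariance, and it also means not all of your $(k-1)$-strata can be $\sigma$-images of the permcycle ${\bf 1\ 1\ 1\ 4\ \cdots\ 2^k}$ as you anticipate. Your Stage~1 is sound, and your observation that $\sigma$ is not a symmetry of $P_k$ but instead identifies $\partial P_k$ with the sibling polytopes $\operatorname{kC}(2,4,\ldots,2^k;n)$, etc., is correct and is the right way to think about why this is subtle.

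In short: a sensible road map that parallels the paper's case-by-case verification, but Stages~2 and 3 both contain genuine gaps that you and the authors alike leave open; the conjecture remains unproved.
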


In Appendix~\ref{app:s7}, we give the decomposition of the boundary complex of $\operatorname{4C}(1,2,4,8 ; 31)$ into four subsets $A_1$, $A_2$, $A_3$, and $A_4$ that define a Hopf decomposition of $S^7$. That is, we show that Conjecture~\ref{conj:kcyclic} is true not just for $k=1,2,3$, but also for $k=4$.

For this, note that the boundary complex of $\operatorname{4C(1,2,4,8 ; 31)}$ decomposes into $353$ orbits under cyclic $\mathbb{Z}_{31}$-symmetry. The entire boundary complex has face-vector 
\[(31, 465, 4340, 21793, 54188, 69130, 43772, 10943 = 31 \cdot 353).\]
The $353$ cyclic orbits partition into $127$ orbits for $A_1$, $100$ orbits for $A_2$, $85$ orbits for $A_3$, and $41$ orbits for $A_4$, each homeomorphic to $B^6 \times S^1$, where $B^6$ can be thought of as the filled-in unit circle in three of the four complex coordinate directions, while the $S^1$ factor is the boundary of the unit circle in the remaining coordinate direction. Note that this decomposition is most like very far from unique.  

Using simpcomp \cite{simpcomp}, or any other software package to handle triangulations of manifolds, it can be checked that the $A_i$, $1\leq i \leq 4$, define a Hopf decomposition of $S^7$, with the common intersection being the central $4$-torus given by permcycle ${\bf 1\ 2\ 4\ 8\ 16 }$.

For finding the precise decomposition into handlebodies $A_i$, $1\leq i \leq 4$, we use the following procedure 
(note that a brute force calculation to find this decomposition is far out of reach computationally).

\begin{enumerate}
  \item For each cyclic orbit representative, compute the coordinates of its barycenters (this is an invariant of the orbit)
  \item For $A_i$ we are looking for a solid torus filling in unit circles in all but the $i$-th complex coordinate direction.
    Hence, we are looking for orbit representatives with barycenters far away from $0$ in absolute value in the $i$-th coordinate direction, 
    but close to $0$ in absolute value in all other coordinate directions
  \item Let $c$ be an orbit representative with barycenters $(b_1,b_2,b_3,b_4) \in \mathbb{C}^4$. Define the {\em rank of $c$} 
    as $\operatorname{rk}_i(c) = \min_{k \neq i} ||b_i|| / ||b_k||$. A value of $\operatorname{rk}_i(c) > 1$ means 
    that the barycenters is further away from $0$ in the $i$-th coordinate than in any other coordinate direction
  \item For a threshold $p > 1$, assign all orbit representatives $c$ with $\operatorname{rk}_i(c) > p$ to $A_i$
  \item Collect all orbit representatives $c$ with $1/p < \operatorname{rk}(c) \leq p$
  \item For every coordinate direction, iteratively combine the orbits from (4) with all subsets of orbits from (5), and check 
    every complex for closedness, Dehn Sommerville equations, and finally its vertex link -- until a bounded manifold is found
  \item Verify that the bounded manifold is of type $S^1 \times B^6$ (using, for instance, discrete Morse theory)
\end{enumerate}

With the correct parameter for $p$ in steps (4) and (5) the above strategy produces a valid partition of the orbits into handlebodies 
$A_i$, $1 \leq i \leq 4$, fairly quickly.
See also Section~\ref{sec:problems} where we discuss open problems related to Conjecture~\ref{conj:kcyclic}.

\section{Equilibrium triangulations of $\mathbb{C}P^k$}
\label{sec:equilcomplex}

First, recall that a (combinatorial) triangulation
of $\mathbb{C}P^k$ requires at least $(k+1)^2$ vertices, and that
for $k \geq 3$ this bound cannot be attained \cite{AM, MY}.
The idea for the proof of this statement is a simple recursion formula: 
$\mathbb{C}P^1$ requires $4$ vertices, and a triangulation
of $\mathbb{C}P^{k+1}$ 
requires a $(2k+2)$-simplex with $2k+3$ vertices
opposite to a triangulated $\mathbb{C}P^k$. Hence, the minimum number $n_k$ of
vertices satisfies the recursion formula
$n_{k+1} = n_k + 2k + 3$, also satisfied by $n_k = (k+1)^2$. 

\medskip

As usual, we describe points in $\mathbb{C}P^k$ in homogeneous
coordinates $[z_0,\ldots,z_k]$. We can decompose the space into $k+1$ 
topological balls $B_0,B_1,\ldots, B_k$ of real dimension $2k$ by defining
\[B_i = \{[z_0,\ldots,z_k] \in \mathbb{C}P^k \ | \ |z_i| \geq  |z_j| \mbox{ for all } j \}.\]
We can think of $p_0 = [1,0,\ldots,0]$ as the centre of $B_0$ and so on.
In particular, $B_0, B_1, \ldots, B_k$ appear as "zones of influence"
of the points $p_0, p_1, \ldots, p_k$.
These points $p_0, \ldots, p_k$ will be vertices of
the equilibrium triangulation. All other vertices will lie in
the central $k$-torus defined by
\[T^k = \{[z_0,\ldots,z_k] \in \mathbb{C}P^k \ | \ |z_0| = |z_1| = \cdots =  |z_k|\} \cong (S^1)^{k+1}/U(1).\]
The various intersections $B_{ij} = B_i \cap B_j, B_{ijk} = B_i \cap B_j \cap B_k$ 
up to $T^k = B_{0123 \ldots k} = B_0 \cap B_1 \cap \cdots \cap B_k$
form -- as in the case of the Hopf decomposition of a sphere --
a Boolean algebra which is (anti-)isomorphic with the
powerset of $\{0,1,2,\ldots, k\}$. The restriction to one of
the $2k$-balls coincides with the structure of a Hopf decomposition
of the boundary of that ball:
\[A_i = B_{0i}, \,\,\,\, A_{ij} = B_{0ij}, \,\,\,\,A_{ijl} = B_{0ijl}, \,\,\,\, \mbox{etc.}\] 

\medskip
Note that the boundary $\partial B_i$ of each of the $B_i$ is a $(2k-1)$-sphere.
Hence, the various parts in the decomposition of $\mathbb{C}P^k$ carry
one more index than in the corresponding Hopf decomposition of $\partial B_i$.
In particular, the central torus in $\mathbb{C}P^k$ has
$k+1$ subscript indices, while the one in the $(2k-1)$-sphere has only $k$ indices.

\begin{defi} ((Perfect) equilibrium triangulation of $\mathbb{C}P^k$)
  
\label{def:equilibrium}
We call a combinatorial triangulation $C$ of $\mathbb{C}P^k$ 
an {\em equilibrium triangulation}, if all subsets $B_i, B_{ij}, B_{ijl},  
\ldots ,B_{0123 \ldots k}$ are PL homeomorphic with 
subcomplexes of $C$.

$C$ is called 
{\em perfect}, if its central torus is the $(2^{k+1} -1)$-vertex triangulation  
generated by permcycle ${\bf 1 \ 2 \ 4 \ \cdots 2^{k-1} \ 2^k}$,
$C$ has exactly one additional vertex $p_i$, per subset $B_i$, $0 \leq i \leq k$,
and the automorphisms $\tau$, $\rho$, and $\sigma$, as defined in 
Equations~\ref{eq:tau}-\ref{eq:sigma}, extend to $C$ with $\tau(p_i) = p_i$, 
$\rho(p_i) = p_i$, and $\sigma(p_i) = p_{(i+1) \mod (k+1)}$ for all $0 \leq i \leq k$.
\end{defi}

\begin{exa}
In the trivial case $k=1$, the central $1$-torus is the $1$-sphere
$\{[z_0,z_1] \in \mathbb{C}P^1 \ \big| \ |z_0| = |z_1|\}$
and can be triangulated by three vertices $0$, $1$, $2$ and
the orbit of the edge $(0 \ 1)$ under
the action of $\mathbb{Z}_3$. By adding the two centres $p_0$ and $p_1$
we obtain the double cone over a triangle. The triangulation is invariant under
the involution $x \mapsto -x \mod 3$ and under the cyclic shift
$(1 \ 2) (0) (p_0 \ p_1)$, and we have a perfect equilibrium triangulation 
of $\mathbb{C}P^1 \cong S^2$. See Figure~\ref{fig:CP1} for a picture.

\begin{figure}
  \includegraphics[height=5cm]{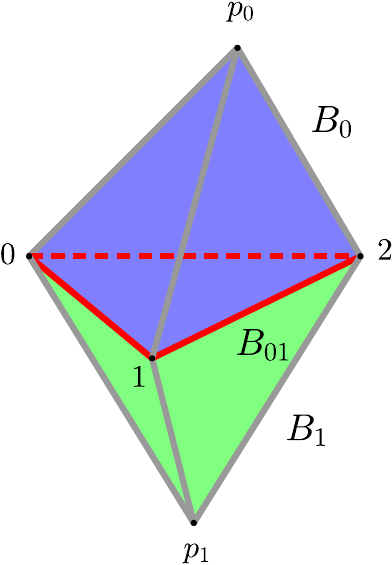}
  \caption{The perfect equilibrium triangulation of $\mathbb{C}P^1$. \label{fig:CP1}}
\end{figure}
\end{exa}

\begin{pro} \label{CP2-equilibrium}
There exists a perfect equilibrium triangulation of $\mathbb{C}P^2$.
\end{pro}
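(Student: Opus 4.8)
The plan is to assemble the triangulation from the building blocks supplied by Proposition~\ref{k-tori} and Example~\ref{CP2}, and then read off the three defining properties of a perfect equilibrium triangulation. I would take the central torus $B_{0123\ldots k}=B_{012}$ to be the unique $7$-vertex torus given by the permcycle ${\bf 1\ 2\ 4}$ on the vertex set $\mathbb{Z}_7$, and the three solid $3$-tori $S_0:={\bf 1\ 1\ 1\ 4}$, $S_1:=\sigma(S_0)$, $S_2:=\sigma^2(S_0)$ furnished by Proposition~\ref{k-tori}; since $2^3\equiv 1 \pmod 7$ these form a single $\sigma$-orbit of length three, each is a combinatorial solid torus $\cong S^1\times B^2$ with boundary $B_{012}$, and any two of them meet exactly in $B_{012}$. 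I would then set $B_{12}:=S_0$, $B_{02}:=S_1$, $B_{01}:=S_2$ (this particular labelling is forced by the action of $\sigma$, see below). By Example~\ref{CP2}, for each $i$ the union $\Sigma_i:=\bigcup_{j\neq i}B_{ij}$ of the two solid tori not carrying index $i$ is a combinatorial $3$-sphere -- precisely one of the three Hopf triangulations of $S^3$ associated with $\partial C(7,4)=\operatorname{2C}(1,2;7)$. Finally I would adjoin three new vertices $p_0,p_1,p_2$, set $B_i:=p_i\ast\Sigma_i$ (a combinatorial $4$-ball with $\partial B_i=\Sigma_i$), and let $C:=B_0\cup B_1\cup B_2$. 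Because the $p_i$ are new and pairwise distinct, $B_i\cap B_j=\Sigma_i\cap\Sigma_j=B_{ij}$ and $\bigcap_iB_i=B_{012}$, so $C$ carries all subcomplexes required by Definition~\ref{def:equilibrium} on the nose, it has exactly $7+3=10$ vertices (one $p_i$ per $B_i$ beyond the torus), and its central torus is the prescribed ${\bf 1\ 2\ 4}$ torus.

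The heart of the matter -- and the one genuinely nontrivial step -- is to show $|C|$ is PL-homeomorphic to $\mathbb{C}P^2$ in a way compatible with the zone decomposition. Here the argument I would give is that the combinatorial decomposition $C=B_0\cup B_1\cup B_2$ mirrors the geometric one $\mathbb{C}P^2=\{\,|z_0|\ \text{max}\,\}\cup\{\,|z_1|\ \text{max}\,\}\cup\{\,|z_2|\ \text{max}\,\}$. Concretely, $B_0\cup B_1$ deformation retracts onto the $2$-sphere obtained by coning a core circle $\gamma$ of $B_{01}$ from both $p_0$ and $p_1$ (the combinatorial stand-in for $\mathbb{C}P^1=\{z_2=0\}$), hence is a closed regular neighbourhood of an embedded $S^2$, i.e.\ a $D^2$-bundle over $S^2$; since its boundary $\partial(B_0\cup B_1)=\Sigma_2$ is, by Example~\ref{CP2}, a $3$-sphere, this bundle has Euler number $\pm 1$, so $B_0\cup B_1$ is PL-homeomorphic to $\mathbb{C}P^2$ with an open $4$-ball removed. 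Capping off along $\Sigma_2\cong S^3$ with the $4$-ball $B_2$ then produces $\mathbb{C}P^2$ (the gluing being essentially unique), and this simultaneously exhibits $C$ as a closed combinatorial $4$-manifold. Alternatively -- and this is probably the shortest route on paper -- one simply matches $C$ with the equilibrium triangulation of $\mathbb{C}P^2$ already recorded in \cite{BK} and quotes the identification from there. I expect this identification to be the main obstacle: everything else is either a direct consequence of Proposition~\ref{k-tori} and Example~\ref{CP2} or a finite local check (e.g.\ that the link of a torus vertex $v$, the union of the three $3$-balls $p_i\ast\operatorname{lk}(v,\Sigma_i)$ glued pairwise along the disks $\operatorname{lk}(v,B_{ij})$ and all three along the circle $\operatorname{lk}(v,B_{012})$, is an $S^3$).

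It remains to verify the automorphisms in Definition~\ref{def:equilibrium}. The maps $\tau$ and $\rho$ fix every $p_i$, and they preserve each $S_a$: the permcycles $S_a$ are $\mathbb{Z}_7$-orbit-closed, hence $\tau$-invariant, and $\rho$ is an invariance of ${\bf 1\ 1\ 1\ 4}$ (Proposition~\ref{k-tori}) that commutes with $\sigma$, so $\rho$ fixes $S_0$ and therefore also $S_1=\sigma S_0$ and $S_2=\sigma^2 S_0$; consequently $\tau$ and $\rho$ preserve each $\Sigma_i$ and each $B_i$. Finally $\sigma$, extended by $\sigma(p_i):=p_{(i+1)\bmod 3}$, is an automorphism of $C$: from $\sigma(S_0)=S_1$, $\sigma(S_1)=S_2$, $\sigma(S_2)=S_0$ and the labelling $B_{12}=S_0$, $B_{02}=S_1$, $B_{01}=S_2$ one reads off $\sigma(B_{ij})=B_{\sigma(i)\sigma(j)}$, hence $\sigma(\Sigma_i)=\Sigma_{(i+1)\bmod 3}$ and $\sigma(B_i)=p_{(i+1)\bmod 3}\ast\Sigma_{(i+1)\bmod 3}=B_{(i+1)\bmod 3}$, while on the central torus $\sigma$ acts as $x\mapsto 2x$, which is exactly the cyclic shift of the three coordinate directions. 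Together with the observations in the first paragraph (central torus of type ${\bf 1\ 2\ 4}$, one extra vertex $p_i$ per $B_i$), this establishes all conditions of Definition~\ref{def:equilibrium} and yields a perfect equilibrium triangulation of $\mathbb{C}P^2$.
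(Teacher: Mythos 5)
Your construction is the same as the paper's: the $7$-vertex torus ${\bf 1\ 2\ 4}$, the three solid tori in the $\sigma$-orbit of ${\bf 1\ 1\ 1\ 4}$ (equivalently the $\mathbb{Z}_7$-orbit of $(0\,1\,2\,3)$), the three $7$-vertex $3$-spheres $\Sigma_i \cong \partial C(4,7)$ obtained as pairwise unions, and the three cones $B_i = p_i * \Sigma_i$; and your verification of the $\tau,\rho,\sigma$-equivariance is exactly the ``small calculation'' the paper alludes to. Where you genuinely supplement the paper is in the identification of $|C|$ with $\mathbb{C}P^2$: the paper simply cites \cite{BK} for this, whereas you give a self-contained handle-theoretic argument (viewing $B_0\cup B_1$ as a $2$-handle attached to the $4$-ball $B_0$ along the solid torus $B_{01}\subset\Sigma_0$, reading off Euler number $\pm1$ from $\partial(B_0\cup B_1)=\Sigma_2\cong S^3$, and capping off with $B_2$). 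That argument is correct -- the attaching circle is the core of a genus-one Heegaard solid torus in $S^3$, hence unknotted, so the boundary $3$-manifold is a lens space $L(n,1)$ and $L(n,1)\cong S^3$ forces $n=\pm1$ -- and it buys you independence from \cite{BK}, at the cost of invoking standard $4$-manifold topology rather than keeping everything combinatorial. Both routes are sound; the paper's is shorter because it is willing to outsource.
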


The proof is given in \cite{BK}. The construction is based on
the Hopf triangulation of $S^3$ with $7$ vertices from Example~\ref{CP2}:
The central torus $T^2$ -- defined as the $\mathbb{Z}_7$-orbits 
of the two triangles $(0 \ 1 \ 3)$  and $(0 \ 2 \ 3)$ --
is triangulated with the minimum number of 7 vertices. 
This is invariant under the actions of $\tau$, $\rho$, and $\sigma$
generating a group of order 42, see Figure~\ref{fig:torus}.
The orbit of the tetrahedron $(0 \ 1 \ 2 \ 3)$ 
defines a solid torus with $7$ vertices which is not invariant under
$\sigma$ (multiplication by $2 \mod 7$). Since $2^3 = 1 \mod 7$ this defines
two other triangulated solid tori with the same boundary where the union of any
two of them is a triangulated $3$-sphere. In fact, it is the
boundary complex of the cyclic polytope $C(4,7)$ in three versions.
Finally, we take the cones from the three special vertices $p_0$, $p_1$, and $p_2$
to these three triangulated $3$-spheres. This defines three triangulated 
$4$-balls intersecting mutually in the three solid tori. The intersection 
of all three recovers the central $7$-vertex torus.
This equilibrium triangulation of $\mathbb{C}P^2$
has $10$ vertices and $42$ top-dimensional simplices.
A small calculation reveals that the extensions of $\tau$, $\rho$ 
(both with $p_0$, $p_1$, and $p_2$ as fixed points), and $\sigma$ (with \
cyclic shift $p_0 \mapsto p_1 \mapsto p_2 \mapsto p_0$) to the triangulation are in fact automorphisms
and hence it is perfect. 

Another very natural 10-vertex triangulation of $\mathbb{C}P^2$
was obtained by Bagchi and Datta \cite{BD1}.
A similar decomposition of a manifold into three 4-balls is also essential in \cite{Sch} -- a 
special case of what is called a {\it trisection} in the topology of 4-manifolds \cite{GK}.

\begin{rem}
  If we start with a different bi-cyclic polytope $\operatorname{2C}(1,m;n)$,
  $m>2, n = m^2 + m + 1$, as the Hopf triangulation of $S^3$,
  we obtain a different (non-perfect) equilibrium triangulation of $\mathbb{C}P^2$. The automorphism $\sigma$ in these triangulations is given by
  multiplication by $m \mod n$.
\end{rem}

The real part of the perfect triangulation of $\mathbb{C}P^2$ is the 7-vertex
triangulation of $\mathbb{R}P^2$ obtained by antipodal identification
of the boundary of the so-called {\it tetrakis hexahedron}:
an ordinary 3-cube where each square is subdivided by an extra vertex
at the centre, see \cite{BK} and Figure~\ref{fig:RPk}.

\medskip

Assuming that we have a suitable Hopf triangulation of $S^{2k-1}$ with
$2^{k+1} - 1$ vertices,
we can hope for a perfect equilibrium triangulation of $\mathbb{C}P^k$ by
coning it to $p_0$ and taking the union of all images under the permutation
\[\sigma \colon x \mapsto 2x \mod 2^{k+1} - 1,\]
extended to vertices $p_0, \ldots, p_k$ by the shift
$\sigma \colon p_0 \mapsto p_1 \mapsto \cdots \mapsto p_k \mapsto p_0$
of order $k+1$.

\medskip

\begin{theorem}[Main theorem on perfect equilibrium triangulations] \label{thm:main}

Assume that we have a Hopf triangulation $S$ of $S^{2k-1}$ with $2^{k+1}-1$
vertices containing as central $k$-torus the permcycle ${\bf 1 \ 2 \ 4 \ \cdots 2^{k-1} \ 2^k}$
together with $k$ of its $(k+1)$-dimensional solid torus fillings as described in Proposition~\ref{k-tori}.
Assume further that for each 
intermediate dimension the subsets representing solid tori 
and their images under $\sigma$ (when extended to the
entire Hopf triangulation of the sphere) are relatively disjoint, meaning
that they overlap only along their mutual boundaries.

\smallskip
Then we obtain a perfect equilibrium triangulation of
$\mathbb{C}P^k$ by adding a cone from the point $p_0 = [1, 0, 0, \ldots, 0]$ to 
the original Hopf triangulation and applying all powers
of $\sigma$ to this $2k$-ball, where $p_i$ is the $i^{th}$
cyclic shift of $p_0$ and $\sigma(p_i) = p_{(i+1) mod (k+1)}$ cyclically.
This equilibrium triangulation with $2^{k+1}+k$ vertices 
is defined as the union of these $k+1$
triangulated $2k$-balls, and they all intersect precisely
in the central $k$-torus.
\end{theorem}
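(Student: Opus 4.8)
The plan is to establish the four assertions bundled into the statement: that the construction yields a simplicial complex on $2^{k+1}+k$ vertices, that it is a closed combinatorial $2k$-manifold, that this manifold is PL homeomorphic to $\mathbb{C}P^k$, and that the zones of influence together with the claimed automorphisms behave as in Definition~\ref{def:equilibrium}. First I would fix notation. Write $n=2^{k+1}-1$, let $B_0=p_0*S$ be the cone over the given Hopf triangulation $S$, regard $\sigma$ as the permutation $x\mapsto 2x\bmod n$ of $\mathbb{Z}_n$ extended by $p_i\mapsto p_{(i+1)\bmod(k+1)}$, and put $B_i=\sigma^i(B_0)$ and $C=\bigcup_{i=0}^{k}B_i$. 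Since $2^{k+1}\equiv 1\pmod n$ the permutation $\sigma$ has order dividing $k+1$, so exactly the balls $B_0,\dots,B_k$ occur; each $B_i$ is a cone over a PL $(2k-1)$-sphere and hence a PL $2k$-ball, and the vertex set of $C$ is $\mathbb{Z}_n\sqcup\{p_0,\dots,p_k\}$, of size $2^{k+1}+k$. I will assume, as holds for all the Hopf triangulations constructed in Section~\ref{sec:hopf} and as is implicit in the hypothesis, that $S$ is itself invariant under $\tau$ and $\rho$.

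Next I would analyse how the $B_i$ overlap. Because no edge of $C$ joins two cone points, every facet of $C$ lies in exactly one $B_i$ (the one containing its cone point), and for $I\subseteq\{0,\dots,k\}$ the subcomplex $B_I:=\bigcap_{i\in I}B_i$ consists of the simplices avoiding every $p_j$ with $j\in I$. Since $S$ is a Hopf triangulation, its Hopf pieces $A_j=B_{0j}$, $A_{jl}=B_{0jl}$, \dots, and in particular the $(k+1)$-dimensional solid tori of Proposition~\ref{k-tori}, are subcomplexes, and each $B_I$ is, up to a power of $\sigma$, one such piece. The hypothesis that in every intermediate dimension the solid-torus pieces and their $\sigma$-images are relatively disjoint is precisely what forces $B_I\cap B_J=B_{I\cup J}$ for all $I,J$, so that $\{B_I\}$ is a Boolean algebra anti-isomorphic to the power set of $\{0,\dots,k\}$, with all $k+1$ balls meeting exactly in $B_{0\dots k}=A_{1\dots k}$, the permcycle triangulation of the central $k$-torus. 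This also yields the pseudomanifold property of $C$: a ridge through a cone point $p_i$ is $p_i*\rho'$ for a ridge $\rho'$ of the $i$-th copy of $S$ and lies in two facets, while a ridge $\rho$ avoiding all cone points is a facet of $S$, hence interior to a unique Hopf piece $A_j$, so it lies only in $B_0$ and $B_j$ and in the two facets $p_0*\rho$, $p_j*\rho$. Together with the observation that $\operatorname{lk}_C(p_i)=\sigma^i(S)$ is a combinatorial $(2k-1)$-sphere, that the link of a face through a cone point is computed inside a single ball, and that the link of a face of the torus part is again a fan of balls glued in the Boolean pattern one dimension down, one then checks that every vertex link of $C$ is a combinatorial $(2k-1)$-sphere, so $C$ is a closed combinatorial $2k$-manifold.

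The main obstacle is to identify the PL type of $C$ as $\mathbb{C}P^k$ itself, and not merely as some closed $2k$-manifold carrying the same decomposition. For this I would build a PL homeomorphism $C\to\mathbb{C}P^k$ sending each $B_I$ to the corresponding zone of influence $B_I^{\mathrm{geo}}$ of Section~\ref{sec:equilcomplex}, constructed inductively over the poset of pieces: one starts on the central $k$-torus, where on both sides the triangulation is the standard lattice triangulation of $T^k$ and $\tau,\rho,\sigma$ realise translation, central reflection and coordinate shift (Proposition~\ref{k-tori}), then extends the homeomorphism successively over each solid-torus piece and finally over each $2k$-ball $B_i=p_i*S$, using at every step that a PL homeomorphism of the boundary of a PL ball extends over the ball. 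The delicate point is that for the intermediate $(S^1)^m\times B^{2k-2m}$-pieces a given boundary identification extends over the piece only if it carries the correct ``framing''; this is exactly what is pinned down by insisting on the canonical permcycle fillings of Proposition~\ref{k-tori} and on compatibility with $\tau,\rho,\sigma$. For the values of $k$ for which a Hopf triangulation $S$ with the required properties is actually exhibited in this paper, one may alternatively confirm $C\cong_{\mathrm{PL}}\mathbb{C}P^k$ directly, for instance by reducing $C$ to a known triangulation of $\mathbb{C}P^k$ through bistellar flips.

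Finally I would verify the automorphisms. By construction $\sigma$ permutes $B_0,\dots,B_k$ cyclically, hence preserves $C$, with $\sigma(p_i)=p_{(i+1)\bmod(k+1)}$. Since $\rho$ commutes with $\sigma$ and preserves $S$ while fixing each $p_i$, and $\tau$ satisfies $\sigma^{-1}\tau\sigma=\tau^{2^k}$ and likewise preserves $S$ and each $p_i$, both $\tau$ and $\rho$ map every $B_i$ to itself and hence preserve $C$; being vertex bijections that preserve each member of a cover of $C$ by subcomplexes, they are simplicial automorphisms, with $\tau(p_i)=\rho(p_i)=p_i$. Combining this with the analysis of the $B_I$, $C$ is a perfect equilibrium triangulation of $\mathbb{C}P^k$ with $2^{k+1}+k$ vertices, equal to the union of the $k+1$ triangulated $2k$-balls $B_i$, which meet precisely in the central $k$-torus, as claimed.
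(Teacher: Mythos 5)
Your proof takes essentially the same route as the paper's: both work outward from the identification of the permcycle $k$-torus with the geometric central torus in the Fubini--Study metric (via Proposition~\ref{k-tori}, which realises $\sigma$ as the cyclic coordinate shift), extend this through the poset of solid-torus pieces using the disjointness hypothesis, and conclude that the union of the $k+1$ cones $B_i=p_i*\sigma^i(S)$ yields the correct decomposition. You spell out several points that the paper leaves implicit --- the Boolean-algebra overlap pattern $B_I\cap B_J=B_{I\cup J}$, the verification that $C$ is a closed combinatorial manifold by inspecting vertex links, the framing subtlety in extending PL identifications over the intermediate $(S^1)^m\times B^{2k-2m}$ pieces, and the explicit check that $\tau,\rho,\sigma$ act as required --- but these are elaborations of the same argument rather than a different strategy. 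One small imprecision: your characterisation of $B_I$ as ``simplices avoiding every $p_j$ with $j\in I$'' is only correct for $|I|\ge 2$ (and even then one must add that the simplex lies in $\sigma^i(S)$ for all $i\in I$); this does not affect the argument.
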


\begin{rem}
\label{rem:sigmainv}
If the assumption on $k+1$ disjoint $2k$-balls is not
satisfied, and hence a perfect equilibrium triangulation cannot be constructed 
directly, then a subdivision of the triangulation can recover a (non-perfect) 
equilibrium triangulation of $\mathbb{C}P^k$, see the example for $k=3$ 
below. There, for a given
Hopf triangulation as the boundary of one of the balls, the images under
$\sigma$ are not disjoint because there are ``double faces'' that
are (possibly as sets of faces of one of the $2k$-balls) invariant under $\sigma$ and -- therefore -- occur in multiple of the
$2k$-balls $A_i, A_j$, but should not occur in the triangulated
intersection $A_{ij}$.
\end{rem}

\begin{proof}[Proof of Theorem~\ref{thm:main}]
Let us recall the geometry of the central equilibrium torus
\[T^k = \{[z_0,\ldots,z_k] \in \mathbb{C}P^k \ | \ |z_0| = |z_1| = \cdots =  |z_k|\}\]
in the standard Fubini-Study metric.
Multiplying a single coordinate by $e^{i\theta}$ is an isometry,
hence the equilibrium torus is homogeneous and therefore flat.
All permutations of the $k+1$ coordinates are isometries as well. 
We can identify a point $[e^{it_0}, \ldots, e^{it_k}]$
in homogeneous coordinates with the point
$(t_0, t_1, \ldots, t_k)$ in the (flat) hyperplane of $\mathbb{R}^{k+1}$
where the sum of all coordinates is zero.
This hyperplane appears as the universal covering of the central 
equilibrium torus.
We know from \cite{Ku-La-tori} that the universal covering
of the triangulated
$k$-torus from Proposition~\ref{k-tori} admits a geometric
realisation in this (flat) hyperplane such that the automorphism
$\sigma$ corresponds
to the cyclic shift of the $k+1$ coordinates.
Therefore, we can identify the triangulated $k$-torus with the
central equilibrium torus of $\mathbb{C}P^k$.

\medskip
Furthermore, the Hopf triangulation $S$ contains $k$ of the $(k+1)$-dimensional 
solid tori. One of them
introduces the triangle $(1 \ 2 \ 3)$ with the effect that one
particular coordinate direction becomes null homotopic, again due to Proposition~\ref{k-tori}.
By the automorphism $\sigma$, the other solid $(k+1)$-tori
correspond to the shifted coordinate directions.
This is the same in $\mathbb{C}P^k$.
Thus, we can identify the $(k+1)$-dimensional solid tori with
the subset $B_{123\cdots k}$ and their cyclic shifts.
This procedure can be extended to the higher-dimensional solid tori:
Each of them is a part of the Hopf triangulation which is the link
of the vertex $[1,0,\ldots,0]$. The automorphism $\sigma$, with 
$\sigma(p_i) = p_{i+1}$, cyclically shifts them to the entire projective space:
\[A_i = B_{0i}, \,\,\,\, A_{ij} = B_{0ij}, \,\,\,\, A_{ijl} = B_{0ijl}, \,\,\,\, \mbox{etc.}\] 
By construction, the ball $B_0$ is the cone from $p_0$ to the
given Hopf triangulation $S$, and $\sigma(B_i) = B_{i+1}$ cyclically.
Here, we use the assumption
that the various parts of the triangulated
solid tori intersect only along their common boundaries:
For each intermediate dimension, their relative interiors 
must be disjoint, because this is the case in the actual $\mathbb{C}P^k$.
This implies that the vertex star of one of the vertices
in the central $k$-torus is a triangulated $2k$-ball, decomposed
into the $k+1$ parts lying in the various $B_i$.
The link of such a vertex is decomposed into $k+1$ PL $(2k-1)$-balls
(with mutually disjoint relative interiors)
according to the decomposition of $\mathbb{C}P^k$
into $2k$-balls $B_0, \ldots, B_k$.
By assumption, the triangulations of these relative interiors are also
mutually disjoint.
\end{proof}

{\bf A (non-perfect) equilibrium triangulation of $\mathbb{C}P^3$.}

We want to apply Theorem~\ref{thm:main} to the Hopf triangulation $S^5_{15}$
from Section~\ref{sec:fivesphere}. The central $3$-torus contains all 15 vertices and all
$15 \choose 2$ edges,
and the automorphism $\sigma$ of order $4$ indeed represents
the cyclic shift of the coordinates in the hyperplane
of $4$-space defined by vanishing of the sum of the $4$ coordinates 
\cite{Ku-La-tori}, similarly for the $4$-dimensional solid tori. 
Hence, the first part of the assumptions
in Theorem~\ref{thm:main} is satisfied.
However, the issue of non-disjointedness of
the various parts leads to difficulties in the
construction of an equilibrium triangulation of $\mathbb{C}P^3$
from this particular Hopf triangulation of $S^5$.
If we start with the cone from $p_0$ to $S^5_{15}$ and apply 
the permutation $\sigma$, we obtain four 6-balls as required.
However, this is not quite a combinatorial triangulation
for the following reasons:

\begin{enumerate}
  \item The (short) $\mathbb{Z}_{15}$-orbit of $(0\  5\  10) = (0\  5\  5')$ is invariant under $\sigma$ and hence contained in each
of the four $6$-balls. However, it is not contained in the central torus. These triangles have disconnected vertex links in the union of the four $6$-balls.
  \item The $\mathbb{Z}_{15}$-orbits of $(3\  5\  5'\ 3')$ and $\sigma((3\  5\  5'\ 3')) = (5\  6\  6'\ 5')$ are invariant under $\sigma^2$ and contained 
  in each of the four 6-balls but not in the central torus. These quadruples have disconnected links in the union of the four $6$-balls as well. Note
  that triangles $(3\  5\ 3')$ and $(3\  5'\ 3')$, and $(5\  6\  6')$ and $(6\  6'\ 5')$ in the boundaries of $(3\  5\  5'\ 3')$ 
  and $(5\  6\  6'\ 5')$ respectively are not contained in the central torus either, while the other boundary triangles are.  
\end{enumerate}

In principle one can repair this problem by introducing a subdivision of those 
simplices which have disconnected links in the union of the four 6-balls:

\begin{enumerate}
  \item All five triangles in the orbit of $(0\ 5\ 5')$ have as their links
  two disjoint $3$-spheres. Stellarly subdividing triangles in
  simplices contributing to one of these $3$-spheres fixes this problem.
  This step requires five additional vertices in total.
  \item The case of the orbits of $(3\  5\  5'\ 3')$ and $(5\  6\  6'\ 5')$ is somewhat 
  more difficult: each of the $30$ tetrahedra in these orbits has as their link
  two disjoint $2$-spheres. Subdividing tetrahedra in simplices of one of these $2$-spheres
  as above leaves us with triangle orbits of $(3\  5\ 3')$, $(3\  5'\ 3')$, $(5\  6\  6')$, 
  and $(6\  6'\ 5')$ still in the union of all four $6$-balls, but not in the central torus.
  Instead, we subdivide each of the triangles in these four orbits (following the procedure described above) 
  and choose a triangulation of the respective surrounding tetrahedra into five tetrahedra each,
  with the added vertices in the boundary triangles spanning an edge.
  This step simultaneously takes care of the bad tetrahedra links and requires a
  total of $4 \cdot 15 = 60$ additional vertices.
\end{enumerate}

Altogether this yields the following result:

\begin{pro}
There exists an equilibrium triangulation of $\mathbb{C}P^3$ with $84$ vertices.
\end{pro}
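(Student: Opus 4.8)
The plan is to construct the equilibrium triangulation of $\mathbb{C}P^3$ by carefully executing the repair procedure outlined above, starting from the cone construction applied to $S^5_{15}$, and then to verify the vertex count. First I would take the Hopf triangulation $S^5_{15}$ from Proposition~\ref{prop:s5}, form the cone $B_0 = p_0 * S^5_{15}$, and apply the powers of $\sigma$ to obtain four $6$-balls $B_0, B_1, B_2, B_3$ whose union is a pseudo-triangulation of $\mathbb{C}P^3$ in the sense that it is a simplicial complex carrying the right homeomorphism type but failing the link condition at precisely the faces identified above (the $\sigma$-invariant orbit of $(0\ 5\ 5')$ and the $\sigma^2$-invariant orbits of $(3\ 5\ 5'\ 3')$ and $(5\ 6\ 6'\ 5')$). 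The key structural input is that these are the \emph{only} bad faces: by Theorem~\ref{thm:main}, the construction already works away from faces that sit in more than the ``correct'' number of the $B_i$, so I would argue that the offending faces are exactly those $\sigma$- or $\sigma^2$-invariant orbits not contained in the central torus, and that their links decompose into the stated disjoint spheres.

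Next I would carry out the subdivisions. For step~(1), each of the five triangles in the orbit of $(0\ 5\ 5')$ has link equal to two disjoint triangulated $3$-spheres; I stellarly subdivide each such triangle (introducing one new vertex per triangle, five in total) within the simplices contributing to one of the two $3$-spheres, which reconnects the link into a single $3$-sphere. For step~(2), each of the $30$ tetrahedra in the orbits of $(3\ 5\ 5'\ 3')$ and $(5\ 6\ 6'\ 5')$ has link two disjoint $2$-spheres; after subdividing these tetrahedra one is left with the residual bad triangle orbits $(3\ 5\ 3')$, $(3\ 5'\ 3')$, $(5\ 6\ 6')$, $(6\ 6'\ 5')$, and I handle all four orbits ($4 \cdot 15 = 60$ triangles, one new vertex each, $60$ vertices total) by subdividing each triangle and repartitioning each surrounding tetrahedron into five tetrahedra, with the two new vertices on a shared boundary edge spanning an edge of the refined complex; this is a standard local move and one checks it simultaneously cures the tetrahedron-link and triangle-link defects. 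The vertex count is then $15$ (torus) $+\, 4$ (the $p_i$) $+\, 5 + 60 = 84$, giving the stated figure.

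The main obstacle I expect is the bookkeeping in step~(2): one must verify that the chosen five-tetrahedron subdivision of each surrounding tetrahedron is consistent across adjacent tetrahedra (so that the global object remains a genuine simplicial complex and not merely a $\Delta$-complex), that it is compatible with the $\mathbb{Z}_{15}$- and $\sigma$-symmetry of the orbits so that all $15$ triangles in each orbit can be treated in parallel, and that after all subdivisions every face — in particular every new vertex and every new edge — has a link that is a PL sphere of the correct dimension, so that the result is a \emph{combinatorial} triangulation in the sense of Definition~\ref{def:hopf}. I would also need to confirm that the subdivisions respect the decomposition into the balls $B_i$ and their intersections, i.e.\ that after subdivision the subsets $B_i, B_{ij}, B_{ijl}, B_{0123}$ are still realised as subcomplexes, so that the equilibrium property of Definition~\ref{def:equilibrium} is preserved; this follows because each subdivision is supported on simplices lying in a controlled union of the $B_i$, but it requires a short case analysis for the four residual triangle orbits, whose boundary triangles split between those lying in the central torus and those that do not.
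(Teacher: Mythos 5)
Your proposal follows essentially the same route as the paper: start from the cone $p_0 * S^5_{15}$, sweep with the four powers of $\sigma$, identify the $\sigma$-invariant orbit of $(0\ 5\ 5')$ and the $\sigma^2$-invariant orbits of $(3\ 5\ 5'\ 3')$ and $(5\ 6\ 6'\ 5')$ as the only faces with bad links, repair them by the two-stage subdivision with $5 + 60$ new vertices, and total $15 + 4 + 65 = 84$. The paper's proof is somewhat terser (it invokes Theorem~\ref{thm:main} and Remark~\ref{rem:sigmainv} and asserts that the subdivisions restore all links), whereas you spell out the verification obligations — consistency of the five-tetrahedron subdivisions across shared faces, compatibility with the $\mathbb{Z}_{15}$- and $\sigma$-symmetry, and preservation of the $B_i$ as subcomplexes — which the paper leaves implicit; these are exactly the points a careful reader would want checked, so this is a faithful reconstruction of the argument.
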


\begin{proof}
  This is a corollary of Theorem~\ref{thm:main}.
  We start with the complex obtained from applying $\sigma$ to the cone of 
  $S_{15}^5$ over $p_0$.

  As already mentioned, the first part of the assumptions of Theorem~\ref{thm:main}
  is already satisfied. Following Remark~\ref{rem:sigmainv} it remains to check the complex 
  for the non-disjointness conditions, that is, for sets of faces that are invariant under $\sigma$,
  but not in the central $3$-torus.
    
  Since the central $3$-torus already contains all ${15 \choose 2} = 105$ 
  possible edges of the vertex set of the intersection of the four $6$-balls,
  we only need to check links of faces of dimension at least two. 
  As a result of the subdivisions described above, the links of all triangles
  are valid, as are the links of all tetrahedra. The links of $4$- and $5$-faces
  are already valid in the initial complex.
  
  The result follows from the fact that the initial complex has $15+4=19$ vertices
  and the subdivisions add another $5 + 60 = 65$ vertices.
\end{proof}

Unfortunately, our triangulation of $\mathbb{C}P^3$ has considerably more vertices than what is needed
for the construction of $\mathbb{C}P^3$ in \cite{BD2}. The theoretical lower bound is $17$ vertices \cite{AM}.

\medskip

We can strengthen the observation that the Hopf triangulation of the $5$-sphere 
from Proposition~\ref{prop:s5} does not produce a perfect equilibrium 
triangulation of $\mathbb{C}P^3$ as follows. 

\begin{thm} 
  \label{thm:noperfectcp3}
  There does not exist a perfect equilibrium triangulation of $\mathbb{C}P^3$.
\end{thm}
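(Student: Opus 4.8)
The plan is to argue by contradiction: suppose a perfect equilibrium triangulation $C$ of $\mathbb{C}P^3$ exists. By Definition~\ref{def:equilibrium}, its central torus $T^3$ is forced to be the $15$-vertex permcycle triangulation ${\bf 1\ 2\ 4\ 8}$, there are exactly four further vertices $p_0,p_1,p_2,p_3$ (one per ball $B_i$), and the automorphisms $\tau,\rho,\sigma$ of Equations~(\ref{eq:tau})--(\ref{eq:sigma}) all extend to $C$ with $\tau,\rho$ fixing each $p_i$ and $\sigma$ cyclically shifting the $p_i$. So $C$ has exactly $19$ vertices and a group of automorphisms of order at least $|\langle\tau,\rho,\sigma\rangle|$ acting in a prescribed way. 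The strategy is to show that these constraints overdetermine $C$: the ball $B_0$ must be the cone from $p_0$ over a Hopf triangulation $S$ of $S^5$ on the $15$ torus vertices that is $\tau$- and $\rho$-invariant and contains the permcycle ${\bf 1\ 2\ 4\ 8}$ as central $3$-torus together with its $4$-dimensional solid-torus fillings. First I would establish that $S$ must in fact be one of the four triangulations classified in Section~\ref{sec:fivesphere}, and then rule out each of them.

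The key reduction is the following. Since $\sigma(B_i)=B_{i+1}$ and $\bigcap_i B_i = T^3$ must hold as in the genuine $\mathbb{C}P^3$, the boundary $\partial B_0 = S$ decomposes (via the Hopf/zone-of-influence dictionary $A_i = B_{0i}$, etc.) into solid tori $A_1,A_2,A_3$ whose $4$-dimensional pieces $A_{12},A_{13},A_{23}$ are the $\sigma$-images of one $(k+1)$-dimensional solid torus filling of the central $3$-torus; by Proposition~\ref{k-tori} that filling, once we fix which coordinate direction becomes nullhomotopic, is the permcycle ${\bf 1\ 1\ 1\ 4\ 8}$ and its $\sigma$-multiples. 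So $S$ satisfies hypotheses (a) and (b) of the classification in Section~\ref{sec:fivesphere}, hence $S$ is one of the four triangulations $^515^7_3$, $^515^2_5$, $^515^2_2$, $^515^7_1$ (up to the action of $\sigma$). For $C$ to be a genuine combinatorial triangulation, the $\sigma$-images of $B_0$ must intersect only along the central torus — equivalently, $S$ must satisfy the disjointness hypothesis of Theorem~\ref{thm:main}. The analysis right before the statement (the two bulleted obstructions for $S^5_{15}$) shows $^515^2_5$ fails this: the $\sigma$-invariant orbit of $(0\ 5\ 5')$ and the $\sigma^2$-invariant orbits of $(3\ 5\ 5'\ 3')$ and $(5\ 6\ 6'\ 5')$ lie outside the central torus, so after coning they would have disconnected links. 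I would then carry out the same check for the other three triangulations: each of $^515^7_3$, $^515^2_2$, $^515^7_1$ has orbit size $2$ under $\sigma$ (so $\sigma^2$ fixes it) and contains two of the four multiples of ${\bf 1\ 1\ 1\ 4\ 8}$ plus one solid torus of a different type invariant under $\sigma^2$; this $\sigma^2$-invariant piece does not lie in the central $3$-torus, so coning its boundary and applying $\sigma$ again produces faces with disconnected links, exactly as in the explicit case. Since the four cases exhaust the possibilities, no $S$ works, and no perfect equilibrium triangulation of $\mathbb{C}P^3$ exists.

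The main obstacle is the first reduction — proving rigorously that the central torus and the $4$-dimensional solid-torus fillings inside $S$ are \emph{forced} to be the permcycle triangulations rather than merely admissible choices. One direction is clear from Definition~\ref{def:equilibrium} (the central torus is part of the definition of "perfect"), but one must also argue that the $\sigma$-orbit structure on the $B_i$ pins down the $4$-dimensional pieces $A_{ij}=B_{0ij}$: here one invokes Proposition~\ref{k-tori} together with the fact that, by perfectness, $\sigma$ realises the cyclic shift of the four homogeneous coordinate directions, so the solid torus $B_{123}$ is determined by which coordinate it kills, and its $\sigma$-translates give the rest. Once $S$ is boxed into the finite classification of Section~\ref{sec:fivesphere}, the remaining work is a finite, if slightly tedious, case check of link-connectedness after coning — the kind of computation already done explicitly in the text for $^515^2_5$ and easily automated with \texttt{simpcomp} \cite{simpcomp}.
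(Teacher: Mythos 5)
Your overall strategy matches the paper exactly: both arguments reduce the question to the finite list of $\tau$- and $\rho$-invariant $15$-vertex Hopf triangulations of $S^5$ containing the permcycle central $3$-torus (the four triangulations $^5 15^7_3$, $^5 15^2_5$, $^5 15^2_2$, $^5 15^7_1$ from Section~\ref{sec:fivesphere}), and both rule out $^5 15^2_5$ via the disjointness failure already established in the discussion preceding the theorem. You also correctly identify the hidden step of justifying that the $4$-dimensional pieces $A_{ij}$ must be forced to be $\sigma$-multiples of the permcycle ${\bf 1\ 1\ 1\ 4\ 8}$, which the paper leaves somewhat implicit.

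Where you diverge is in how the remaining three candidates are excluded. The paper dispenses with them by a purely structural observation: applying $\sigma$ to the $4$-dimensional subsets of $^5 15^7_3$, $^5 15^2_2$, $^5 15^7_1$ does not produce the required four \emph{distinct} pieces $B_{012},B_{013},B_{023},B_{123}$, because each of these spheres contains only two $\sigma$-multiples of ${\bf 1\ 1\ 1\ 4\ 8}$ together with a $\sigma^2$-invariant solid torus of a different combinatorial type, and $\sigma^2$-invariance of a $B_{0ij}$ would force two of the four index-sets of size three to coincide — a contradiction before any link check is made. You instead try to extend the $^5 15^2_5$-style disconnected-link argument, asserting that coning the $\sigma^2$-invariant piece and applying $\sigma$ ``produces faces with disconnected links, exactly as in the explicit case.'' That analogy is plausible but is not established by the explicit computation in the text (which concerns specific $\sigma$- and $\sigma^2$-invariant triangles and tetrahedra in $^5 15^2_5$, not a whole solid torus), so this step would need its own verification. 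The paper's observation is both shorter and avoids that verification entirely, since it rules these three out at the level of index sets. You also state that $^5 15^2_2$ ``has orbit size $2$ under $\sigma$,'' but according to Section~\ref{sec:fivesphere} its orbit under $\sigma$ has size $4$; only $^5 15^7_3$ and $^5 15^7_1$ are $\sigma^2$-invariant. This does not invalidate the relevant property you use (each of the three contains a $\sigma^2$-invariant solid torus of a non-permcycle type), but the orbit-size claim should be corrected.
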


\begin{proof}
  By definition, a perfect equilibrium triangulation of $\mathbb{C}P^3$ can be constructed from a $15$-vertex Hopf triangulation of the $5$-sphere respecting $\tau$ and $\rho$, and containing the $3$-dimensional torus permcycle {\bf 1~2~4~8}. A complete list of such triangulations is $^5 15^7_3$, $^5 15^2_5$, $^5 15^2_2$, and $^5 15^7_1$ in \cite{KoLu} as detailed in Section~\ref{sec:fivesphere}. Of these, only $^5 15^2_5 \cong S^5_{15}$ 
produces exactly four distinct $4$-dimensional subsets $B_{012}$, $B_{013}$, $B_{023}$, and $B_{123}$ by applying $\sigma$. To see this check the $4$-dimensional solid tori contained in all Hopf triangulations of the $5$-sphere listed in Section~\ref{sec:fivesphere}. On the other hand, $S^5_{15}$ is not suitable either by the discussion above.
\end{proof}

\begin{rem}
  Note that the Hopf triangulation of $S^7$ as presented in Section~\ref{sec:hopf} and Appendix~\ref{app:s7} is not suitable to produce a 
  perfect equilibrium triangulation of $\mathbb{C}P^4$. To see this note that handlebodies $A_i$, $1\leq i \leq 4$ all have distinct sizes.
\end{rem}

\section{The real case: Equilibrium triangulations of $\mathbb{R} P^k$}
\label{sec:equilreal}

In the case of real spheres $S^{k-1}$ of arbitrary dimension 
and real projective spaces $\mathbb{R}P^k$, we have an equivalent decomposition
as in the complex case from Section~\ref{sec:equilcomplex} by considering the real part of
$\mathbb{C}P^k$. We simply replace
the complex coordinates $z_0, \ldots,z_k$ by real coordinates $x_0,\ldots,x_k$,
and the $k$-torus $(S^1)^k$ by the ``$0$-dimensional $k$-torus'' $(S^0)^k$ with
$2^k$ isolated points.
Then the 1-dimensional ``solid torus'' appears as 
$B^1 \times S^0 \times S^0 \times \cdots \times S^0$ 
which is nothing but a collection
of $2^{k-1}$ disjoint intervals. A good picture of such
a real Hopf decomposition of $S^{k-1}$ appears
 if we interpret it as the boundary of
a $k$-cube with $2^k$ vertices, $2^{k-1}$ parallel edges in one
direction, $2^{k-2}$ parallel squares in two directions and so on.
Finally, the boundary of the cube is represented as the union of
$k$ pairs of opposite facets, each topologically of type $B^{k-1} \times S^0$.

\begin{defi}[Equilibrium triangulation of $\mathbb{R}P^k$]
  
We call a combinatorial triangulation $C$ of $\mathbb{R}P^k$ 
an {\em equilibrium triangulation}, if all subsets $B_i, B_{ij}, B_{ijl},  
\ldots ,B_{0123 \ldots k}$ (with real coordinates $x_i$ instead of
complex coordinates $z_i$) are PL homeomorphic with 
subcomplexes of $C$.
In this case the central ``torus'' $T^k = B_0 \cap \cdots \cap  B_k$
consists of $2^k$ isolated points. 
\end{defi}

In view of the complex case, it makes sense to start with
an equilibrium triangulation
of $\mathbb{C}P^k$ and to regard $\mathbb{R}P^k$ as
the fixed point set of the involution $\rho \colon x \mapsto -x$. 
In the (trivial) case of $\mathbb{C}P^1$, we obtain a circle divided into
four edges, where the four vertices are $0$, $p_0$, $p_1$ and
the centre of the edge $(1 \ 2)$, see Figure~\ref{fig:RPk} on the left. In the case 
of the equilibrium triangulation of $\mathbb{C}P^2$ with $7 + 3$
vertices, this fixed point set is an equilibrium triangulation 
of $\mathbb{R}P^2$ with $4 + 3$ vertices, see \cite{BK} and Figure~\ref{fig:RPk} on the right.
The four fixed points of the central torus are $0$ and the midpoints of
the edges $(1\ 6)$, $(2\ 5)$, $(3\ 4)$, the three exterior vertices are the
same as in the complex case $p_0 = [1,0,0]$, $p_1 = [0,1,0]$, and $p_2 = [0,0,1]$.
The automorphism group contains the cyclic shift 
$p_0 \mapsto p_1 \mapsto p_2 \mapsto p_0$,
combined with
the multiplication by $2 \mod 7$ on the pairs $\pm x$ of vertices in the
central torus, a clockwise rotation by $2\pi / 3$ in Figure~\ref{fig:RPk} on the right.

\begin{figure}
  \includegraphics[width=0.8\textwidth]{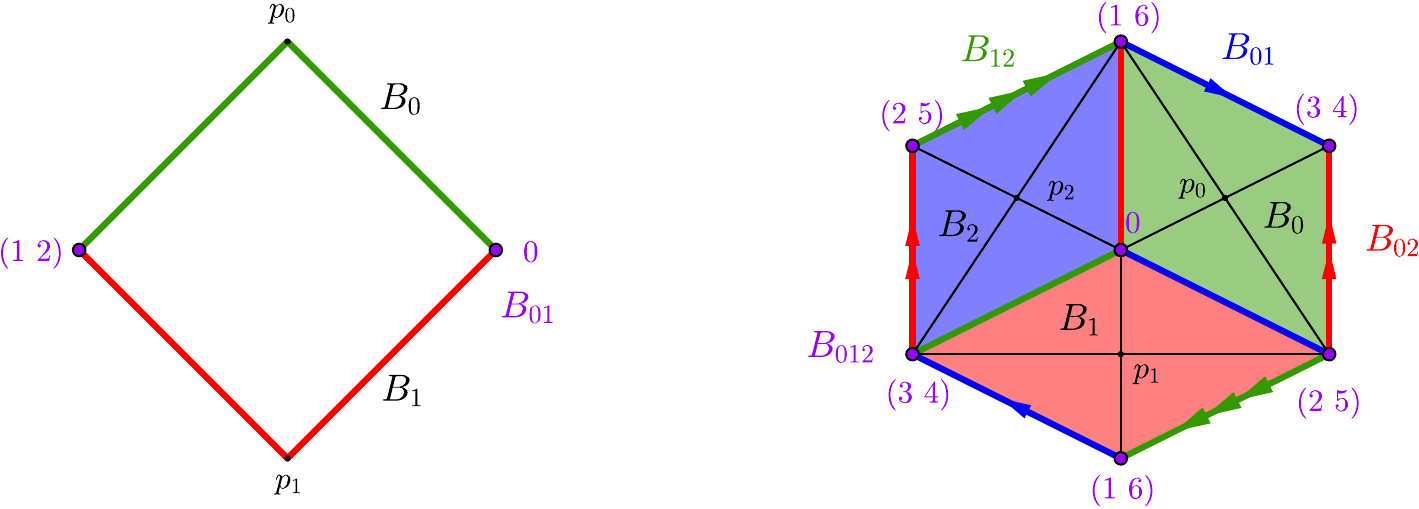}
  \caption{Equilibrium triangulations of (a) $\mathbb{R}P^1$ and (b) $\mathbb{R}P^2$ as real parts of perfect equilibrium triangulations of $\mathbb{C}P^1$ and $\mathbb{C}P^2$.
  Both triangulations are, in fact, nice equilibrium triangulations of real projective spaces, cf. Definition~\ref{def:nice}. \label{fig:RPk}}
\end{figure}

\begin{pro}
Given a perfect equilibrium triangulation of $\mathbb{C}P^k$
with automorphisms $\tau$, $\rho$, and $\sigma$, 
there exists an equilibrium triangulation of $\mathbb{R}P^k$ with $2^k$
vertices defined as the fixed point set of the involution $\rho$.
Its vertices are $0$ and the midpoints of the edges $(x\ x')$ for each
$x \in \mathbb{Z}_n \setminus \{0\}$.
\end{pro}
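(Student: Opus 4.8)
The plan is to verify that the claimed fixed-point set $\mathrm{Fix}(\rho)$ really is a subcomplex of the perfect equilibrium triangulation $C$ of $\mathbb{C}P^k$, that it has the stated $2^k$ vertices, that it is homeomorphic to $\mathbb{R}P^k$, and that it inherits from $C$ the decomposition into the subsets $B_i, B_{ij}, \dots$ required by the definition of an equilibrium triangulation. First I would recall that $\rho$ is a simplicial involution of $C$; since $C$ is a combinatorial triangulation, standard PL fixed-point theory tells us that after one barycentric subdivision the fixed-point set of a simplicial involution is a subcomplex, and in fact in our situation the fixed set is already better behaved: by the discussion preceding Proposition~\ref{k-tori}, on the central torus $\rho\colon x\mapsto -x$ has exactly $2^k$ fixed points — the vertex $0$ and the midpoints of the $2^{k}-1$ edges $(x\ x')$ — and by the perfectness hypothesis $\rho(p_i)=p_i$, so each cone vertex $p_i$ is also fixed. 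So I would identify the relevant subcomplex of $C$ (or of its barycentric subdivision, carrying the midpoints as honest vertices) whose underlying space is $\mathrm{Fix}(\rho)$, and count: $2^k$ vertices from the torus part, while the $p_i$ lie in the boundary spheres rather than contributing new isolated components, matching the $2^k$ count asserted.

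Next I would identify this fixed-point set topologically. Geometrically, $\rho$ is complex conjugation $[z_0,\dots,z_k]\mapsto[\bar z_0,\dots,\bar z_k]$ on $\mathbb{C}P^k$, whose fixed-point set is exactly $\mathbb{R}P^k$; since $C$ is a combinatorial triangulation of $\mathbb{C}P^k$ and $\rho$ is a PL involution realising this map, the fixed-point set with its induced PL structure is a PL $\mathbb{R}P^k$ — here one invokes that the fixed set of a PL involution on a PL manifold is a PL submanifold (after subdivision), and that its homeomorphism type is forced by the smooth model. This is the step I expect to be essentially a citation rather than new work, paralleling the $k=1,2$ cases worked out explicitly just before the statement and in \cite{BK}.

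The real content is checking the equilibrium condition: for each nonempty $S\subseteq\{0,1,\dots,k\}$ the real part $B_S\cap\mathrm{Fix}(\rho)$ — i.e. the subset defined by the same domination inequalities $|x_i|\ge|x_j|$ but with real coordinates — must be PL homeomorphic to a subcomplex of $\mathrm{Fix}(\rho)$. Here I would use that in $C$ the complex $B_S$ is $\rho$-invariant (the defining inequalities $|z_i|\ge|z_j|$ are preserved by conjugation), so $B_S\cap\mathrm{Fix}(\rho)=\mathrm{Fix}(\rho|_{B_S})$ is again a fixed set of a simplicial involution on the subcomplex $B_S$, hence a subcomplex of $\mathrm{Fix}(\rho)$; and on the topological side $B_S\cap\mathbb{R}P^k$ is precisely the real "zone of influence" described at the start of Section~\ref{sec:equilreal}, whose type ($B^{k-\lvert S\rvert}\times (S^0)^{\lvert S\rvert-1}$, with the extreme case $S=\{0,\dots,k\}$ giving the $2^k$ isolated points) is forced since restricting the Fubini–Study picture to the real locus turns each $S^1$-factor of a solid torus into an $S^0$. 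The main obstacle is making the "fixed set of a simplicial involution is a subcomplex" argument clean enough that the decomposition pieces $B_S$ of $\mathbb{R}P^k$ are honestly subcomplexes of one and the same triangulation: one must be careful that a single barycentric subdivision simultaneously realises $\mathrm{Fix}(\rho)$ and all the $\mathrm{Fix}(\rho|_{B_S})$ as subcomplexes, but since all the $B_S$ are $\rho$-invariant subcomplexes of the one complex $C$, a single subdivision of $C$ does the job for all of them at once, and the midpoints of the edges $(x\ x')$ are exactly the new barycentric vertices that survive, giving the explicit vertex description in the statement.
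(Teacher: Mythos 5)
Your overall plan is correct and follows essentially the same route as the paper's proof: the paper also identifies $\rho$ with complex conjugation (checking it fixes the $p_i$ and acts as $x\mapsto -x$ on torus vertices with a unique simplexwise-linear extension), and reads off the fixed set as a triangulated $\mathbb{R}P^k$ whose central-torus vertices are $0$ and the edge midpoints. You are more explicit about the PL bookkeeping — passing to (barycentric) subdivision so that $\mathrm{Fix}(\rho)$ and each $\mathrm{Fix}(\rho|_{B_S})$ are genuine subcomplexes, and deriving the equilibrium decomposition of $\mathbb{R}P^k$ from the $\rho$-invariance of the subcomplexes $B_S$ — which the paper leaves implicit; that extra care is a welcome addition rather than a departure.

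One thing you should fix is the remark that the $p_i$ ``lie in the boundary spheres rather than contributing new isolated components, matching the $2^k$ count asserted.'' This misreads the count. The $p_i$ \emph{are} additional vertices of the triangulated $\mathbb{R}P^k$; what the $2^k$ in the statement counts is only the $0$-dimensional central ``torus'' $T^k = B_0\cap\cdots\cap B_k$ of the real equilibrium decomposition, namely the fixed points of $\rho$ lying in the central $k$-torus of $\mathbb{C}P^k$ (the vertex $0$ plus the $2^k-1$ midpoints of edges $(x\ x')$). The full triangulation of $\mathbb{R}P^k$ has $2^k + k + 1$ vertices, as the paper's worked examples confirm ($4 = 2+1+1$ for $k=1$ and $7 = 4+3$ for $k=2$). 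You should state this directly rather than argue the $p_i$ away.
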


\begin{proof}
Under the automorphisms $\tau$ and $\rho$
the outer vertices $p_0, \ldots, p_k$ are fixed.
Since $\rho$ acts as complex conjugation on the central
equilibrium torus, this extends naturally to the higher dimensional
solid tori since it is determined by $\rho(x) = -x$ mod $n$ which
admits one and only one simplexwise linear extension.
From Proposition~\ref{k-tori} one can even determine
the positions of all vertices of the central $k$-torus in terms
of homogeneous coordinates: In the additive structure of
the hyperplane of all $(t_0, \ldots, t_k)$ with $\sum_jt_j = 0$
the automorphism $\rho$
acts as $t_j \mapsto -t_j$, in the complex description in homogeneous
coordinates with $[e^{it_0}, \ldots, e^{it_k}]$ it appears as the explicit
conjugation $e^{it} \mapsto e^{-it}$.
\end{proof}

\medskip

\noindent
    {\bf $\mathbb{R}P^3$ as the fixed point set of $\rho$ in
      the triangulated $\mathbb{C}P^3$.}

For $\mathbb{R}P^3$ we can determine the fixed point
set of the Hopf triangulation of the 5-sphere in Section~\ref{sec:fivesphere} under the involution $\rho : x \mapsto (-x \mod 15)$.
The fixed point set contains $0$ and the centres of the seven edges $(x\ x')$
which we denote by $\pm x$ or even by $x$ if there is no danger of confusion.
From the original 5-sphere we obtain the triangulated 2-sphere 
as the fixed point set of $\rho$ with triangles
\[(0\ 1\ 2), (1\ 2\ 3), (4\ 5\ 6), (5\ 6\ 7), \ \ (0\ 2\ 4), (2\ 4\ 6), (1\ 3\ 5), (3\ 5\ 7), \ \ (0\ 1\ 5), (0\ 4\ 5), (2\ 3\ 7), (2\ 6\ 7)\]
where the three blocks come from the three columns (representing solid tori)
in Section~\ref{sec:fivesphere} in the same order.
Each block represents a ``solid torus" of type $S^0 \times B^2$,
in fact nothing but a pair of opposite facets of a $3$-cube
triangulated by the main diagonal $(2\ 5)$ (which, however, is not part
of the triangulation). For the equilibrium triangulation of $\mathbb{R}P^3$
we put one of the outer vertices into the centre of this triangulated
2-sphere. Then we apply the cyclic permutation $p_0 \mapsto p_1 \mapsto p_2 \mapsto p_3 \mapsto p_0$
combined with $\tilde{\sigma} = (1247)(36)(0)(5)$ arising from the
multiplication by $2 \mod 15$ ($\sigma$ from Equation~\ref{eq:sigma}) after 
identification $\pm$ under complex conjugation ($\rho : x \mapsto (-x \mod 15)$ from Equation~(\ref{eq:rho})).
This leads to four solid cubes covering $\mathbb{R}P^3$.

\medskip
However, as in the case of $\mathbb{C}P^3$, this does not quite 
give a combinatorial triangulation since the links of the edges $(0\ 5)$, 
$(3\ 5)$ and $(6\ 5)$ consist of two cycles instead of one. 
Again, as in the case of $\mathbb{C}P^3$, we can repair this problem by subdividing faces.
This leads to a $15$-vertex equilibrium triangulation of $\mathbb{R}P^3$.

Note that this $15$-vertex triangulation of $\mathbb{R}P^3$ is isomorphic
to the triangulation that is the fixed point set under complex conjugation of 
the $84$-vertex equilibrium triangulation of $\mathbb{C}P^3$. For this we assume
that its added vertex in the centre of triangle $(0\ 5\ 5')$ lies in the fixed point set
as the added vertex on edge $(0\ 5)$.
We furthermore assume that the fixed point on the edge connecting the two extra vertices 
added to the centres of $(3\ 5\ 3')$ and $(3\ 5'\ 3')$ in tetrahedron 
$(3\ 5\ 5'\ 3')$ coincides with the extra vertex subdividing $(3\ 5)$ in the triangulated 
equilibrium $\mathbb{R}P^3$, and similarly for $(5\ 6\ 6'\ 5')$ and the edge $(5\ 6)$.

\medskip

\noindent
{\bf Other equilibrium triangulations of $\mathbb{R}P^3$.}

Independent of considerations about complex projective spaces
there is the following direct approach
for obtaining an equilibrium triangulation of $\mathbb{R}P^k$:

We regard $\mathbb{R}P^k$ as the boundary of a $(k+1)$-cube
after identification of antipodal points.
These $k+1$ cubes define a cubical decomposition which, unfortunately,
is not polyhedral because its $k$-dimensional cubes pairwise intersect in a disconnected set.
However, the following statement still holds.

\begin{pro}
  \label{prop:polyhedral}
  There exists a polyhedral decomposition of $\mathbb{R}P^k$ with $2^k + k + 1$ vertices and into 
  $2k(k+1)$ pyramids over the $(k-1)$-cube containing all subsets of an equilibrium 
  decomposition of $\mathbb{R}P^k$ as polyhedral subcomplexes.
\end{pro}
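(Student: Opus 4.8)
The plan is to build the decomposition explicitly from $k+1$ solid $k$-cubes and then fix the one defect (non-polyhedral intersections of the top-dimensional cubes) by a minimal barycentric-type subdivision only along those faces where the cubes meet. First I would set up coordinates: represent $\mathbb{R}P^k$ as the boundary of a $(k+1)$-cube $[-1,1]^{k+1}$ with antipodal identification, so that the $k+1$ cubes $C_i = \{x : |x_i| = 1\}/\!\sim$ cover $\mathbb{R}P^k$, each $C_i$ being a solid $k$-cube (after the identification collapses the two opposite facets $x_i = 1$ and $x_i = -1$ to one). Their pairwise intersections $C_i \cap C_j$ are the images of the codimension-two faces $\{|x_i| = |x_j| = 1\}$, which form a disconnected union of $(k-1)$-cubes — this is exactly why the raw cubical complex is not polyhedral, matching the ``zones of influence'' picture and the real Hopf decomposition described earlier in the paper.

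Next I would perform the repair. Put one new vertex $p_i$ at the centre of each solid cube $C_i$ (these are the $k+1$ ``outer'' vertices $p_0, \dots, p_k$), and cone: the boundary $\partial C_i$ is the boundary of a $k$-cube, i.e.\ $2k$ facets each a $(k-1)$-cube, so coning $p_i$ to $\partial C_i$ writes $C_i$ as a union of $2k$ pyramids over a $(k-1)$-cube. Doing this for every $i$ gives $2k(k+1)$ pyramids in total. The key point is that after coning, each of the troublesome shared $(k-1)$-cube faces $F \subset C_i \cap C_j$ is now a \emph{facet} of exactly one pyramid in $C_i$ and one pyramid in $C_j$, glued along their common facet $F$ in the standard way; likewise lower faces are shared correctly. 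I would verify that the resulting cell complex is polyhedral: every cell is a polytope (a pyramid over a cube, or a face thereof), and the intersection of any two cells is a common face — this needs checking precisely at the interfaces between different $C_i$'s, where it reduces to the statement that the full boundary complex of the $k$-cube, with all $2k$ facets present, glues along honest faces.

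Then I would count vertices: the $2^k$ vertices of the cube boundary (= the central real ``torus'' $(S^0)^k$, surviving the antipodal identification since no vertex is antipodal to another on $\partial C_i$ within a single $C_i$ — here one must be slightly careful, but the $2^k$ vertices are exactly the midpoints-of-diagonals / cube-vertex picture described just before Proposition~\ref{prop:polyhedral}), plus the $k+1$ cone apices $p_0, \dots, p_k$, for a total of $2^k + k + 1$. Finally I would check the equilibrium property: the subset $B_i$ is realised as the solid cube $C_i$ together with its cone point, hence as a subcomplex; $B_{ij}$ is $C_i \cap C_j$ with its induced subdivision, again a subcomplex; and inductively every $B_{i_1 \cdots i_m}$ down to $T^k = B_{0\cdots k}$ (the $2^k$ vertices) is a subcomplex, since coning from the $p_i$ only ever subdivides the \emph{interiors} of the solid cubes and leaves all the intersection strata as honest subcomplexes.

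The main obstacle I expect is the polyhedrality verification at the interfaces — ensuring that two pyramids coming from different cubes $C_i$ and $C_j$ meet in a common face rather than, say, a face of one meeting the relative interior of a face of the other. This is precisely the subtle point that the naïve cubical complex fails, so the proof has to pinpoint why coning each $C_i$ from its own centre \emph{does} cure it: the cure works because the shared faces $C_i \cap C_j$ were never themselves subdivided (only the strict interiors of the $C_i$ were), so each shared $(k-1)$-cube stays intact and is a genuine facet on both sides. A secondary, purely bookkeeping, obstacle is making the antipodal identification on $\partial[-1,1]^{k+1}$ compatible with the pyramid decomposition so that no two distinct pyramids become identified — this amounts to checking the antipodal map sends the facet $x_i = 1$ of $C_i$ to the facet $x_i = -1$ and that these are already glued within the single solid cube $C_i$, which is routine.
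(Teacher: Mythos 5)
Your proposal is correct and takes essentially the same approach as the paper: start from the cubical decomposition of $\mathbb{R}P^k$ as $\partial[-1,1]^{k+1}$ modulo antipodes, cone each of the $k+1$ solid $k$-cubes from a new central vertex $p_i$ to obtain $2k(k+1)$ pyramids over $(k-1)$-cubes, and observe that polyhedrality is restored because the disconnected components of $C_i \cap C_j$ now lie in distinct pyramids while the $(k-1)$-skeleton of the cubical complex already glues along honest faces.
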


\begin{proof}
  We start with the cubical decomposition of $\mathbb{R}P^k$ as boundary of a $(k+1)$-cube
after identification of antipodal points. For every boundary of one of the $(k+1)$ cubes of 
dimension $k$, we take its $2k$ cubical facets of dimensions $k-1$ and cone them to 
an extra vertex at the centre of the $k$-cube.

In total, this produces a decomposition of $\mathbb{R}P^k$ into $2k(k+1)$ pyramids over the
$(k-1)$-cube with $2^k + k + 1$ vertices. The decomposition is polyhedral, because connected 
components of intersections of pairs of $k$-cubes lie in different pyramids: any two 
$(k-1)$-cubes of the cubical decomposition intersect at most in a single common cube of some dimension.
\end{proof}

A straightforward approach to obtain an equilibrium triangulation of $\mathbb{R}P^k$ is thus to 
triangulate the $(k-1)$-cubes in the polyhedral complex from Proposition~\ref{prop:polyhedral}. 
This motivates the following definition.

\begin{defi}
\label{def:nice}
Given the polyhedral decomposition $C$ of $\mathbb{R}P^k$ with $n = 2^k + k + 1$ vertices and into $2k(k+1)$ pyramids, we say that an equilibrium triangulation of $\mathbb{R}P^k$ is {\em nice}, if it is a simplicial subdivision of $C$ without additional vertices.
\end{defi}

The existence of such a triangulation is trivial for $k=1$ and easy for $k=2$:
Half the 1-skeleton of a 3-cube is already simplicial
as the complete graph $K_4$ with vertices $0$, $1$, $2$, $3$,
and by inserting three extra vertices into the three squares
$(0\ 1\ 2\ 3)$ and their images under $\sigma = (123)(0)$
we obtain a $7$-vertex nice equilibrium triangulation of the
real projective plane. It coincides with the real part of the perfect 
equilibrium triangulation of $\mathbb{C}P^2$ from Proposition~\ref{CP2-equilibrium},
for a picture see \cite{BK} or \cite{Sch}. Accordingly, Figure~\ref{fig:RPk}
shows nice equilibrium triangulations of $\mathbb{R}P^k$, $k=1,2$.

\bigskip

\begin{figure}[htb]
\includegraphics[width=.6\textwidth]{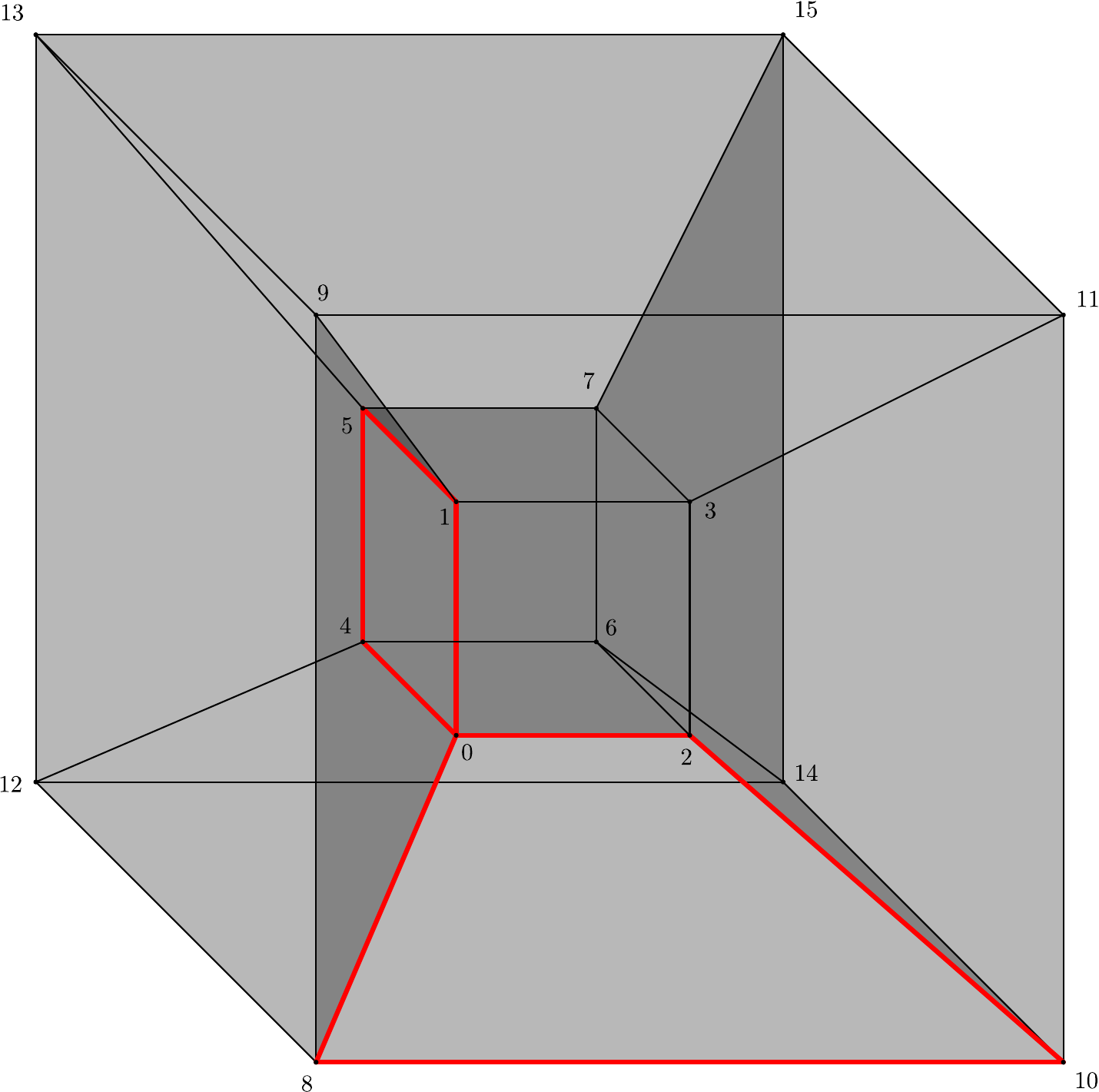}
\caption{The boundary of the $4$-cube and the ``Clifford torus''. The generators of homology for the torus are indicated in red. \label{fig:fourcube}}
\end{figure}

For $k=3$ we have the following explicit construction:

\begin{pro}
\label{prop:equilibriumRP3}
There exists a nice equilibrium triangulation of $\mathbb{R}P^3$ (with $12$ vertices).  
\end{pro}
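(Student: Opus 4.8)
The plan is to produce the nice equilibrium triangulation of $\mathbb{R}P^3$ explicitly, by subdividing the polyhedral decomposition $C$ of $\mathbb{R}P^3$ from Proposition~\ref{prop:polyhedral} into simplices using only its $2^3+3+1 = 12$ vertices. Recall that $C$ is obtained from the four $3$-cubes bounding a $4$-cube (after antipodal identification), each of whose boundary $2$-cubes is coned to the centre of that $3$-cube; this gives $2\cdot 3\cdot 4 = 24$ square pyramids, and the $12$ vertices are the $2^3 = 8$ cube vertices (the central "torus'') together with the $4$ pyramid apices $p_0,\dots,p_3$. First I would fix coordinates: label the $8$ vertices of the central torus by $\{0,1\}^3 \subset \mathbb{Z}_2^3$ and label the four coordinate-hyperplane $3$-cubes so that $p_i$ is the apex of the cube on which the $i$-th coordinate is "dominant'', with the cyclic shift $\sigma$ permuting $p_0\mapsto p_1\mapsto p_2\mapsto p_3\mapsto p_0$ together with a corresponding cyclic action on the torus vertices, and the involution $\rho$ fixing all $p_i$ and acting on the torus vertices as the central symmetry of the original cube before identification.

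The core step is to choose a \emph{coherent} triangulation of all the square $2$-faces of the four $3$-cubes — that is, for each of the $4\cdot 6 = 24$ square facets, a choice of one of its two diagonals — such that the two copies of each square identified under the antipodal map get the same diagonal, and such that the whole family is invariant under the $\sigma$-action. Since each square pyramid then has a simplicial base, it splits into two tetrahedra, and I would check that across any shared square the two adjacent pyramids induce the same diagonal, so the resulting simplicial complex is well defined and is a genuine subdivision of $C$ with no new vertices. I would then verify that this complex is a combinatorial $3$-manifold (the link of every vertex is a combinatorial $2$-sphere, of every edge a circle) and that it is homeomorphic to $\mathbb{R}P^3$ — the latter is automatic because it is a subdivision of $C$, which is already known to be a (polyhedral) $\mathbb{R}P^3$. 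Finally, because each $B_i$, $B_{ij}$, $B_{ijk}$, $B_{0123}$ is a subcomplex of $C$ (Proposition~\ref{prop:polyhedral}) and subdivision preserves the property of being a subcomplex, the result is an equilibrium triangulation, and by construction it is nice.

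The main obstacle I anticipate is the \emph{coherence of the diagonal choices}: the $24$ squares are glued to each other both within a single $3$-cube (six squares around a cube) and across the antipodal identification, and a careless choice produces a non-simplicial complex (two tetrahedra sharing two triangles, or an edge with a disconnected link) — exactly the kind of pathology that forced subdivisions with extra vertices in the $\mathbb{C}P^3$ and earlier $\mathbb{R}P^3$ constructions. The trick will be to orient the diagonals using a linear functional on $\mathbb{Z}_2^3 \subset \mathbb{R}^3$, e.g. pick a generic linear order on the $8$ torus vertices and in each square join the two $\le$-extreme vertices; since the antipodal map and the coordinate shifts can be made to respect a suitably chosen order (or at worst one checks the finitely many squares by hand), this yields a globally consistent "pulling'' triangulation of every cube boundary. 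One then confirms, by a short finite check on the $12$ vertex stars, that all links are spheres; I expect this check to be routine once the diagonal pattern is pinned down, and I would present the explicit facet list of the $48$ tetrahedra (two per pyramid) together with the $\sigma$- and $\rho$-action to make the verification transparent.
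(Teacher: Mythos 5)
Your overall plan --- subdivide the polyhedral decomposition $C$ of Proposition~\ref{prop:polyhedral} by choosing a diagonal for each of the $12$ squares, split each pyramid into two tetrahedra, and verify the result is a combinatorial manifold --- is exactly the paper's strategy, and your reasoning that being an equilibrium triangulation is preserved under subdivision is also correct. However, there is a genuine gap in the execution: you require the family of diagonals to be invariant under the $\sigma$-action (the cyclic shift $\tilde\sigma = (1247)(36)(0)(5)$ combined with $p_0\mapsto p_1\mapsto p_2\mapsto p_3\mapsto p_0$), and you propose to enforce this via a vertex order compatible with the shifts. The paper shows that this is impossible: no choice of diagonals for the $12$ squares is invariant under $\tilde\sigma$, nor even under $\tilde\sigma^2$ --- there is a short obstruction argument involving the squares $(0\,1\,5\,4)$, $(2\,5\,4\,3)$, $(0\,2\,6\,4)$, $(0\,1\,3\,2)$ that chains together into a contradiction. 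So the ``globally consistent $\sigma$-equivariant pulling triangulation'' you describe cannot exist, and a proof built on it would collapse at precisely the step you flag as the main obstacle.

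The rescue is that Definition~\ref{def:nice} does not ask for any symmetry at all: a nice equilibrium triangulation need only be a simplicial subdivision of $C$ without extra vertices. Once you drop the $\sigma$-invariance requirement, the remaining part of your plan is sound and coincides with the paper's, which simply exhibits an explicit, non-$\sigma$-invariant choice of diagonals and lists the resulting $48$ tetrahedra (with $f$-vector $(12,60,96,48)$), leaving the manifold check as a finite verification. A secondary caveat worth noting about the pulling idea, even as a heuristic: two distinct squares of the quotient cubical complex can share the same pair of opposite vertices (e.g.\ $(0000,1000,1100,0100)$ and $(0000,0001,0011,0010)$ both have $\{[0000],[1100]\}$ as a potential diagonal), so a pulling order that makes the same vertex extremal in both can put the same diagonal into two squares, and then the link of that edge must be checked separately; the linear functional does not by itself guarantee a combinatorial $3$-manifold. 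You correctly anticipate that a finite link check is needed, so this is a caution rather than a further gap.
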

  
\begin{proof}
We start with a cubical Hopf decomposition of the boundary complex
of the $4$-cube $[0,1]^4$ with vertices labelled by their coordinates written in base $10$.
It is based on the representation as a Cartesian product of two squares.
There are two solid tori, each consisting of four $3$-cubes, with a
common boundary, a torus decomposed into $16$ squares by a $(4\times 4)$-grid.
This ``cubical Clifford torus'' in the boundary of the $4$-cube
is depicted in Figure~\ref{fig:fourcube}, and in Figure~\ref{fig:grid} on the left.

\begin{figure}[hbt]
\includegraphics[width=\textwidth]{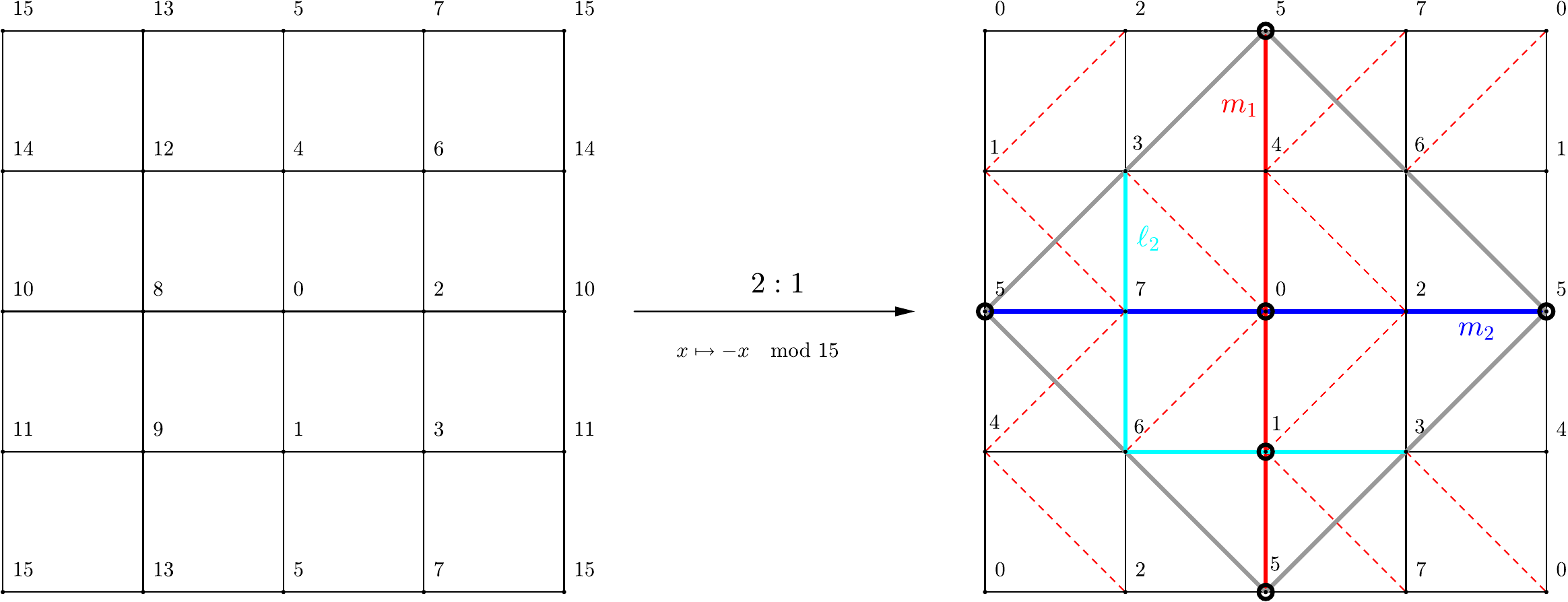}
\caption{Left: Clifford torus in the boundary of the $4$-cube drawn as a $(4\times 4)$-grid in the plane. Right: image of the torus under identification $x \mapsto (-x \mod 15)$ defining a $2:1$ covering. A fundamental domain is indicated in grey. The $8$-square torus is invariant under $\tilde{\sigma} = (1247)(36)(0)(5)$. In red: meridian curve of solid torus on one side of the Clifford torus. In blue and cyan: framing of the solid torus on the other side.\label{fig:grid}}
\end{figure}

We obtain an equilibrium triangulation of $\mathbb{R}P^3$ by antipodal
identification of the boundary of the $4$-cube.
This produces four $3$-cubes with altogether $12$ squares where the squares
require the choice of one diagonal for triangulating them.
This cubical configuration is
invariant under $\tilde{\sigma} = (1247)(36)(0)(5)$ ($\sigma$ from Equation~\ref{eq:sigma} after antipodal identification $\pm$ modulo $15$).

\begin{figure}[p]
\includegraphics[width=.7\textwidth]{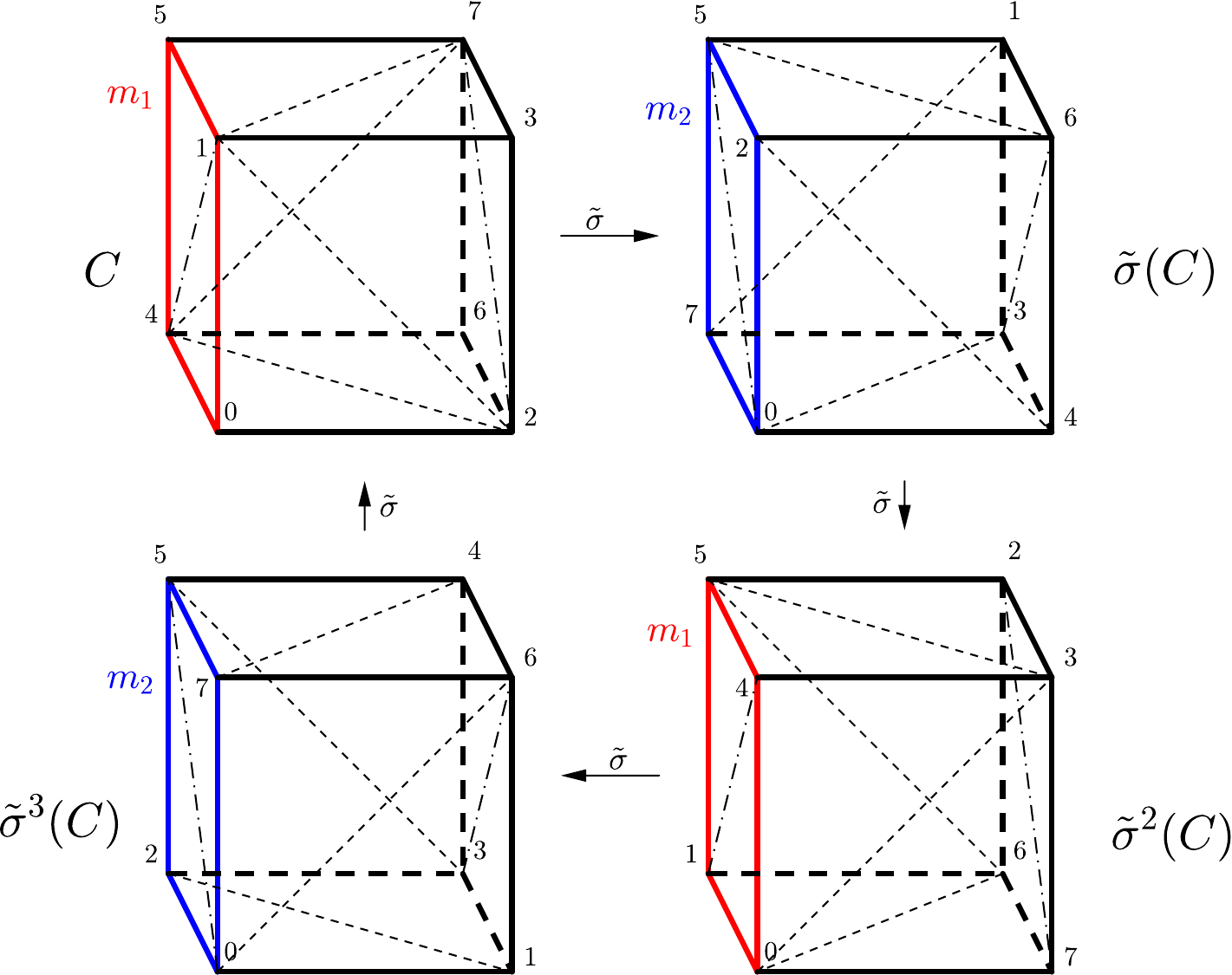}
\caption{The four $3$-cubes of the $12$-vertex equilibrium $\mathbb{R}P^3$ as an orbit of cube $C$ (top left) under $\tilde{\sigma}$. $T_1 = C \cup \tilde{\sigma}^2(C)$, as well as $T_2 = \tilde{\sigma}(C) \cup \tilde{\sigma}^{3}(C)$ are solid tori respectively. Note that $C$ can be triangulated without adding any additional vertices or edges giving rise to an $11$-vertex, and thus vertex-minimal equilibrium triangulation of $\mathbb{R}P^3$. \label{fig:sigma3cubes} \vspace{1cm}}
\end{figure}

\begin{figure}[p]
{\small \begin{tabular}{llllllll}
${\bf  ( 0\ 1\ 4\ p_0 )}$&${\bf ( 2\ 3\ 7\ p_0 )}$&\qquad\qquad&${( 0\ 1\ 2\ p_0 )}$&${( 4\ 5\ 7\ p_0 )}$&\qquad\qquad&${ ( 0\ 2\ 4\ p_0 )}$&${ ( 1\ 3\ 7\ p_0 )}$\\
${\bf  ( 1\ 4\ 5\ p_0 )}$&${\bf ( 2\ 6\ 7\ p_0 )}$&&${( 1\ 2\ 3\ p_0 )}$&${( 4\ 6\ 7\ p_0 )}$&&${ ( 2\ 4\ 6\ p_0 )}$&${ ( 1\ 5\ 7\ p_0 )}$\\
&&&&&&&\\
${( 0\ 2\ 5\ p_1 )}$&${( 1\ 3\ 6\ p_1 )}$&&$( 1\ 5\ 6\ p_1 )$&$( 0\ 3\ 4\ p_1 )$&&${ ( 0\ 2\ 4\ p_1 )}$&${ ( 1\ 3\ 7\ p_1 )}$\\
${( 0\ 5\ 7\ p_1 )}$&${( 3\ 4\ 6\ p_1 )}$&&$( 2\ 5\ 6\ p_1 )$&$( 0\ 3\ 7\ p_1 )$&&${ ( 2\ 4\ 6\ p_1 )}$&${ ( 1\ 5\ 7\ p_1 )}$\\
&&&&&&&\\
${\bf   ( 0\ 1\ 4\ p_2 )}$&${\bf  ( 2\ 3\ 7\ p_2 )}$&&$( 1\ 5\ 6\ p_2 )$&$( 0\ 3\ 4\ p_2 )$&&$\color{gray}{( 0\ 1\ 6\ p_2 )}$&$\color{gray}{( 2\ 3\ 5\ p_2 )}$\\
$ {\bf  ( 1\ 4\ 5\ p_2 )}$&${\bf  ( 2\ 6\ 7\ p_2 )}$&&$( 2\ 5\ 6\ p_2 )$&$( 0\ 3\ 7\ p_2 )$&&$\color{gray}{( 0\ 6\ 7\ p_2 )}$&$\color{gray}{( 3\ 4\ 5\ p_2 )}$\\
&&&&&&&\\
${( 0\ 2\ 5\ p_3 )}$&${( 1\ 3\ 6\ p_3 )}$&&${( 0\ 1\ 2\ p_3 )}$&${( 4\ 5\ 7\ p_3 )}$&&$\color{gray}{( 0\ 1\ 6\ p_3 )}$&$\color{gray}{( 2\ 3\ 5\ p_3 )}$\\
${( 0\ 5\ 7\ p_3 )}$&${( 3\ 4\ 6\ p_3 )}$&&${( 1\ 2\ 3\ p_3 )}$&${( 4\ 6\ 7\ p_3 )}$&&$\color{gray}{( 0\ 6\ 7\ p_3 )}$&$\color{gray}{( 3\ 4\ 5\ p_3 )}$
\end{tabular}}
\caption{The $48$ tetrahedra of the $12$-vertex nice equilibrium triangulation of $\mathbb{R}P^3$ with $12$ vertices sorted by the $3$-cube they are contained in. 
Pairs of columns contain pairs of triangulated opposite squares. \label{tab:rp3}}
\end{figure}

On the level of the ``cubical Clifford torus'', this twofold quotient
produces $8$ squares, as depicted in Figure~\ref{fig:grid} on the right.
They occur in the equilibrium $\mathbb{R}P^3$ as the union
of two orbits of squares under $\tilde{\sigma}$. It is not polyhedral, but
it is the boundary of the union of two of the four $3$-cubes.
The remaining $4$ squares represent the
standard Heegaard diagram belonging to the associated
decomposition of $\mathbb{R}P^3$ into two solid tori,
and Figure~\ref{fig:sigma3cubes} shows
its triangulated version.
This leads to the $f$-vector
\[(8 + 4, 28 + 4 \cdot 8, 4 \cdot 18 +2 \cdot 12,4 \cdot 12) = (12, 60, 96, 48).\]
A list of the $48$ tetrahedra is given in Table~\ref{tab:rp3}.

Unfortunately, a choice of diagonals for the $12$ squares is impossible if we require
an invariance under $\tilde{\sigma}$ or even just under $\tilde{\sigma}^2$.
This is the same problem as discussed above 
for the complex projective $3$-space and its real part:
The diagonal $(0\ 5)$ in the square $(0\ 1\ 5\ 4)$ cannot be replaced by $(1\ 4)$
since then $(2\ 4)$ occurs in two squares $(2\ 5\ 4\ 3)$ and $(0\ 2\ 6\ 4)$ where the diagonal
$(0\ 6)$ in the latter is also impossible if $(0\ 3)$ is a diagonal in $(0\ 1\ 3\ 2)$, see Figure~\ref{fig:grid}.

This construction also appears in \cite{BasakSarkar}, see Figure 7 therein for a picture.
\end{proof}

\begin{rem}
  Note that the construction from Proposition~\ref{prop:equilibriumRP3} is not optimal in the sense that we can save an additional vertex. This can be deduced from looking at cube $C$ depicted in the top-right of Figure~\ref{fig:sigma3cubes}. This cube can be triangulated without adding any vertex or edge. Removing the vertex in the centre of $C$ leads to a vertex-minimal $11$-vertex equilibrium triangulation of $\mathbb{R}P^3$ with $f$-vector $(11, 52, 82, 41)$.
\end{rem}

\medskip

\begin{theorem}[From the 24-cell to an equilibrium
    triangulation of $\mathbb{R}P^3$]
  \label{thm:24cellequilibrium}
  The $12$-vertex equilibrium triangulation of $\mathbb{R}P^3$
  from Proposition~\ref{prop:equilibriumRP3} is a subdivision
  of the $24$-cell under antipodal identification. 
\end{theorem}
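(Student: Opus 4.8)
The plan is to realise the $24$-cell $P$ in coordinates, read off its central symmetry together with its octahedral cells, pass to the quotient $\partial P/(x\sim -x)$ to obtain a polyhedral decomposition $\mathcal{O}$ of $\mathbb{R}P^3$ into $12$ octahedra, and then identify $\mathcal{O}$ cell-by-cell with the polyhedral data underlying Proposition~\ref{prop:equilibriumRP3} (a refinement of the decomposition of Proposition~\ref{prop:polyhedral}), showing that each octahedron of $\mathcal{O}$ is the union of exactly four tetrahedra of the triangulation in question. Concretely, I would take for $P$ the convex hull in $\mathbb{R}^4$ of the eight points $\pm 2e_i$ (the vertices of a $16$-cell) together with the sixteen points $(\pm1,\pm1,\pm1,\pm1)$ (the vertices of a tesseract); its $24$ octahedral facets are indexed by an unordered pair $\{i,j\}$ and a sign $\varepsilon=(\varepsilon_i,\varepsilon_j)$, the cell $O^{\varepsilon}_{ij}$ having the two \emph{axis} vertices $2\varepsilon_i e_i$, $2\varepsilon_j e_j$ and the four \emph{cube} vertices $(\pm1,\pm1,\pm1,\pm1)$ whose $i$-th and $j$-th coordinates equal $\varepsilon_i,\varepsilon_j$, the two axis vertices and the two pairs of cube vertices forming the three axes of the octahedron. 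The antipodal map sends $O^\varepsilon_{ij}$ to $O^{-\varepsilon}_{ij}$, and since any face of $P$ and its antipode have disjoint vertex sets and hence are disjoint, $\mathcal{O}=\partial P/(x\sim -x)$ is a genuine polyhedral complex on $\mathbb{R}P^3$ with $12$ vertices ($4$ axis-classes $P_0,\dots,P_3$ and $8$ cube-classes), $48$ edges, $48$ triangles and $12$ octahedral cells.

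\medskip

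Next I would line this up with the construction of Proposition~\ref{prop:equilibriumRP3}, where $\mathbb{R}P^3=\partial([0,1]^4)/(x\sim -x)$: its four facet-classes are the solid $3$-cubes $\bar F_0,\dots,\bar F_3$ (indexed by the coordinate held constant), its twelve ridge-classes are twelve squares, and each square is shared by exactly two of the $\bar F_m$. The triangulation is obtained by adding the centre $p_m$ of $\bar F_m$, coning each of the six square faces of $\bar F_m$ to $p_m$, and cutting each resulting square pyramid into two tetrahedra along the chosen diagonal of the square. Now fix a square $S$ shared by $\bar F_m$ and $\bar F_{m'}$; the pyramids $\operatorname{pyr}(S,p_m)$ and $\operatorname{pyr}(S,p_{m'})$ glue along $S$ into a bipyramid $\widetilde O_S$ with equator $S$ and poles $p_m,p_{m'}$. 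Under the affine identification $\{0,1\}\leftrightarrow\{-1,+1\}$ of coordinates, $S$ is precisely the ridge ``$x_i=\varepsilon_i$, $x_j=\varepsilon_j$'' for some pair $\{i,j\}$ and sign $\varepsilon$, so the four vertices of $S$ are exactly the four cube vertices of $O^\varepsilon_{ij}$, while $p_m$ and $p_{m'}$ — the centres of the facets $x_i=\varepsilon_i$ and $x_j=\varepsilon_j$ — correspond to the two axis-classes $[2e_i]$ and $[2e_j]$. Thus $S\mapsto \widetilde O_S$ is a bijection from the twelve ridge-classes onto the twelve cells of $\mathcal{O}$, with $\widetilde O_S=O^\varepsilon_{ij}$ as a polyhedron.

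\medskip

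Finally I would observe that, inside $\widetilde O_S$, the four tetrahedra of the triangulation — the two halves of $\operatorname{pyr}(S,p_m)$ and the two halves of $\operatorname{pyr}(S,p_{m'})$ — are precisely the four simplices of the standard subdivision of the octahedron $\widetilde O_S$ along a main diagonal. Indeed, writing $S=(d_1\,s_1\,d_2\,s_2)$ cyclically with chosen diagonal $d=d_1d_2$, the segment $d$ joins two opposite vertices of the bipyramid $\widetilde O_S$, hence is one of its main diagonals, and coning $S$ (cut along $d$ into the triangles $d_1s_1d_2$ and $d_1d_2s_2$) to each of the two poles produces exactly the four tetrahedra $\{d_1,d_2,s,p\}$ with $s\in\{s_1,s_2\}$ and $p\in\{p_m,p_{m'}\}$. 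Summing over the twelve octahedra, the $48$ tetrahedra of the triangulation refine the $12$ octahedral cells of $\mathcal{O}$, so the triangulation is a subdivision of the $24$-cell under antipodal identification; note that this argument does not depend on which of the two diagonals of each square was chosen, which is also why no $\tilde\sigma$-invariant choice is needed here. As a consistency check, subdividing each cell of $\mathcal{O}$, whose $f$-vector is $(12,48,48,12)$, along a main diagonal adds one edge and four triangles per cell and turns each octahedron into four tetrahedra, recovering the $f$-vector $(12,60,96,48)$ of Proposition~\ref{prop:equilibriumRP3}.

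\medskip

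I expect the main obstacle to be the bookkeeping in the second paragraph above: one must verify in explicit coordinates that the incidence pattern ``$\bar F_m$ and $\bar F_{m'}$ share the square $S$'' is matched exactly — with the right pair $\{i,j\}$, the right four cube vertices, and the right signs — by the cell $O^\varepsilon_{ij}$ of $P$, and not merely up to the obvious symmetries; everything else (that $\mathcal{O}$ is a genuine polyhedral complex, and that the main-diagonal subdivision step behaves as claimed) is routine once this dictionary between the facet structure of the $4$-cube and the vertex/cell structure of the $24$-cell is fixed.
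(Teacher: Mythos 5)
Your proposal is correct, but it goes in the opposite direction from the paper's proof. Both arguments hinge on the same key observation, namely that the two pyramids over a given square glue into a bipyramid, and this is still a combinatorial octahedron after the square is cut by a diagonal, so the $48$ tetrahedra organise into $24$ pyramids and then $12$ octahedra. After that the two proofs diverge. The paper argues ``upward'' from the abstract complex: it records the $12$-octahedron decomposition, asserts that ``by the regularity of this (abstract) complex'' (i.e.\ that every vertex link is the boundary of a $3$-cube, giving Schl\"afli type $\{3,4,3\}$) its universal cover must be the boundary complex of the $24$-cell, and concludes. This is a four-sentence argument that leaves the recognition-by-regularity step unexpanded. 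You instead argue ``downward'' from an explicit model: you realise the $24$-cell as $\operatorname{conv}(\{\pm 2e_i\}\cup\{\pm1\}^4)$, pass to the antipodal quotient $\mathcal{O}$, set up a concrete dictionary (axis vertex classes $\leftrightarrow$ facet centres $p_m$, cube vertex classes $\leftrightarrow$ cube vertices, octahedral cell $O^{\varepsilon}_{ij}\leftrightarrow$ the bipyramid over the ridge $x_i=\varepsilon_i,\,x_j=\varepsilon_j$), and check that the four tetrahedra inside each bipyramid are exactly the main-diagonal subdivision. Your route is longer but entirely self-contained: it needs no appeal to the classification of regular tessellations or to universal covers, and the $f$-vector check $(12,48,48,12)\rightsquigarrow(12,60,96,48)$ confirms the bookkeeping. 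The paper's route is shorter but implicitly delegates the identification of the universal cover to the theory of regular polytopes (a point it does spell out, in the analogous but distinct situation, in the discussion preceding Theorem~\ref{tightRP3}). Your closing remark that the choice of diagonal is immaterial here is also a good observation and is consistent with the paper's emphasis that no $\tilde\sigma$-invariant choice exists.
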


\begin{proof}
Two pyramids over the same square can be regarded as two halfs
of an octahedron. This is still true if the square is triangulated
by a diagonal. In this way the 48 tetrahedra of an equilibrium $\mathbb{R}P^3$
can be regarded as 24 pyramids and as 12 (abstract) octahedra. By the regularity
of this (abstract) complex its universal covering must coincide
with the 24-cell. In particular, this equilibrium triangulation
can be obtained from the boundary complex of a convex 4-polytope by an appropriate subdivision.
\end{proof}

\medskip

\noindent
{\bf A nice equilibrium triangulation of $\mathbb{R}P^4$.}

For $k=4$ the situation is more complicated: We must triangulate
$20$ cubes with $40$ squares in a coherent way. 
In short, our approach is to choose a triangulation of each $3$-cube and fix incoherently
triangulated squares of adjacent $3$-cubes by introducing an additional
flat tetrahedra acting as an ``adaptor''. This leads to
the following theorem.

\begin{thm}
  \label{thm:nicerp4}
  There is a nice equilibrium triangulation of $\mathbb{R}P^4$ (with $21$ vertices).
\end{thm}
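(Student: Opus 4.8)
The plan is to mimic the construction of the $12$-vertex nice equilibrium triangulation of $\mathbb{R}P^3$ from Proposition~\ref{prop:equilibriumRP3}, one dimension up. First I would start with the cubical Hopf decomposition of the boundary of the $5$-cube $[0,1]^5$, using the Cartesian-product structure $\mathbb{R}^5 = \mathbb{R}^1 \times \mathbb{R}^4$ (or, more symmetrically, the decomposition into $5$ pairs of opposite facets), and pass to $\mathbb{R}P^4$ by antipodal identification. This produces $5$ solid $4$-tori of type $B^4 \times S^0$ glued along a common boundary, and, after coning each of the $2\cdot 5 = 10$ three-dimensional cubical facets of each of the $5$ remaining $4$-cubes to a central vertex (as in Proposition~\ref{prop:polyhedral}), a polyhedral decomposition $C$ of $\mathbb{R}P^4$ with $2^4 + 4 + 1 = 21$ vertices into $20$ pyramids over the $3$-cube. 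By Definition~\ref{def:nice}, it then suffices to exhibit a simplicial subdivision of $C$ that adds no vertices; the equilibrium-subcomplex condition is automatic once the triangulation refines $C$, since the subsets $B_i, B_{ij}, \ldots$ are already polyhedral subcomplexes of $C$.

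The combinatorial core is then purely about the $2$-skeleton: each pyramid over a $3$-cube must be triangulated compatibly with a chosen triangulation of each of its six square $2$-faces, and squares shared between two pyramids (in the same or in adjacent coned $4$-cubes) must receive the same diagonal from both sides. So the first real step is a global diagonal-assignment problem on the $40$ squares of the cubical Clifford-torus-type structure in $\partial[0,1]^5$, descended to $\mathbb{R}P^4$. As the proof of Proposition~\ref{prop:equilibriumRP3} already notes, a fully coherent diagonal choice invariant under the coordinate-shift automorphism is obstructed; the idea stated in the theorem's preamble is to \emph{not} insist on coherence everywhere, but instead, whenever two adjacent $3$-cubes force incompatible diagonals on their shared square, to insert a single extra ``flat'' (degenerate) tetrahedron spanning the two diagonals as an adaptor, and to triangulate the two surrounding pyramids so that they meet this adaptor consistently. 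Concretely: triangulate each $3$-cube-pyramid by a standard scheme (e.g. cone the chosen triangulation of each square face to the cube centre, then cone to $p_i$), record the parity/orientation of the induced diagonal on each square, and for each ``bad'' square replace the two back-to-back square-pyramids by a fan of five tetrahedra around the edge joining the two extra vertices sitting on the two competing diagonals — exactly the $5$-tetrahedron replacement used in the $\mathbb{C}P^3$ and $\mathbb{R}P^3$ repairs above.

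Having produced such a complex, the remaining steps are verification, not construction: (i) check that the resulting abstract simplicial complex is a combinatorial triangulation of $\mathbb{R}P^4$ — that every face link is a PL sphere of the right dimension — which reduces to checking the links of the edges, triangles, and tetrahedra affected by the adaptors, since all other links are inherited from the polyhedral subdivision of $C$; (ii) check that it is a subdivision of $C$ with vertex set exactly the $21$ vertices of $C$ (i.e. that no adaptor introduces a new vertex — the adaptor edge joins two vertices already placed on square diagonals during the standard triangulation, so this holds); (iii) confirm $\pi_1 \cong \mathbb{Z}/2$ and the homology/Euler characteristic of $\mathbb{R}P^4$, or simply cite that the underlying polyhedron is $\mathbb{R}P^4$ by construction together with PL invariance of the subdivision. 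The main obstacle I expect is bookkeeping in step (i): ensuring the $40$ adaptor choices across the $20$ pyramids are mutually consistent — that an adaptor inserted to fix one shared square does not spoil the link of an edge or triangle shared with a \emph{third} pyramid — so that all edge and triangle links remain single cycles and single disks respectively. This is the higher-dimensional analogue of the diagonal-propagation obstruction in Figure~\ref{fig:grid}, and is best handled by an explicit list of the $3$-cubes as an orbit under the coordinate-shift symmetry (as in Figure~\ref{fig:sigma3cubes} and Table~\ref{tab:rp3} for $k=3$), followed by a direct — ideally computer-assisted, via \texttt{simpcomp} \cite{simpcomp} — verification of all links.
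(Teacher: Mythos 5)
You correctly identify the skeleton of the construction (cubical decomposition of $\mathbb{R}P^4$ from the $5$-cube, triangulate the $3$-skeleton, add a cone point in each $4$-cube, repair incoherent diagonals with ``flat'' tetrahedra), but the concrete execution you propose would badly overshoot the $21$-vertex budget and hence not give a \emph{nice} equilibrium triangulation in the sense of Definition~\ref{def:nice}. Two specific problems. First, you triangulate each $3$-cube by ``coning the chosen triangulation of each square face to the cube centre, then coning to $p_i$''; this inserts a new vertex at the centre of each of the twenty $3$-cubes, so already $16 + 20 + 5 = 41$ vertices, not $21$. The paper instead uses one of the two mirror-symmetric five-tetrahedron triangulations of the $3$-cube \emph{without} a main diagonal and \emph{without} any interior vertex (cf.~\cite{PVR}), so the $3$-skeleton is triangulated on the original $16$ vertices alone; the only new vertices are the five cone points. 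Second, your adaptor is the $5$-tetrahedron repair imported from the $\mathbb{C}P^3$/$\mathbb{R}P^3$ discussion, which joins ``two extra vertices sitting on the two competing diagonals'' --- i.e.\ it stellarly subdivides and adds vertices. That is precisely what the nice condition forbids. The paper's adaptor is a single combinatorially ordinary tetrahedron on the \emph{same four vertices} as the offending square (hence ``flat'' in any geometric realization of the cubical complex), adding no vertices at all. You also miscount: Proposition~\ref{prop:polyhedral} gives $2k(k+1)=40$ pyramids for $k=4$, not $20$, and each square lies in \emph{three} $3$-cubes (not two), which is why the paper's case analysis (``all three agree'' versus ``two of three agree'') and its count of exactly $20$ inconsistent squares --- shown to be minimal by a search over all $2^{20}$ mirror choices --- are essential and are missing from your plan. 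Without those ingredients the vertex count does not close and the theorem is not proved.
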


\begin{proof}
We start with the boundary complex of a $5$-cube with $f$-vector
$(32, 80, 80, 40, 10)$. After identification of antipodal points we obtain
a cubical decomposition of $\mathbb{R}P^4$ with the $f$-vector
$(16, 40, 40, 20, 5)$. Any two of the five $4$-cubes intersect in
a pair of opposite $3$-cubes.

\medskip
The idea of the construction is to triangulate
the $3$-skeleton of this complex and then to add one extra vertex in each of the
five $4$-cubes. According to \cite{PVR} there are several triangulations
of a $3$-cube. We choose the one with five tetrahedra and without
a main diagonal, which exists in two mirror symmetric
versions.

It follows that the $40$ squares inherit choices of their
diagonals. However, these are not necessarily coherent.
Any square is a face of three distinct $3$-cubes. If for a square
all three induced diagonals coincide, no further action is necessary.
Otherwise, two of the three diagonals coincide and a third one does not.
In this case we must insert a ``flat'' tetrahedron with the same four
vertices as this square which adapts the two contradicting diagonals
to one another. Combinatorially, these tetrahedra are not flat but ordinary.
However, they can never arise from an embedding of the cubical
complex into any euclidean space.

Choosing the first of the two mirror symmetric versions for all $20$ $3$-cubes
results in exactly $20$ incoherently triangulated squares.
Iterating over all $2^{20}$ possible choices of mirror symmetric triangulations of the 
cubes reveals that this is the smallest possible number.

Introducing $20$ additional ``flat'' tetrahedra to fix these inconsistencies, 
the $f$-vector of the triangulated 3-dimensional cubical complex is $(16,100,200,120)$,
and the final $f$-vector of the triangulated $\mathbb{R}P^4$, obtained by coning over the triangulated boundaries of the $4$-cubes, is
$(21, 180, 520, 600, 240)$.

\medskip
A list of facets for this triangulation is given in Appendix~\ref{app:rp4}.
\end{proof}

\medskip
The 5-cube $C^5$ can be regarded as a Cartesian product of a square and a cube:
$C^5 = C^2 \times C^3$. Therefore its boundary decomposes into the two
parts $\partial C^2 \times C^3$ and $C^2 \times \partial C^3$, separated
by the common hypersurface
$\partial C^2 \times \partial C^3 \cong S^1 \times S^2$.
After identification of opposite points we obtain a hypersurface
of type $(S^1 \times S^2)/\pm $ which is the total space of the
twisted 2-sphere bundle over $S^1$,
also called {\it $3$-dimensional Klein bottle}. In differential geometry
it arises as the tensor product $S^1 \otimes S^2 \subset \mathbb{R}^6$,
a tight and taut submanifold \cite[Cor.5.6]{Ku1}, in combinatorial topology
it occurs as the unique non-spherical 3-manifold that admits a
(simplicial) triangulation with 9 vertices \cite{Lu}. This unique
triangulation can
be represented as the permcycle {\bf 1~1~2~5} \cite {Ku-La-permcycle}.

\medskip
In the cubical $\mathbb{R}P^4$ this hypersurface
arises as the boundary
of the union of any two 4-cubes among the five 4-cubes. They intersect
in two opposite 3-cubes, therefore the 3-dimensional Klein bottle
is decomposed into 12 cubes.
These split into $6 + 6$ cubes with an ordinary Klein bottle in between,
depicted in Figure~\ref{fig:KleinBottle} in the triangulated version.

\begin{figure}
\includegraphics[width=.4\textwidth]{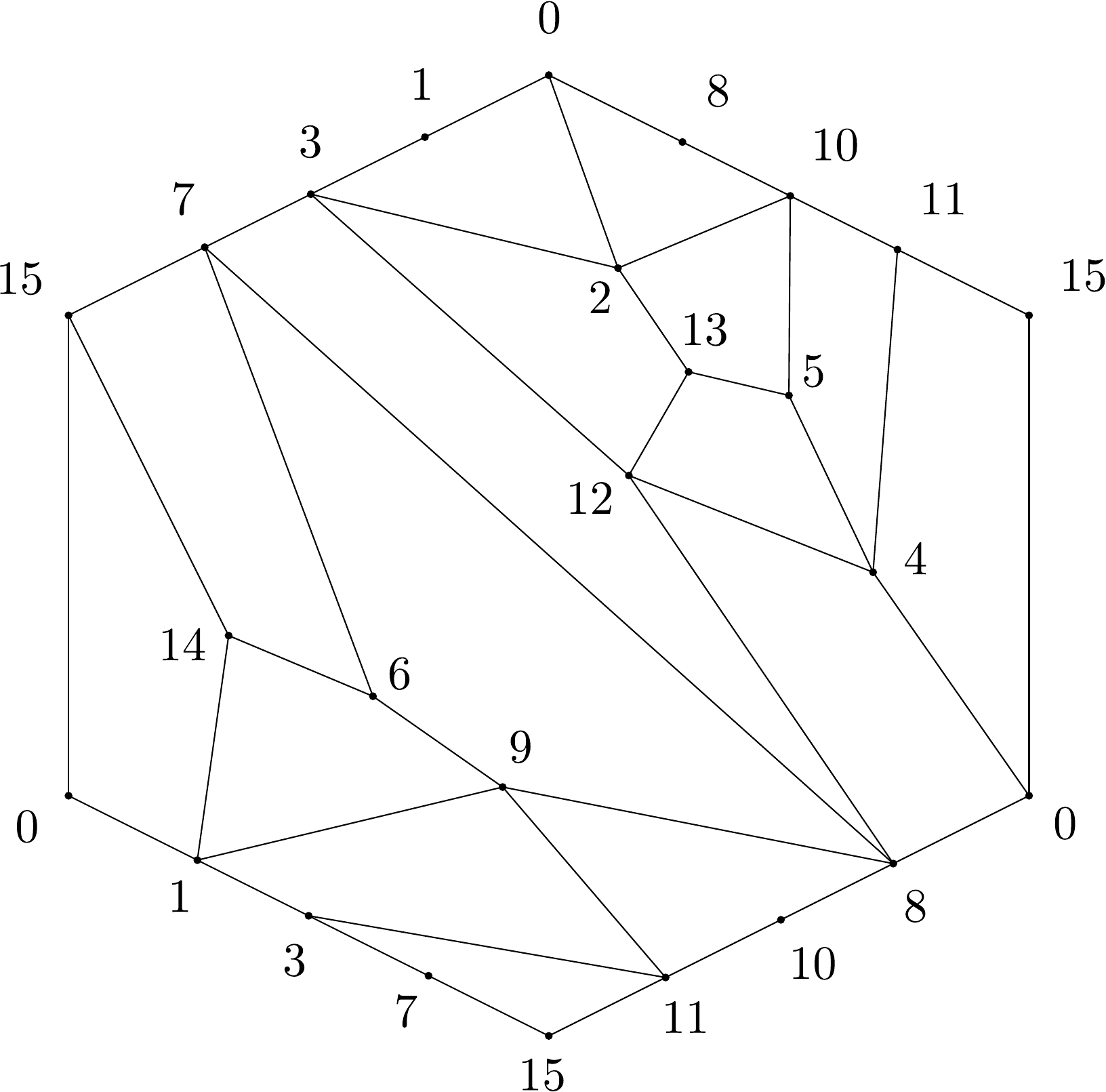}
\caption{Klein bottle bounding the union of six (triangulated) cubes in $\mathbb{R}P^4$. The complex has $16$ vertices and $16$ quadrilaterals. \label{fig:KleinBottle}}
\end{figure}

\begin{cor}
  The equilibrium triangulation of $\mathbb{R}P^{4}$ contains a $3$-dimensional Klein bottle
  as a $3$-dimensional subcomplex together
  with the total space of a disc bundle on either side over
  $\mathbb{R}P^{1}$ and $\mathbb{R}P^{2}$, respectively.
\end{cor}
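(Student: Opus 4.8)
The plan is to unwind the Cartesian product structure $C^5 = C^2 \times C^3$ that was already set up in the paragraph preceding the corollary, and track what each factor becomes after antipodal identification. First I would note that the boundary $\partial C^5$ splits as $(\partial C^2 \times C^3) \cup (C^2 \times \partial C^3)$, glued along $\partial C^2 \times \partial C^3 \cong S^1 \times S^2$. Under the antipodal involution on $\partial C^5$, the two pieces are each preserved (the involution respects the product splitting since it acts diagonally), so we get a decomposition of $\mathbb{R}P^4$ into two pieces $(\partial C^2 \times C^3)/\pm$ and $(C^2 \times \partial C^3)/\pm$, separated by $(\partial C^2 \times \partial C^3)/\pm = (S^1 \times S^2)/\pm$, the $3$-dimensional Klein bottle identified in the text. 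This is the ``$3$-dimensional subcomplex'' claimed; it is a genuine subcomplex of the equilibrium triangulation since the cubical hypersurface is a union of $12$ cubes which are subsequently triangulated (and fixed up with the flat adaptor tetrahedra) within the construction of Theorem~\ref{thm:nicerp4}.

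Next I would identify the two sides. On the side $(\partial C^2 \times C^3)/\pm$: the antipodal map on $\partial C^2$ (a $4$-cycle, i.e.\ $S^1$) has quotient again an $S^1$ (a double cover of the circle), acting freely, so this piece is the total space of a $C^3$-bundle, i.e.\ a $B^3$-bundle, over $S^1 = \mathbb{R}P^1$. On the other side $(C^2 \times \partial C^3)/\pm$: the antipodal map on $\partial C^3 \cong S^2$ is the usual antipodal map with quotient $\mathbb{R}P^2$, and it acts freely, so this piece is the total space of a $C^2$-bundle, i.e.\ a $B^2$-bundle (a disc bundle), over $\mathbb{R}P^2$. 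Both quotients are disc bundles because in each case the involution on the fibre factor is trivial (the antipodal map only moves the base factor), so the bundle projections $\partial C^2 \times C^3 \to \partial C^2$ and $C^2 \times \partial C^3 \to \partial C^3$ are equivariant and descend. One should double-check that the disc-bundle-over-$\mathbb{R}P^1$ claim is consistent with the ``disc bundle over $\mathbb{R}P^1$'' wording: $\mathbb{R}P^1 = S^1$, and the relevant bundle is the solid-torus-or-solid-Klein-bottle depending on the twist, which is determined by tracing how the orientation of $C^3$ is carried around the doubled circle.

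The main obstacle I expect is not the topology but the bookkeeping that the cubical hypersurface and the two bounding pieces really survive as \emph{simplicial} subcomplexes of the triangulation produced in Theorem~\ref{thm:nicerp4}, rather than merely as subsets of the cubical decomposition. The triangulation there chooses, for each $3$-cube, one of the two mirror-symmetric $5$-tetrahedron triangulations, and patches incoherent squares with flat adaptor tetrahedra; I would need to observe that the $12$ cubes comprising the Klein-bottle hypersurface, together with the $6+6$ cubes on each side, are triangulated compatibly by this same global choice, so that their union is a full subcomplex. This is essentially automatic from the construction (the triangulation is defined cube-by-cube on the entire cubical $3$-skeleton, and coning to the five extra vertices is done cube-by-cube on the $4$-cubes), but it is the step that requires care, and it is illustrated concretely by Figure~\ref{fig:KleinBottle}, which exhibits the triangulated Klein bottle with its stated $16$ vertices and $16$ quadrilaterals. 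With that in hand the corollary follows by assembling the three identifications above.
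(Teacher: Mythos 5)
Your approach is the same one the paper implicitly uses: the Cartesian splitting $C^5 = C^2 \times C^3$, the resulting decomposition of $\partial C^5$ into $\partial C^2 \times C^3$ and $C^2 \times \partial C^3$ glued along $S^1\times S^2$, and the observation that the antipodal involution respects this splitting so that the three pieces descend to subcomplexes of the nice equilibrium triangulation of $\mathbb{R}P^4$. The identification of the quotients as disc bundles over $\mathbb{R}P^1$ and $\mathbb{R}P^2$ is correct, as is the remark that the cube-by-cube construction in Theorem~\ref{thm:nicerp4} (triangulate $3$-cubes, insert flat adaptors, cone over each $4$-cube) guarantees these cubical pieces survive as honest simplicial subcomplexes.

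One step is misjustified, though, and the misjustification actually contradicts something you say a few lines later. You claim the quotients are disc bundles \emph{because} ``the involution on the fibre factor is trivial (the antipodal map only moves the base factor).'' That is false: the antipodal map on $\partial C^5$ is $x\mapsto -x$, which under $\mathbb{R}^5 = \mathbb{R}^2\times\mathbb{R}^3$ acts as $(a,b)\mapsto(-a,-b)$, hence nontrivially on \emph{both} factors. Indeed, you implicitly acknowledge this yourself when you note that the $B^3$-bundle over $S^1$ could be a solid torus or a solid Klein bottle ``depending on the twist, which is determined by tracing how the orientation of $C^3$ is carried around the doubled circle'' --- a twist that only exists because the involution \emph{does} act on the fibre. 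The correct reason the quotient is a disc bundle is simply that the diagonal $\mathbb{Z}_2$-action is \emph{free on the base factor} ($\partial C^2 \cong S^1$ and $\partial C^3 \cong S^2$ respectively), so $(\text{base}\times\text{fibre})/\mathbb{Z}_2$ is the associated bundle over the quotient base with fibre $C^3$ (resp. $C^2$); freeness, not triviality on the fibre, is what makes the bundle projection descend. With that correction the argument is complete; in fact the $a\mapsto -a$ involution on $C^3$ reverses orientation, so the bundle over $\mathbb{R}P^1$ is the twisted (solid Klein bottle) one, which is exactly what one expects to bound the $3$-dimensional Klein bottle.
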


\medskip

\noindent
    {\bf The case $k=5$:}
    We have the polyhedral decomposition into 60 pyramids over
    a 4-cube with altogether $2^5 + 6 = 38$ vertices. A coherent
    triangulation of all these $4$-cubes without additional vertices
    seems to be a difficult task, and adapting two
    distinct triangulations of a $3$-cube to one another is much more difficult than in the $4$-dimensional case.
However, one can still adapt triangulations of squares, and after
triangulating all 3-cubes one can add a vertex at the centre of each 4-cube --
in addition to the ones at the centre of each 5-cube.
This would lead to an equilibrium triangulation of $\mathbb{R}P^5$ with
$2^5 + 6 + 30 = 68$ vertices. If some of the $4$-cubes
can be coherently triangulated, this number reduces.

\medskip

\noindent
{\bf The general case.}
In the context of isoparametric hypersurfaces in spheres, the Cartesian products
$S^k \times S^l$ occur as hypersurfaces in the $(k+l+1)$-sphere
sitting half way between the two focal sets $S^k$ and $S^l$.
After identification of antipodal points we obtain a similar 
decomposition for $\mathbb{R}P^{k+l+1}$. In this case the
hypersurface is $S^k \times S^l / \pm$ as the total space of
an $S^l$-bundle over $\mathbb{R}P^{k}$ on one side, and as an $S^k$-bundle
over $\mathbb{R}P^{l}$ on the other.
Our cubical decomposition provides cubical analogues, and their triangulations
provide simplicial analogues.

\begin{que}
  Is there a nice equilibrium triangulation of $\mathbb{R}P^k$
for any $k$? This was claimed in \cite[Theorem 3.24]{BasakSarkar},
but the idea of the proof explained there requires that
the $(k-1)$-dimensional complex
to be triangulated is embeddable into some euclidean space. This does
not seem to be true. 

Without such an embedding
the main difficulty is to find a coherent triangulation of all
these $(k-1)$-cubes. Symmetry induced by the
automorphism $\sigma$ cannot help here, and
an induction from $k$ to $k+1$ is not obvious. 
Moreover, in higher dimensions there are very large numbers of
distinct triangulations of cubes providing an enormous number
of potential solutions to investigate.
\end{que}

\section{From the 24-cell to a tight polyhedral embedding
of $\mathbb{R}P^3$}
\label{sec:tight}

An embedding  $f \colon M \rightarrow \mathbb{E}^d$ of a compact manifold 
$M$ into euclidean space is called {\it tight}, if for any open or closed 
half-space $E_+ \subset \mathbb{E}^d$ the induced homomorphism
\[H_*(f^{-1}(E_+);\mathbb{F}) \longrightarrow H_*(M;\mathbb{F})\]
is injective for some field $\mathbb{F}$,
where $H_*$ denotes the ordinary singular homology \cite{Ku1}. 
For $\mathbb{R}P^k$, due to its $2$-torsion, an embedding can only be
tight if $\mathbb{F} = \mathbb{Z}_2$ is the field with $2$ elements,
for $\mathbb{C}P^k$ we can choose $\mathbb{F}$ freely (or even work with 
integral homology). 

For smooth and polyhedral submanifolds
an equivalent formulation of a tight embedding is that any ``height function''
(think of regular simplexwise linear functions in the sense of \cite{Ku1}) 
has the minimum number of critical points, counted with multiplicity,
and that number coincides with the sum of all Betti numbers \cite[3.15]{Ku1}.

\medskip

When talking about tight embeddings of polyhedral structures -- such as triangulations --
it makes sense to consider (polyhedral) embeddings into the boundary complex of 
an ambient polytope. Every intersection of the triangulation with a half-space
has the homotopy type of a subcomplex of the triangulation induced by some 
subset of its vertices. At the same time, the coordinates of the 
vertices of the polytope determine, which subsets of vertices correspond 
to a half-space. This provides us with a finite list of half-spaces to 
check in order to determine tightness of an embedding. 

This framework can then also be used to determine that {\em all} possible 
polyhedral embeddings of a given triangulation into a given polytope must be
tight, as is the case below. Naturally, this condition is more restrictive if more
vertex subsets correspond to half-spaces. It is thus strongest when the ambient polytope
is the simplex of dimension one less the number of vertices of the triangulation.
Accordingly, an $n$-vertex triangulation is said to be {\em tight}, if tightness for an embedding 
into the $(n-1)$-simplex can be shown.

Recently, a condition of a triangulated manifold equivalent to tightness
was found in purely algebraic terms of the Stanley-Reisner ring
\cite{IK}.

\medskip

For $\mathbb{R}P^2$ and $\mathbb{C}P^2$ there are tight embeddings
based on tight triangulations of these manifolds \cite[1.4]{Ku1}:
By definition, given an $n$-vertex tight triangulation, its maximum codimension embedding 
into the skeleton of an $(n-1)$-dimensional simplex is tight.
 No tight triangulation of $\mathbb{R}P^3$ exists \cite[Thm.5.3]{Ku1}.
The smallest equilibrium triangulations of $\mathbb{R}P^2$ ($7$ vertices) 
and $\mathbb{C}P^2$ ($10$ vertices) are not tight triangulations.
Nonetheless they also lead to tight embeddings into spaces of smaller codimension,
namely $4$-space ($\mathbb{R}P^2$) and $7$-space ($\mathbb{C}P^2$) \cite{BK}.
To date, no tight polyhedral embeddings of $\mathbb{R}P^3$ or $\mathbb{C}P^3$ 
are known\footnote{The remark in a footnote in \cite[p.120]{Kui}
was based on an error. This will now be repaired by
Theorem~\ref{tightRP3}.}. 

In this section we present a tight embedding of a triangulation of
$\mathbb{R}P^3$ into $6$-space.
Unfortunately, the equilibrium triangulations of $\mathbb{R}P^3$ from 
Section~\ref{sec:equilreal} are not suitable for a tight embedding.
Instead, we work with a very symmetric $12$-vertex triangulation of 
$\mathbb{R}P^3$. Incidentally, it arises from a centrally-symmetric 
simplicial subdivision of the 
$24$-cell by adding diagonals of octahedra, just like the 
equilibrium triangulation of $\mathbb{R}P^3$ from Proposition~\ref{prop:equilibriumRP3},
cf. Theorem~\ref{thm:24cellequilibrium}.

\begin{lem}[F.H.Lutz \cite{Lu}]
\label{RP3}

There exists a triangulation of $\mathbb{R}P^3$ with $12$ vertices,
denoted by $P_{12}$, admitting a vertex transitive automorphism group
of order $24$. The group is generated by the permutations
\begin{eqnarray*}
T &=& (1 \ 3 \ 5 \ 7 \ 9 \ 11)(2 \ 4 \ 6 \ 8 \ 10 \ 12)\\
R &=& (1 \ 10)(2 \ 5)(3 \ 12)(4 \ 7)(6 \ 9)(8 \ 11)\\
S &=& (1 \ 11)(2 \ 10)(3 \ 9)(4 \ 8)(5 \ 7)(6)(12),
\end{eqnarray*}
and $P_{12}$ is generated by the three tetrahedra
$(1 \ 2 \ 3 \ 5), \ (1 \ 2 \ 5 \ 10), \ (1 \ 2 \ 10 \ 11)$ under action of this group. 
The first one generates an orbit
of $24$ tetrahedra, the others orbits contain $12$ tetrahedra each.
Altogether, the $f$-vector of $P_{12}$ is $(12, 60, 96, 48)$. 
The six missing edges are 
\[(1 \ 7), \ (2 \ 8), \ (3 \ 9), \ (4 \ 10), \ (5 \ 11), \ (6 \ 12),\] 
thus the permutation $T^3$ can be interpreted as a fixed-point free involution.
\end{lem}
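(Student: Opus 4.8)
The statement is finite and combinatorial, so the plan is to verify it in three stages. First I would pin down the automorphism group: a direct computation shows that $T$ has order $6$, that $R$ and $S$ are involutions, and that $G=\langle T,R,S\rangle$ acts transitively on $\{1,\dots,12\}$ with a two-element point stabiliser, so $|G|=24$; in particular $T^{3}=(1\ 7)(2\ 8)(3\ 9)(4\ 10)(5\ 11)(6\ 12)$ is the asserted fixed-point-free involution. Applying $G$ to the three seed tetrahedra $(1\ 2\ 3\ 5)$, $(1\ 2\ 5\ 10)$, $(1\ 2\ 10\ 11)$ and recording their orbits yields orbit sizes $24$, $12$, $12$ (one checks that the two latter tetrahedra have a stabiliser of order $2$, the first a trivial one, and that the three orbits are disjoint), hence $48$ facets; extracting all subfaces then gives $12$ vertices, $60$ edges — the six absent pairs being exactly $\{i,T^{3}(i)\}$ for $1\le i\le 6$ — and $96$ triangles, so the $f$-vector is $(12,60,96,48)$ and $\chi(P_{12})=0$.

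Second, to see that $P_{12}$ triangulates a closed PL $3$-manifold it suffices, by vertex-transitivity of $G$, to verify that $\mathrm{lk}_{P_{12}}(1)$ is a triangulated $2$-sphere — a finite check (it is a pure strongly-connected $2$-pseudomanifold, its vertex links are single cycles, and $\chi=2$). Indeed, since every triangulated $2$-sphere is PL-standard, this forces every link in $P_{12}$ to be a standard sphere of the appropriate dimension: the link of an edge $\{u,v\}$ is the vertex link of $u$ in the $2$-sphere $\mathrm{lk}(v)$, hence a circle, and the link of a triangle is an $S^{0}$. The pseudomanifold property — each $2$-face in exactly two facets — is confirmed along the way on $G$-orbit representatives and is consistent with $f_{2}=2f_{3}$.

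Third — and this is where the real work lies — I would identify $|P_{12}|$ with $\mathbb{R}P^{3}$. The cleanest route uses the description recorded just above the statement: $P_{12}$ is obtained from a centrally-symmetric simplicial subdivision $\widetilde{P}$ of the boundary of the $24$-cell — its octahedral facets cut by antipodally-matched diagonals — upon identifying antipodal points, cf.\ Theorem~\ref{thm:24cellequilibrium}; pairing up the $12$ antipodal vertex-pairs of $\widetilde P$ with the $12$ vertices of $P_{12}$ (and the facets accordingly) one verifies $|P_{12}|=|\widetilde P|/{\pm}$. Since $|\widetilde P|$ is a subdivided polytope boundary, hence $S^{3}$, and the antipodal involution acts freely (no vertex of the $24$-cell is its own negative; equivalently $T^{3}$ has no invariant simplex at all, as the list of missing edges shows), the quotient $S^{3}/\{\pm 1\}$ is $\mathbb{R}P^{3}$, using that a free involution on $S^{3}$ is conjugate to the antipodal map. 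As alternatives one may instead compute $H_{1}(P_{12};\mathbb{Z})=\mathbb{Z}/2$ together with orientability directly from the explicit chain complex, or — following \cite{Lu} — reduce $P_{12}$ by bistellar flips to the vertex-minimal $11$-vertex triangulation of $\mathbb{R}P^{3}$; each confirms the PL type. The genuine obstacle is exactly this last step: matching $P_{12}$ with the antipodal quotient of the subdivided $24$-cell (or, failing that, carrying out the PL-recognition); the orbit and $f$-vector bookkeeping and the link computations are entirely routine.
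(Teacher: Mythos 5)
The paper does not prove Lemma~\ref{RP3} at all: it is stated as a citation to Lutz~\cite{Lu}, and the paragraphs surrounding it merely record structural consequences (the twelve $4$-valent edges, the removal of which produces twelve octahedra whose universal cover carries the Schl\"afli type $\{3,4,3\}$). So there is no in-paper argument to compare against; you are essentially being asked to supply the verification that the paper delegates to a reference. Your outline does this in a sensible way, and the first two stages (group order $24$ via $\langle T,R\rangle\cong\mathbb{Z}_6\times\mathbb{Z}_2$ extended by $S$ with $STS=T^{-1}$ and $RSR=T^3S$; orbit sizes $24,12,12$; the resulting $f$-vector and the six missing edges $\{i,T^3(i)\}$; vertex-transitive reduction of the manifold check to $\mathrm{lk}(1)\cong S^2$) are correct, routine, and exactly what a careful reader would do.

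Two remarks on the third stage. First, if you genuinely realise $\widetilde P$ as a simplicial subdivision of the \emph{geometric} boundary of the $24$-cell with antipodally matched octahedron diagonals, then the involution is literally the antipodal map of $S^3\subset\mathbb{R}^4$, and the quotient is $\mathbb{R}P^3$ outright; the appeal to Livesay's theorem (free involutions on $S^3$ being standard) is then superfluous and can be dropped. Keeping it in is harmless but misleadingly suggests the argument needs $3$-manifold machinery it does not. Second, your first ``alternative'' is not actually an alternative: computing $H_1(P_{12};\mathbb{Z})\cong\mathbb{Z}/2$ together with orientability does not pin down the PL type of a closed orientable $3$-manifold (one would at minimum need $\pi_1$, and then still invoke Livesay or elliptisation), and the bistellar-flip alternative presupposes that the target $11$-vertex complex is already known to be $\mathbb{R}P^3$. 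So the ``cleanest route'' is in fact the only one of your three that closes without hidden appeals. You have correctly flagged that the real content is the explicit matching of $P_{12}$ with the antipodal quotient of the subdivided $24$-cell (a finite but nontrivial bookkeeping exercise, since one must choose the right diagonal in each of the $12$ octahedra among $3^{12}$ possibilities); the proposal is a proof outline rather than a proof until that step is carried out, but the route is sound and it is the same $24$-cell picture the paper gestures at in the remark following the lemma and in Theorem~\ref{thm:24cellequilibrium}.
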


We remark here that the edges $(1 \ 2)$, $(1 \ 8)$ 
and -- consequently -- all edges in their $T$-orbits
are $4$-valent. The star of such an edge is nothing but
an octahedron, subdivided by one diagonal. These $12$ edge stars cover the
complex entirely. Moreover, these edges form three cycles of length four:
$(1 \ 2 \ 7 \ 8), (3 \ 4 \ 9 \ 10), (5 \ 6 \ 11 \ 12)$. 
This has the consequence that omitting these $12$ edges in $P_{12}$
leads to a polyhedral decomposition of $\mathbb{R}P^3$ into $12$ octahedra,
with the link of every vertex transforming into the boundary of a $3$-cube.
Therefore, the universal covering of this decomposition has the Schl\"afli symbol
$\{3,4,3\}$ and thus must coincide with the boundary complex of the 
$24$-cell with $24$ vertices and $24$ octahedra.
However, only the subgroup generated by $T$ is a subgroup of both
the automorphism group of $P_{12}$ and the one of the 
$12$ octahedra.

\medskip

\begin{theorem}[Tight polyhedral embedding of $\mathbb{R}P^3$]
 \label{tightRP3} 
 
The triangulation $P_{12}$ of $\mathbb{R}P^3$ from  Lemma~\ref{RP3} admits
a tight polyhedral embedding into the $3$-skeleton of the boundary complex
of the $6$-dimensional cross polytope $\beta^6$ (= the dual of the $6$-cube),
where the six missing edges of $P_{12}$ correspond to the six diagonals of $\beta^6$. 
\end{theorem}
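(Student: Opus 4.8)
The plan is to realise $P_{12}$ explicitly inside $\partial\beta^6$ and then verify tightness by checking a finite, and in fact rather small, list of half-spaces. First I would set up the embedding. The cross polytope $\beta^6$ has $12$ vertices which come in $6$ antipodal pairs, and these $6$ pairs are precisely the $6$ ``diagonals'' that are \emph{not} edges of $\beta^6$. By the last sentence of Lemma~\ref{RP3}, $P_{12}$ also has exactly $6$ missing edges, namely $(1\ 7),(2\ 8),(3\ 9),(4\ 10),(5\ 11),(6\ 12)$, and $T^3$ swaps the two endpoints of each. So the natural thing is to label the $6$ antipodal pairs of $\beta^6$ by $\{i,i+6\}$ for $i=1,\dots,6$, i.e. send vertex $i$ of $P_{12}$ to the $i$-th standard basis vector $\pm e_i$ with the sign chosen so that $i\mapsto e_i$ and $i+6\mapsto -e_i$. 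Because every edge of $P_{12}$ joins two vertices whose images are \emph{not} an antipodal pair, each such edge is an edge of $\beta^6$; and since $\beta^6$ is $5$-neighbourly (every set of vertices with at most one from each antipodal pair spans a face), every triangle and every tetrahedron of $P_{12}$ spans a genuine $2$- resp. $3$-face of $\beta^6$. Hence the geometric realisation of $P_{12}$ is a subcomplex of the $3$-skeleton of $\partial\beta^6$, and the embedding is simplexwise linear and injective. I would record that the three generating automorphisms $T,R,S$ of $P_{12}$ act on the index set $\{1,\dots,12\}$ in a way compatible with the antipodal structure, so that they extend to symmetries of $\beta^6$; this is not strictly needed but cuts down the tightness check by a factor of $24$.

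Next comes tightness. Following the framework recalled just before Lemma~\ref{RP3}: for a polyhedral embedding into $\partial\beta^6$, every open or closed half-space $E_+$ intersects the embedded $|P_{12}|$ in a set homotopy equivalent to the subcomplex of $P_{12}$ spanned by the vertices lying in $E_+$, and the possible vertex subsets arising this way are exactly those that are ``separated off'' by a linear functional on the vertices $\{\pm e_1,\dots,\pm e_6\}$. A linear functional $\ell(x)=\sum c_i x_i$ evaluated on $\pm e_i$ gives the values $\pm c_i$; ordering these $12$ numbers and taking the vertices above a threshold, the subsets $V(\ell,t)\subseteq\{1,\dots,12\}$ that occur are, up to the symmetry of the $\beta^6$ labelling, parametrised by a choice of sign $\varepsilon_i\in\{+,-\}$ for each coordinate together with an ``up-set'' in the resulting ordering — equivalently, a subset $W\subseteq\{1,\dots,6\}$ together with a total preorder. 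The key combinatorial point is that such $V(\ell,t)$ always contains \emph{at most one} of $\{i,i+6\}$ for each $i$ (a strict half-space cannot contain both $e_i$ and $-e_i$), plus some degenerate boundary cases where neither or — for a closed half-space through the origin — exactly one full antipodal pair is swept in. So tightness with $\mathbb{F}=\mathbb{Z}_2$ reduces to the following statement: for every subset $U\subseteq\{1,\dots,12\}$ containing at most one vertex of each antipodal pair (an ``independent set'' of $\beta^6$), the map $H_*(P_{12}[U];\mathbb{Z}_2)\to H_*(P_{12};\mathbb{Z}_2)$ is injective, and likewise for the finitely many boundary-case subsets.

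The main obstacle is exactly this homological injectivity check, and I would handle it as follows. Because $P_{12}[U]$ with $U$ independent is induced on a vertex set meeting each antipodal class at most once, it is a full subcomplex of $P_{12}$; by the theory of tight triangulations (Kühnel \cite{Ku1}), it suffices to check injectivity of $H_1(P_{12}[U];\mathbb{Z}_2)\to H_1(P_{12};\mathbb{Z}_2)=\mathbb{Z}_2$ and $H_2(P_{12}[U];\mathbb{Z}_2)\to H_2(P_{12};\mathbb{Z}_2)=\mathbb{Z}_2$, since $P_{12}$ is connected $3$-dimensional with Betti vector $(1,1,1,1)$ over $\mathbb{Z}_2$, and $H_3$ of a proper induced subcomplex of a closed $3$-manifold vanishes. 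Concretely: (i) every induced subcomplex on at least two vertices is connected — this follows because the edge graph of $P_{12}$ restricted to any independent set is connected, which I would verify using the vertex-transitivity and the explicit edge list (the only non-edges among a $\le 6$-element independent set are ruled out by construction); (ii) no induced proper subcomplex carries a nonzero class in $H_2$ mapping nontrivially, because the only $\mathbb{Z}_2$-fundamental-class-type $2$-cycle would need all $48$ tetrahedra's worth of boundary, impossible on $<12$ vertices; and (iii) the only delicate point is $H_1$: one must check that for no independent $U$ does $P_{12}[U]$ contain a $1$-cycle that is non-bounding in $P_{12}$, equivalently represents the nonzero class of $H_1(\mathbb{R}P^3;\mathbb{Z}_2)$. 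Here I would argue that a generator of $H_1(P_{12};\mathbb{Z}_2)$ is carried by a $4$-cycle among the $4$-valent edges, e.g. $(1\ 2\ 7\ 8)$, and any independent set contains at most one of $\{1,7\}$ and at most one of $\{2,8\}$, so it can contain at most two of the four vertices of this cycle — and by symmetry (the automorphism group is transitive on such cycles, and any essential loop is homologous to one of the three) no essential loop can survive in an induced subcomplex. Running this argument over the finitely many symmetry classes of independent sets and the handful of degenerate threshold cases completes the verification; I expect step (iii), pinning down the $H_1$-behaviour on all induced subcomplexes, to be the part that actually requires care, while everything else is bookkeeping.
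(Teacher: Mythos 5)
The embedding setup is fine, and the reduction to finitely many vertex subsets is the right framework. The fatal problem is in your step (iii), the $H_1$ argument, which is both logically inverted and factually false.

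Tightness (injectivity of $H_1(P_{12}[U];\mathbb{Z}_2)\to H_1(P_{12};\mathbb{Z}_2)\cong\mathbb{Z}_2$) does \emph{not} require that no essential loop appears in an induced subcomplex. Since the target is $\mathbb{Z}_2$, the map is injective exactly when $H_1(P_{12}[U];\mathbb{Z}_2)$ has dimension at most $1$ \emph{and} any nonzero class in it hits the generator. In other words, you must show that every $1$-cycle in $P_{12}[U]$ that does not bound in $P_{12}[U]$ is \emph{essential} in $\mathbb{R}P^3$. If, as you claim, ``no essential loop can survive in an induced subcomplex,'' then any independent $U$ with $H_1(P_{12}[U])\neq 0$ would give a noninjective map and ruin tightness. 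So proving your claim would refute the theorem, not establish it.

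Your claim is also simply false. The paper records that $(1\ 2\ 4)$ is an ``empty'' triangle of $P_{12}$: its three edges are present but the $2$-face is not, and this $1$-cycle represents the nontrivial class of $H_1(\mathbb{R}P^3;\mathbb{Z}_2)$. The vertex set $\{1,2,4\}$ meets the antipodal pairs $\{1,7\}$, $\{2,8\}$, $\{4,10\}$ each exactly once, so it is precisely one of the independent sets you are considering, and $P_{12}[\{1,2,4\}]$ is a triangle graph carrying the essential loop. The underlying error is the assumption that because a particular representative of the generator (your $4$-cycle $(1\ 2\ 7\ 8)$) cannot sit inside an independent set, no other representative can: homology classes have many representatives, and in $P_{12}$ many short ones (in fact, every $64$ ``empty'' triangles) are essential. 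What the paper actually verifies, orbit by orbit, is that all the $1$-cycles that appear in the spans of $\le 6$-element vertex sets \emph{are} homologous to the generator whenever they are nonbounding in the span — the exact opposite of your step (iii) — and then disposes of larger vertex sets by Alexander duality. As written, your argument has no route to that conclusion and would need to be replaced wholesale by such a case analysis (or an equivalent discrete-Morse or Stanley--Reisner argument).
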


\begin{proof}
Up to symmetries of the cross polytope, the embedding of the $12$-vertex 
triangulation of $\mathbb{R}P^3$ from Lemma~\ref{RP3} into $\beta^6$
is unique. For the tightness it is sufficient to show that the span 
of any subset of vertices containing none of the antipodal pairs
constituting the diagonals injects into the manifold at the 
$\mathbb{Z}_2$-homology level. By duality, $H_0$ and $H_1$
 (for $0$- and $1$-tightness tightness respectively) are sufficient
 to show the tightness of the embedding.
  
The $f$-vector of the boundary complex of $\beta^6$ is $(12, 60, 
160, 240, 192, 64)$. The $0$-tightness holds because $P_{12}$ contains 
the entire edge graph of $\beta^6$. That is, for every subset of vertices $W$,
the subcomplex of $P_{12}$ spanned by $W$ has exactly as many connected 
components as the subcomplex of $\beta^6$ spanned by $W$, and $0$-homology injects. 

To show $1$-tightness we must prove that $1$-homology injects for 
subcomplexes spanned by subsets of up to six vertices -- the remaining vertex 
sets follow by duality (see below).
This is trivially the case for subsets of size at most two. For larger subsets we prove:

\begin{enumerate}
  \item Any ``empty" triangle in the closure of $\beta^6 \setminus P_{12}$ represents
the generator of the first $\mathbb{Z}_2$-homology (or fundamental group)
of $\mathbb{R}P^3$. 
  \item Any tetrahedron not in $P_{12}$ spans the union of
two triangles of $P_{12}$ representing homology between the two other triangles
which are not in $P_{12}$. Note that this, together with the previous step, and the fact that $P_{12}$ covers the entire graph of $\beta^6$ implies that $1$-homology injects.  
  \item Any vertex set of a $4$- or $5$-simplex of $\beta^6$ spans the generator in $1$-homology in $P_{12}$ and nothing else.
\end{enumerate}

These statements are tedious to verify by hand, but straightforward to 
check by a computer. Here, we give additional information to facilitate this
computer check.

\medskip

The $64$ triangles in $\beta^6$ that are ``empty'' triangles in $P_{12}$ split
into four orbits under the automorphism group. They are generated by
\[(1 \ 2 \ 4), \ (2 \ 3  \ 4), \ ( 1 \ 2 \ 6), \ (2 \ 6 \ 10)\]
of size $24$, $12$, $24$, and $4$ respectively. To show that one of them
is non-trivial, construct a discrete Morse function
collapsing $P_{12}$ onto one of these empty triangles with only one critical tetrahedron
and triangle in the process. Such a function is quite simple to find.

If one of the empty triangles is non-trivial in homology, then all other empty 
triangles in its orbit are non-trivial. Moreover, note that
the span of $\{ 1,2,3,4\}$ in $P_{12}$ is $(1\ 2\ 3),\ (1\ 3\ 4),\ (2\ 4)$
and hence $(1\ 2\ 4)$ and $(2\ 3\ 4)$ are homologous. Analogously, we can
verify that $(1\ 2\ 4)$ and $(1\ 2\ 6)$, and $(1\ 2\ 6)$ and $(2\ 4\ 6)$
are homologous. Altogether, it follows that all $64$ empty triangles represent
the non-trivial element in the first $\mathbb{Z}_2$-homology of $P_{12}$. 

\medskip

The $192$ tetrahedra of $\beta^6$ that are not tetrahedra of $P_{12}$ split
into the following generators of orbits under the automorphism group:
\[ (1\ 2\ 3\ 4),\ (1\ 2\ 3\ 6),\ (1\ 2\ 4\ 6),\ (1\ 2\ 4\ 12),\ (1\ 2\ 5\ 6),\ (1\ 2\ 5\ 9),\ (1\ 2\ 5\ 12) \]
of length $24$, and 
\[ (1\ 2\ 4\ 5),\ (1\ 2\ 4\ 11)\]
of length $12$.
Each of these $4$-tuples spans two triangles and two empty triangles of $P_{12}$
and - therefore - represents the generator of the first 
$\mathbb{Z}_2$-homology.

\medskip
The $192$ $4$-simplices of $\beta^6$ are generated by the following $5$-tuples:
\[ (1\ 2\ 3\ 4\ 6),\ (1\ 2\ 3\ 4\ 11),\ (1\ 2\ 3\ 5\ 6),\ (1\ 2\ 3\ 5\ 10),\ (1\ 2\ 4\ 5\ 6),\ (1\ 2\ 5\ 6\ 9) \]
of length $24$, and
\[ (1\ 2\ 3\ 4\ 5),\ (1\ 2\ 3\ 4\ 12),\ (1\ 2\ 3\ 6\ 10),\ (1\ 2\ 6\ 10\ 11) \]
of length $12$.
\begin{itemize}
  \item $(1\ 2\ 3\ 4\ 5)$, $(1\ 2\ 3\ 5\ 6)$, and $(1\ 2\ 3\ 5\ 10)$ span two adjacent tetrahedra and one extra edge in $P_{12}$,
a complex that collapses onto an empty triangle.  
  \item $(1\ 2\ 3\ 4\ 6)$, $(1\ 2\ 3\ 6\ 10)$, and $(1\ 2\ 4\ 5\ 6)$
span a triangulated 5-vertex M\"obius band which collapses onto an empty triangle.
  \item $(1\ 2\ 3\ 4\ 11)$, $(1\ 2\ 3\ 4\ 12)$, $(1\ 2\ 5\ 6\ 9)$, and $(1\ 2\ 6\ 10\ 11)$ contain one tetrahedron 
and the cone from the $5^{th}$ vertex to two opposite edges of the tetrahedron, hence 
they, again, represent the generator of the first homology. 
\end{itemize}

There are four orbits of the $64$ facets (5-simplices) of $\beta^6$
under the group with the following generators:
\[(1 \ 2 \ 3 \ 4  \ 5 \ 6), \ (1 \ 2 \ 3 \ 4  \ 6 \ 11), \ 
(1 \ 2 \ 3 \ 6  \ 10 \ 11), \ (1 \ 2 \ 5 \ 6  \ 9 \ 10)\]  
Each orbit also contains the complementary $6$-tuple by the involution $T^3$.
The first three span three stacked tetrahedra and two additional triangles,
and collapse onto one of its eight empty triangles. The last one
spans three tetrahedra pairwise sharing an edge. It, again collapses
onto on of its eight empty triangles.

\medskip
We have now verified that $1$-homology injects for all half-spaces spanned by 
up to six vertices of $\beta^6$. 
By Alexander duality it is not necessary to discuss $7$-tuples, $8$-tuples 
or $9$-tuples. Their homological type is complementary to the one of
their complements. 
As an example, the complex opposite to the triangle $(1\ 9\ 10)$ 
collapses onto a projective plane:
It contains the M\"obius band with the following $8$ triangles
\[(5\ 8\ 12),\ ( 2\ 5\ 12),\ (2\ 5\ 6),\ (2\ 4\ 6),\ (4\ 6\ 11),\ (3\ 6\ 11),\ (3\ 8\ 11),\ (3\ 8\ 12)\]
and the cone from $7$ to its boundary $(2\ 4\ 11\ 8\ 5\ 6\ 3\ 12)$.
\end{proof}

\section{The highly symmetric and vertex-minimal triangulation of
  $\mathbb{R}P^4$}
  \label{sec:min}
  
Recall that there is no $15$-vertex triangulation of $\mathbb{R}P^4$ \cite{AM}.
There is a standard triangulation of $\mathbb{R}P^k$ with $2^{k+1} - 1$
vertices as a 2-fold quotient of the first barycentric subdivision of
a $(k+1)$-simplex \cite{AAK}. Combinatorially this appears as the
vertex link in the standard lattice triangulation of $\mathbb{R}^{k+1}$
(or standard triangulation of the cubical tessellation) \cite{Br-Ku2,Mara}.
More recently, triangulations of $\mathbb{R}P^k$ with smaller vertex numbers
have been constructed, see \cite{VZ} and \cite{AAK}. The 
latter article even presents triangulations with vertex numbers subexponential 
in $k$. The best asymptotic bound for large $k$ still seems to be unknown. 

However, for $k=4$ there is a surprising and exceptional example. The example as a
combinatorial triangulation with vertex transitive automorphism group is due to Lutz \cite{Lu}.
Here we present it as the quotient of a centrally symmetric $5$-polytope under antipodal 
identification, thereby giving a separate, independent proof of its existence
as an abstract triangulation.

\medskip

\begin{theorem}[Vertex-minimal triangulation of $\mathbb{R}P^4$ \cite{Lu} as quotient of centrally symmetric $5$-polytope]
\label{RP4}

There exists a triangulation of $\mathbb{R}P^4$
with $16$ vertices admitting the automorphism group $S_6$
of order $720$ with two orbits of vertices
$\{1, 2, 3, \ldots, 10\}$ and  $\{{\it 11, 12, 13, 14, 15, 16}\}$.
The group is generated by the permutations
\begin{eqnarray*}
T &=& (1 \ 2 \ 3 \ 4 \ 5 \ 10)(6 \ 8 \ 9)(7)({\it 11 \ 12 \ 13  \ 14  \ 15  \ 16})\\
S &=& (2 \ 7)(4 \ 10)(5 \ 6)({\it 11 \ 12})
\end{eqnarray*}
and the triangulation is determined by the two $4$-simplices
$(1 \ 2 \ 4 \ 5 \ {\it 11}), \ (1 \ 2 \ 4 \ {\it 11 \ 13})$ generating
orbits of length $30$ and $120$, respectively. The triangulation is
$2$-neighbourly, and the $f$-vector is $(16, 120, 330, 375, 150)$.
This triangulation appears in \cite{Lu}.

The universal covering of this triangulation is the
boundary complex of a spherical and centrally symmetric convex $5$-polytope $P$
with $32$ vertices
that realises the full automorphism group $S_6 \times C_2$
by euclidean symmetries.
\end{theorem}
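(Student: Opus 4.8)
\emph{Proof strategy.} The plan is to exhibit the polytope $P$ by hand, as a union of two orbits of $S_6\times C_2$ acting on the $5$-dimensional hyperplane $H=\{x\in\mathbb{R}^6:\sum_i x_i=0\}$, with $S_6$ permuting coordinates and $C_2$ acting by $x\mapsto -x$. Concretely, I would take the $12$ points $\pm a_i$ with $a_i=6e_i-\mathbf{1}$ for $1\le i\le 6$, together with the $20$ points $b_S=2\sum_{i\in S}e_i-\mathbf{1}$, one for each $3$-element subset $S\subseteq\{1,\dots,6\}$, noting $b_{S^c}=-b_S$; after rescaling each orbit onto the unit sphere of $H$ (which, since $|a_i|^2=30$ and $|b_S|^2=6$, pins the ratio of the two scalings uniquely), set $P=\operatorname{conv}\bigl(\{\pm a_i/\sqrt{30}\}\cup\{b_S/\sqrt{6}\}\bigr)$. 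By construction $P$ is centrally symmetric and $S_6\times C_2$ acts on it by Euclidean isometries preserving the circumscribed sphere. Under the antipodal quotient the $16$ resulting vertices are the $6$ ``directions'' $\{\pm a_i\}$ and the $10$ unordered pairs $\{b_S,b_{S^c}\}$ of complementary triads, which matches exactly the two $S_6$-orbits $\{{\it 11},\dots,{\it 16}\}$ and $\{1,\dots,10\}$ in the statement.

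\emph{Main steps.} (1) Show that $P$ is a simplicial $5$-polytope with precisely these $32$ vertices and compute its facets: the claim is that they form two $S_6\times C_2$-orbits, of sizes $60$ and $240$, generated by the $4$-simplices lifting the generating simplices $(1\ 2\ 4\ 5\ {\it 11})$ and $(1\ 2\ 4\ {\it 11}\ {\it 13})$. This is a finite convex-hull computation once the scaling is fixed; one checks that the $300=60+240$ candidate simplices are indeed facets and exhaust the boundary, with the Dehn--Sommerville relations for a simplicial $4$-sphere on $32$ vertices with face vector $(32,240,660,750,300)$ serving as a consistency check. (2) Record that $S_6\times C_2$ acts as combinatorial automorphisms of $\partial P$ because it acts by linear symmetries, and that this is the full automorphism group, read off from the abstract complex or cited from \cite{Lu}. (3) Observe that $\iota\colon x\mapsto -x$ is a free simplicial involution of $\partial P\cong S^4$: it fixes no vertex since $0$ is not among the $32$ points, and no face of $P$ can contain an antipodal pair $\{v,-v\}$ because $0$ lies in the interior of the segment $[v,-v]$ and hence in the interior of $P$. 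Therefore $M:=\partial P/\iota$ is a combinatorial closed $4$-manifold with $\pi_1(M)=\mathbb{Z}/2$ whose universal cover is $\partial P\cong S^4$, it is $2$-neighbourly, and its face vector is $(16,120,330,375,150)$. (4) Identify $M$: choosing the labelling so that a coordinate $6$-cycle descends to the permutation $T$ (its induced action on the $6$ directions, and on the $10$ triad-pairs, has cycle type $6+3+1$, matching $T$) and the appropriate transposition descends to $S$, one sees that the two facet orbits of $M$ are generated by $(1\ 2\ 4\ 5\ {\it 11})$ and $(1\ 2\ 4\ {\it 11}\ {\it 13})$, so $M$ is combinatorially isomorphic to the triangulation of $\mathbb{R}P^4$ in \cite{Lu}; in particular $M\cong\mathbb{R}P^4$. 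Since $\partial P$ is the boundary complex of a convex $5$-polytope, this also establishes the final assertion of the theorem.

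\emph{Main obstacle.} The substantive point is step (1): verifying that the explicit $32$-point configuration is in convex position, that $P$ is simplicial, and that its face lattice is the predicted one --- and, relatedly, that the (here forced) relative scaling of the two orbits lies in the regime where this holds, rather than one where some $b_S$ or $\pm a_i$ fails to be a vertex or where non-simplicial facets appear. This is conceptually routine but computational, and it is what makes the existence of a \emph{centrally symmetric polytopal} model the genuine content of the theorem. A smaller point requiring care is step (4): rather than invoking a classification of free PL involutions of $S^4$, the cleanest route is to match $M$ directly with the known triangulation in \cite{Lu}, which already carries the identification of the quotient as $\mathbb{R}P^4$.
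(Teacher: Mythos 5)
Your construction is the same as the paper's up to a global scale factor (your unit-sphere normalization of $a_i/\sqrt{30}$ and $b_S/\sqrt{6}$ matches the paper's normalization to radius $\sqrt{6}$), and your overall strategy --- build the centrally symmetric $5$-polytope $P$, pass to the boundary complex, quotient by the antipodal map, and identify the result with Lutz's triangulation --- mirrors the paper's proof step for step. The paper makes step (1) concrete by exhibiting supporting hyperplanes $N_1, N_2$ for the two generating facets $\Delta_1, \Delta_2$ and invoking the $S_6\times C_2$-symmetry, but that is the same computation you defer to.

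There is, however, a genuine gap in your step (3). You verify that no face of $P$ contains an antipodal pair $\{v,-v\}$ and conclude from this that $\partial P/\iota$ is a combinatorial manifold. But that condition is automatic for \emph{every} centrally symmetric polytope (the segment $[v,-v]$ passes through the interior, as you note), and it is \emph{not} sufficient for the quotient to be an abstract simplicial complex. What must additionally be checked --- and what the paper explicitly checks by inspecting $\Delta_1$, $\Delta_2$ and their antipodes --- is that no vertex $v$ of $P$ is joined by edges of $\partial P$ to both $X$ and $X'=-X$. If some $v$ were, the two distinct edges $\{v,X\}$ and $\{v,X'\}$ would project to the same vertex pair $\{\bar v,\bar X\}$ downstairs, and the quotient would have two $1$-simplices on the same vertex set, i.e.\ would fail to be a simplicial complex. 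More generally, if two non-antipodal faces $\sigma,\tau$ of $\partial P$ have the same image, then writing $\tau=\{\epsilon_1 v_1,\dots,\epsilon_k v_k\}$ with $\sigma=\{v_1,\dots,v_k\}$ and not all $\epsilon_i$ equal, some $v_i$ with $\epsilon_i=+1$ is joined (via $\sigma$) to some $v_j$ and (via $\tau$) to $-v_j$, so this edge-level condition is in fact equivalent to the quotient being simplicial. Once you add this verification, the remainder of your argument goes through and coincides with the paper's.
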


\begin{rem}
A description of the abstract 4-sphere as a double covering
of the 16-vertex $\mathbb{R}P^4$ together with combinatorial observations
on the Witt design and other constructions of the 16-vertex triangulation
can be found in \cite{Bal}. There the construction of the 4-sphere is based 
on bistellar flips but the polytopality is not mentioned. 
\end{rem}

\begin{proof}
The (abstract) triangulation itself is described in \cite{Lu}. 
Here we describe it as a quotient of a centrally symmetric $5$-polytope
embedded into a hyperplane in Euclidean $6$-space.

First, we adapt the labelling from \cite{Lu} to better suit our purposes.
Namely,  we relabel the vertices in
the small vertex orbit ${\it 11, 12, 13, 14, 15, 16}$ by $1,2,3,4,5, 6$, and 
the vertices in the large orbit by the set of all $10$ pairs of disjoint triples out
of these six vertices. This implies

\begin{center}
\begin{tabular}{lllll}
$1 \mapsto (124)(356)$, & $2 \mapsto (146)(235)$, & $3 \mapsto (125)(346)$, &$4 \mapsto (145)(236)$, &$5 \mapsto (134)(256)$,  \\
$6 \mapsto (156)(234)$, & $7 \mapsto (135)(246)$, & $8 \mapsto (126)(345)$, &$9 \mapsto (123)(456)$, &$10 \mapsto (136)(245)$
\end{tabular}
\end{center}

The $5$-polytope $P$ has the vertices $1, 2, 3, 4, 5, 6, 1', 2', 3', 4', 5', 6'$
and triples $(abc)(def)$ splitting into vertices $(abc)(d'e'f')$ and $(a'b'c')(def)$.
This leads to an action of $S_6\times C_2$ generated by
\begin{eqnarray*}
T &=& (1 \ 2 \ 3 \ 4 \ 5 \ 6)(1' \ 2' \ 3' \ 4' \ 5' \ 6')\\
S &=& (1 \ 2)(1' \ 2')\\
R &=& (1 \ 1')(2 \ 2')(3 \ 3')(4  \ 4')(5 \ 5')(6 \ 6')
\end{eqnarray*}
and the natural extension to the triples:
\[T(a \ b \ c) := (Ta \ Tb \ Tc), \ \ T(a' \ b' \ c') := (Ta' \ Tb' \ Tc')\] 
\[S(a \ b \ c) := (Sa \ Sb \ Sc), \ \ \ S(a' \ b' \ c') := (Sa' \ Sb' \ Sc')\]
\[R(a \ b \ c) := (Ra \ Rb \ Rc), \ \ R(a' \ b' \ c') := (Ra' \ Rb' \ Rc')\]
After dividing out by the central involution $R$ this action
is equivalent to the one on the $16$ vertices in the original triangulation of $\mathbb{R}P^4$.

\medskip
The idea of the construction is to have vertices $1, 2, 3, 4, 5, 6$ forming
a regular simplex in 5-space, and the other ten vertices
corresponding to the barycenters of triples (explained in detail below). 
The vertices $1', 2', 3', 4', 5', 6'$
form the simplex opposite the regular $5$-simplex with its corresponding triples.
It is convenient to work in the hyperplane of $\mathbb{R}^6$ 
where the sum of all coordinates is zero.
Then a regular $5$-simplex is defined by the six vertices
\begin{eqnarray*}
1 &\leftrightarrow& \textstyle\frac{1}{\sqrt{5}}(5, -1, -1, -1, -1, -1)\\
2 &\leftrightarrow & \textstyle\frac{1}{\sqrt{5}}(-1, 5, -1, -1, -1, -1)\\
3 &\leftrightarrow& \textstyle\frac{1}{\sqrt{5}}(-1, -1, 5, -1, -1, -1)\\
4 &\leftrightarrow & \textstyle\frac{1}{\sqrt{5}}(-1, -1, -1, 5, -1, -1)\\
5 &\leftrightarrow & \textstyle\frac{1}{\sqrt{5}}(-1, -1, -1, -1, 5, -1)\\
6 &\leftrightarrow & \textstyle\frac{1}{\sqrt{5}}(-1, -1, -1, -1, -1, 5)
\end{eqnarray*}
admitting an obvious $S_6$-action. The vertices $1', 2', 3', 4', 5', 6'$
are defined as antipodes, so $1'$ has the coordinates 
$\textstyle\frac{1}{\sqrt{5}}(-5, 1, 1, 1, 1, 1)$, and so on.
A triple $(a\ b\ c)$ corresponds to the barycenters of
the three points $a$, $b$, and $c$ but in such a way that all points lie in 
a sphere of radius $\sqrt{6}$. Note that, this way, for instance, $123$ and 
$4'5'6'$ are both assigned coordinates
$(1, 1, 1, -1, -1, -1)$, and $1'2'3'$ and $456$ are both assigned coordinates
$(-1, -1, -1, 1, 1, 1)$ (hence the grouping together of pairs of complementary 
triples in this way). Altogether we have $6 + 6 + 20 = 32$ points
in the hyperplane, and the polytope $P$ (inscribed into the sphere of 
radius $\sqrt{6}$) is defined as their convex hull.
By construction, $P$ is invariant under the $S_6$-action and under
the antipodal involution sending any point to its negative.
It remains to determine the facets of $P$.

\medskip

The $5$-space $\mathbb{R}^5$ containing $P$ is the graph
of a linear function $\mathbb{R}^5 \to \mathbb{R}$
assigning the sixth coordinate as the negative sum of the others.
Therefore we can just forget the sixth coordinate and obtain
a polytope $P'$ in $\mathbb{R}^5$ which is combinatorially
isomorphic with $P$ by an affine transformation.
We compute the facets of this $P'$.

\medskip
We now show that the $5$-tuples $\Delta_1 = (1, \ (1 2 3)(4'5'6'), \ (1 2 4)(3'5'6'), \ (1 2 5)(3'4'6'), \ (1 2 6)(3'4'5'))$ and 
$\Delta_2 = (1, \ 2', \ (1 3 4)(2'5'6'), \ (1 3 5)(2'4'6'), \ (1 4 5)(2'3'6'))$ 
are facets of $P'$, generating orbits of $60+240=300$ facets under the automorphism group.
 
The normal vector $N_1 = (1 + \frac{1}{\sqrt{5}}, \sqrt{5}-1, 0, 0, 0)$
is orthogonal to $\Delta_1$:

\[N_1 \cdot {1} = \sqrt{5}\Big(1 + \frac{1}{\sqrt{5}}\Big) - \frac{1}{\sqrt{5}}\Big(\sqrt{5} - 1\Big) = \sqrt{5} + 1 - 1 + \frac{1}{\sqrt{5}} = \frac{6}{\sqrt{5}} = 2,68328 \ldots \]

\[N_1 \cdot {1 2 3} = (1 + \frac{1}{\sqrt{5}}) +(\sqrt{5} - 1) = \sqrt{5} + \frac{1}{\sqrt{5}} = \frac{6}{\sqrt{5}} = N_1 \cdot {1 2 4} =N_1 \cdot {1 2 5} =N_1 \cdot {1 2 6}.\]

Any other vertex $X$ of $P$ gives a strictly smaller value for $N_1 \cdot X$.% In particular, we have
%
%\[ N_1 \cdot {2} = -\frac{1}{\sqrt{5}}\Big(1 + \frac{1}{\sqrt{5}}\Big) +
%\sqrt{5}\Big(\sqrt{5} - 1\Big) = -\frac{1}{\sqrt{5}} -\frac{1}{5} + 5 - \sqrt{5} = \frac{24}{5} - \frac{6}{\sqrt{5}} = 2,11672 \ldots \]

The normal vector 
 $N_2 = (\frac{3}{4} + \frac{\sqrt{5}}{6 - 2\sqrt{5}}, 
\frac{3}{4} - \frac{\sqrt{5}}{6 - 2\sqrt{5}}   , 1, 1, 1)$ 
is orthogonal to $\Delta_2$:

$N_2 \cdot {1} = \sqrt{5}\Big(\frac{3}{4} + \frac{\sqrt{5}}{6 - 2\sqrt{5}}\Big) - \frac{1}{\sqrt{5}}\Big(\frac{3}{4} - \frac{\sqrt{5}}{6 - 2\sqrt{5}}\Big)
 - \frac{3}{\sqrt{5}} = \frac{6}{6 - 2\sqrt{5}} + \frac{1}{\sqrt{5}}\Big(\frac{15}{4} - \frac{3}{4} -3 \Big) = \frac{3}{3-\sqrt{5}} = 3,92705 \ldots$

$N_2 \cdot 2'  = 
\frac{1}{\sqrt{5}}\Big(\frac{3}{4} + \frac{\sqrt{5}}{6 - 2\sqrt{5}}\Big) - \sqrt{5}\Big(\frac{3}{4} -  \frac{\sqrt{5}}{6 - 2\sqrt{5}}\Big)
 + \frac{3}{\sqrt{5}} = \frac{6}{6 - 2\sqrt{5}} + \frac{1}{\sqrt{5}}\Big(\frac{3}{4}-\frac{15}{4}  + 3 \Big) = \frac{3}{3-\sqrt{5}} $

$N_2 \cdot {1 3 4} = \frac{3}{4} + \frac{\sqrt{5}}{6 - 2\sqrt{5}} - \Big(\frac{3}{4} - \frac{\sqrt{5}}{6 - 2\sqrt{5}}\Big) + 1 + 1 - 1 = 1 + \frac{\sqrt{5}}{3-\sqrt{5}} = \frac{3}{3-\sqrt{5}} = N_2 \cdot {1 3 5} =N_2 \cdot {1 4 5}$.

Any other vertex $X$ of $P$ gives a smaller value for $N_2 \cdot X$.

\medskip
These two facets of $P$ generate orbits of length $60$ and $240$, respectively,
and these orbits cover all facets of $P$.

No vertex of $P$ is simultaneously joined by an edge of $P$
to a pair of antipodal vertices $(X \ X')$. This can be seen upon 
inspection of $\Delta_1$ and $\Delta_2$, and their antipodal facets. 
Therefore, by dividing out by the central involution $R$ we
obtain a triangulation of $\mathbb{R}P^4$ with 16 vertices
$1, 2, 3, 4, 5, 6$ and all ten pairs of disjoint triples among them.

Under the action of the remaining group $S_6$ there are
two orbits of 4-simplices of length 30 and 120, respectively.
This coincides with the observation made in \cite{Lu}, cf. the first part of Theorem~\ref{RP4}.
More precisely, one can establish the bijection between the two kinds
of vertices which is compatible with the two group actions $T$ and $S$ as done above.

Altogether, the proof of the last statement of Theorem~\ref{RP4} also leads
to an (independent) proof of the entire theorem.
\end{proof}

\section{Open problems}
\label{sec:problems}

We conclude this article by listing a number of open problems we came across while 
working on this topic. They are listed in no particular order.

\begin{enumerate}
  \item Is there a tight polyhedral embedding of $\mathbb{R}P^4$?

  \smallskip
  \noindent  The highly symmetric triangulation in Section~\ref{sec:min} does not seem
  to be suitable: One can put the $10$ distinguished vertices as an
  outer $9$-simplex into euclidean $9$-space, but the induced embedding
  is not tight. There cannot be any centrally symmetric triangulation because
  the Euler characteristic is odd. In particular, there is no analogue of
  Theorem~\ref{tightRP3}.
  
  \item Is there a tight polyhedral embedding of $\mathbb{C}P^3$ ?

  \smallskip
  \noindent  There does not seem to be a suitable candidate 
  as an abstract triangulation. An equilibrium triangulation
  containing the $15$-vertex $3$-torus is not suitable since this $15$-vertex
  $3$-torus is not a tight triangulation. 
  
  \item Is there a Hopf triangulation of $S^7$ generating a perfect
  equilibrium triangulation of~$\mathbb{C}P^4$?
  
  \smallskip
  \noindent
  Such a triangulation must have $31$ vertices to contain the central $4$-torus
  ${\bf 1 \ 2 \ 4 \ 8 \ 16 }$, as well as $5$-dimensional ``solid torus''
  ${\bf 1 \ 1 \ 1 \ 4 \ 8 \ 16 }$ of topological type $B^2 \times (S^1)^3$ and three of its multiples.
  Candidates for the $6$-dimensional ``solid tori'' of type $B^4 \times S^1 \times S^1$ exist as 
  permcycles ${\bf 1\ 1\ 1\ 4\ 4\ 4\ 16}$, and its orbit under $\sigma : x\mapsto 2\cdot x \mod 31$;
  and another ``solid torus'' triangulation, with $\mathbb{Z}_{31}$-generators listed in
  Appendix~\ref{app:d6}, and its orbit under $\sigma$.

\item Is there any (vertex-minimal) 22-vertex triangulation of $\mathbb{R}P^5$?

\item Is there an equilibrium triangulation of the quaternionic projective plane
along the lines of a Hopf triangulation of $S^7$ with
3-dimensional Hopf fibres and a ``central torus'' of type $S^3 \times S^3$
as a quotient of $S^3 \times S^3 \times S^3$ by the diagonal action
of $Sp(1) \cong S^3$ ?

\noindent There are several vertex-minimal 13-vertex triangulations of
$S^3 \times S^3$ but none
with a vertex transitive group action \cite{Lu}. Hence such a complex will be
rather unsymmetrical if it exists.
On the other hand there are fairly symmetric 15-vertex triangulations of
$S^3 \times S^3$ and also $S^7$ in \cite{KoLu}.
\end{enumerate}
%\newpage

\small

\bigskip

Wolfgang K\"uhnel,
Department of Mathematics, 
University of Stuttgart, 
D--70550 Stuttgart

E-mail: {\tt kuehnel@mathematik.uni-stuttgart.de}

\medskip
Jonathan Spreer,
School of Mathematics and Statistics,
University of Sydney, Australia

E-mail: {\tt jonathan.spreer@sydney.edu.au}

\newpage

\appendix

\section{Decomposition of $\operatorname{4C}(1,2,4,8;31)$ into handlebodies $A_i$, $1\leq i \leq 4$}
\label{app:s7}

The generators of $\mathbb{Z}_{31}$-orbits of $A_1$ under cyclic shift $i \mapsto (i+1) \mod 31$:

\begin{tabular}{llll}
\\
$f$-vector: & \multicolumn{3}{l}{$(31, 465, 3255, 12710, 26722, 30132, 17236, 3937)$}\\
&&&\\
\hline
&&&\\
$(0\ 1\ 2\ 3\ 4\ 5\ 6\ 7 ),$&$( 0\ 1\ 2\ 3\ 4\ 5\ 7\ 8 ),$&$( 0\ 1\ 2\ 3\ 4\ 5\ 8\ 9 ),$&$( 0\ 1\ 2\ 3\ 4\ 5\ 9\ 10 )$\\ 
$(0\ 1\ 2\ 3\ 4\ 5\ 10\ 11 ),$&$( 0\ 1\ 2\ 3\ 4\ 5\ 11\ 12 ),$&$( 0\ 1\ 2\ 3\ 4\ 5\ 12\ 13 ),$&$( 0\ 1\ 2\ 3\ 4\ 5\ 13\ 14 )$\\ 
$(0\ 1\ 2\ 3\ 4\ 5\ 22\ 23 ),$&$( 0\ 1\ 2\ 3\ 4\ 5\ 23\ 24 ),$&$( 0\ 1\ 2\ 3\ 4\ 5\ 24\ 25 ),$&$( 0\ 1\ 2\ 3\ 4\ 5\ 25\ 26 )$\\ 
$(0\ 1\ 2\ 3\ 4\ 5\ 26\ 27 ),$&$( 0\ 1\ 2\ 3\ 4\ 5\ 27\ 28 ),$&$( 0\ 1\ 2\ 3\ 4\ 5\ 28\ 29 ),$&$( 0\ 1\ 2\ 3\ 5\ 6\ 7\ 8 )$\\ 
$(0\ 1\ 2\ 3\ 5\ 6\ 8\ 9 ),$&$( 0\ 1\ 2\ 3\ 5\ 6\ 9\ 10 ),$&$( 0\ 1\ 2\ 3\ 5\ 6\ 10\ 11 ),$&$( 0\ 1\ 2\ 3\ 5\ 6\ 11\ 12 )$\\ 
$(0\ 1\ 2\ 3\ 5\ 6\ 12\ 13 ),$&$( 0\ 1\ 2\ 3\ 5\ 6\ 13\ 14 ),$&$( 0\ 1\ 2\ 3\ 5\ 6\ 22\ 23 ),$&$( 0\ 1\ 2\ 3\ 5\ 6\ 23\ 24 )$\\ 
$(0\ 1\ 2\ 3\ 5\ 6\ 24\ 25 ),$&$( 0\ 1\ 2\ 3\ 5\ 6\ 25\ 26 ),$&$( 0\ 1\ 2\ 3\ 5\ 6\ 26\ 27 ),$&$( 0\ 1\ 2\ 3\ 5\ 6\ 27\ 28 )$\\ 
$(0\ 1\ 2\ 3\ 5\ 6\ 28\ 29 ),$&$( 0\ 1\ 2\ 3\ 6\ 7\ 8\ 9 ),$&$( 0\ 1\ 2\ 3\ 6\ 7\ 9\ 10 ),$&$( 0\ 1\ 2\ 3\ 6\ 7\ 10\ 11 )$\\ 
$(0\ 1\ 2\ 3\ 6\ 7\ 11\ 12 ),$&$( 0\ 1\ 2\ 3\ 6\ 7\ 12\ 13 ),$&$( 0\ 1\ 2\ 3\ 6\ 7\ 13\ 14 ),$&$( 0\ 1\ 2\ 3\ 6\ 7\ 23\ 24 )$\\ 
$(0\ 1\ 2\ 3\ 6\ 7\ 24\ 25 ),$&$( 0\ 1\ 2\ 3\ 6\ 7\ 25\ 26 ),$&$( 0\ 1\ 2\ 3\ 6\ 7\ 26\ 27 ),$&$( 0\ 1\ 2\ 3\ 6\ 7\ 27\ 28 )$\\ 
$(0\ 1\ 2\ 3\ 6\ 7\ 28\ 29 ),$&$( 0\ 1\ 2\ 3\ 7\ 8\ 9\ 10 ),$&$( 0\ 1\ 2\ 3\ 7\ 8\ 10\ 11 ),$&$( 0\ 1\ 2\ 3\ 7\ 8\ 11\ 15 )$\\ 
$(0\ 1\ 2\ 3\ 7\ 8\ 26\ 27 ),$&$( 0\ 1\ 2\ 3\ 7\ 8\ 27\ 28 ),$&$( 0\ 1\ 2\ 3\ 7\ 8\ 28\ 29 ),$&$( 0\ 1\ 2\ 3\ 7\ 11\ 12\ 14 )$\\ 
$(0\ 1\ 2\ 3\ 7\ 11\ 14\ 15 ),$&$( 0\ 1\ 2\ 3\ 8\ 9\ 27\ 28 ),$&$( 0\ 1\ 2\ 3\ 8\ 9\ 28\ 29 ),$&$( 0\ 1\ 2\ 3\ 9\ 10\ 27\ 28 )$\\ 
$(0\ 1\ 2\ 3\ 9\ 10\ 28\ 29 ),$&$( 0\ 1\ 2\ 3\ 10\ 11\ 27\ 28 ),$&$( 0\ 1\ 2\ 3\ 10\ 11\ 28\ 29 ),$&$( 0\ 1\ 2\ 3\ 11\ 12\ 28\ 29 )$\\ 
$(0\ 1\ 2\ 3\ 19\ 20\ 23\ 27 ),$&$( 0\ 1\ 2\ 3\ 19\ 23\ 26\ 27 ),$&$( 0\ 1\ 2\ 3\ 20\ 21\ 27\ 28 ),$&$( 0\ 1\ 2\ 3\ 20\ 21\ 28\ 29 )$\\ 
$(0\ 1\ 2\ 3\ 20\ 22\ 23\ 27 ),$&$( 0\ 1\ 2\ 3\ 21\ 22\ 27\ 28 ),$&$( 0\ 1\ 2\ 3\ 21\ 22\ 28\ 29 ),$&$( 0\ 1\ 2\ 3\ 22\ 23\ 27\ 28 )$\\ 
$(0\ 1\ 2\ 3\ 22\ 23\ 28\ 29 ),$&$( 0\ 1\ 2\ 3\ 23\ 24\ 26\ 27 ),$&$( 0\ 1\ 2\ 3\ 23\ 24\ 27\ 28 ),$&$( 0\ 1\ 2\ 3\ 23\ 24\ 28\ 29 )$\\ 
$(0\ 1\ 2\ 3\ 24\ 25\ 27\ 28 ),$&$( 0\ 1\ 2\ 3\ 24\ 25\ 28\ 29 ),$&$( 0\ 1\ 2\ 3\ 25\ 26\ 28\ 29 ),$&$( 0\ 1\ 2\ 6\ 7\ 10\ 11\ 13 )$\\ 
$(0\ 1\ 2\ 6\ 7\ 10\ 13\ 14 ),$&$( 0\ 1\ 2\ 6\ 7\ 11\ 12\ 13 ),$&$( 0\ 1\ 2\ 7\ 8\ 10\ 11\ 14 ),$&$( 0\ 1\ 2\ 7\ 8\ 11\ 14\ 15 )$\\ 
$(0\ 1\ 2\ 7\ 10\ 11\ 13\ 14 ),$&$( 0\ 1\ 2\ 8\ 10\ 11\ 14\ 15 ),$&$( 0\ 1\ 2\ 18\ 19\ 22\ 23\ 25 ),$&$( 0\ 1\ 2\ 18\ 19\ 22\ 25\ 26 )$\\ 
$(0\ 1\ 2\ 19\ 20\ 22\ 23\ 26 ),$&$( 0\ 1\ 2\ 19\ 20\ 23\ 26\ 27 ),$&$( 0\ 1\ 2\ 19\ 22\ 23\ 25\ 26 ),$&$( 0\ 1\ 2\ 20\ 22\ 23\ 26\ 27 )$\\ 
$(0\ 1\ 3\ 4\ 6\ 7\ 9\ 10 ),$&$( 0\ 1\ 3\ 4\ 6\ 7\ 10\ 11 ),$&$( 0\ 1\ 3\ 4\ 6\ 7\ 11\ 12 ),$&$( 0\ 1\ 3\ 4\ 6\ 7\ 12\ 13 )$\\ 
$(0\ 1\ 3\ 4\ 6\ 7\ 13\ 14 ),$&$( 0\ 1\ 3\ 4\ 6\ 7\ 14\ 15 ),$&$( 0\ 1\ 3\ 4\ 6\ 7\ 23\ 24 ),$&$( 0\ 1\ 3\ 4\ 6\ 7\ 24\ 25 )$\\ 
$(0\ 1\ 3\ 4\ 6\ 7\ 25\ 26 ),$&$( 0\ 1\ 3\ 4\ 6\ 7\ 26\ 27 ),$&$( 0\ 1\ 3\ 4\ 6\ 7\ 27\ 28 ),$&$( 0\ 1\ 3\ 4\ 7\ 8\ 10\ 11 )$\\ 
$(0\ 1\ 3\ 4\ 7\ 8\ 11\ 12 ),$&$( 0\ 1\ 3\ 4\ 7\ 8\ 24\ 26 ),$&$( 0\ 1\ 3\ 4\ 7\ 8\ 26\ 27 ),$&$( 0\ 1\ 3\ 4\ 7\ 8\ 27\ 28 )$\\ 
$(0\ 1\ 3\ 4\ 8\ 9\ 11\ 27 ),$&$( 0\ 1\ 3\ 4\ 8\ 9\ 27\ 28 ),$&$( 0\ 1\ 3\ 4\ 8\ 24\ 26\ 27 ),$&$( 0\ 1\ 3\ 4\ 9\ 11\ 27\ 28 )$\\ 
$(0\ 1\ 3\ 4\ 23\ 24\ 27\ 28 ),$&$( 0\ 1\ 3\ 7\ 8\ 11\ 12\ 14 ),$&$( 0\ 1\ 3\ 7\ 8\ 11\ 14\ 15 ),$&$( 0\ 1\ 3\ 7\ 8\ 12\ 14\ 15 )$\\ 
$(0\ 1\ 4\ 5\ 9\ 11\ 12\ 29 ),$&$( 0\ 1\ 4\ 7\ 8\ 12\ 14\ 15 ),$&$( 0\ 1\ 2\ 3\ 5\ 6\ 14\ 15 ),$&$( 0\ 1\ 2\ 3\ 6\ 7\ 14\ 15 )$\\ 
$(0\ 1\ 2\ 3\ 7\ 12\ 13\ 14 ),$&$( 0\ 1\ 2\ 3\ 8\ 10\ 11\ 15 ),$&$( 0\ 1\ 3\ 4\ 7\ 8\ 12\ 15 ),$&$( 0\ 1\ 3\ 4\ 7\ 12\ 14\ 15 )$\\ 
$(0\ 1\ 2\ 3\ 19\ 20\ 28\ 29 ),$&$( 0\ 1\ 2\ 3\ 19\ 20\ 27\ 28 ),$&$( 0\ 1\ 2\ 3\ 20\ 21\ 22\ 27 ),$&$( 0\ 1\ 2\ 3\ 19\ 23\ 24\ 26 )$\\ 
$(0\ 1\ 3\ 4\ 20\ 23\ 27\ 28 ),$&$( 0\ 1\ 3\ 4\ 20\ 21\ 23\ 28 ),$&$( 0\ 1\ 3\ 4\ 8\ 11\ 12\ 15 ),$&$( 0\ 1\ 3\ 4\ 20\ 23\ 24\ 27 )$\\ 
$(0\ 1\ 3\ 19\ 20\ 23\ 24\ 27 ),$&$( 0\ 1\ 4\ 5\ 8\ 9\ 12\ 28 ),$&$( 0\ 1\ 4\ 5\ 21\ 23\ 24\ 28 )$&
\end{tabular}

\newpage
 
Orbit representatives of $A_2$:

\begin{tabular}{llll}
\\
$f$-vector: & \multicolumn{3}{l}{$(31, 465, 3255, 11842, 23281, 25017, 13857, 3100)$}\\
&&&\\
\hline
&&&\\
$(0\ 1\ 2\ 3\ 4\ 5\ 14\ 15 ),$&$( 0\ 1\ 2\ 3\ 4\ 5\ 15\ 16 ),$&$( 0\ 1\ 2\ 3\ 4\ 5\ 16\ 20 ),$&$( 0\ 1\ 2\ 3\ 4\ 5\ 20\ 21 )$\\ 
$(0\ 1\ 2\ 3\ 4\ 5\ 21\ 22 ),$&$( 0\ 1\ 2\ 3\ 4\ 15\ 16\ 17 ),$&$( 0\ 1\ 2\ 3\ 4\ 15\ 17\ 19 ),$&$( 0\ 1\ 2\ 3\ 4\ 16\ 17\ 19 )$\\ 
$(0\ 1\ 2\ 3\ 4\ 16\ 18\ 19 ),$&$( 0\ 1\ 2\ 3\ 4\ 16\ 18\ 20 ),$&$( 0\ 1\ 2\ 3\ 4\ 18\ 19\ 20 ),$&$( 0\ 1\ 2\ 3\ 5\ 6\ 15\ 16 )$\\ 
$(0\ 1\ 2\ 3\ 5\ 6\ 16\ 21 ),$&$( 0\ 1\ 2\ 3\ 5\ 6\ 21\ 22 ),$&$( 0\ 1\ 2\ 3\ 5\ 16\ 18\ 20 ),$&$( 0\ 1\ 2\ 3\ 5\ 16\ 18\ 21 )$\\ 
$(0\ 1\ 2\ 3\ 5\ 18\ 20\ 21 ),$&$( 0\ 1\ 2\ 3\ 6\ 16\ 18\ 21 ),$&$( 0\ 1\ 2\ 3\ 6\ 16\ 18\ 22 ),$&$( 0\ 1\ 2\ 3\ 6\ 18\ 21\ 22 )$\\ 
$(0\ 1\ 2\ 3\ 7\ 16\ 18\ 22 ),$&$( 0\ 1\ 2\ 3\ 11\ 12\ 14\ 15 ),$&$( 0\ 1\ 2\ 3\ 11\ 12\ 15\ 16 ),$&$( 0\ 1\ 2\ 3\ 12\ 13\ 14\ 15 )$\\ 
$(0\ 1\ 2\ 3\ 12\ 13\ 15\ 16 ),$&$( 0\ 1\ 2\ 3\ 12\ 13\ 16\ 28 ),$&$( 0\ 1\ 2\ 3\ 12\ 13\ 28\ 29 ),$&$( 0\ 1\ 2\ 3\ 12\ 16\ 18\ 27 )$\\ 
$(0\ 1\ 2\ 3\ 12\ 16\ 18\ 28 ),$&$( 0\ 1\ 2\ 3\ 13\ 14\ 15\ 16 ),$&$( 0\ 1\ 2\ 3\ 13\ 14\ 16\ 29 ),$&$( 0\ 1\ 2\ 3\ 13\ 16\ 18\ 28 )$\\ 
$(0\ 1\ 2\ 3\ 13\ 16\ 18\ 29 ),$&$( 0\ 1\ 2\ 3\ 13\ 18\ 28\ 29 ),$&$( 0\ 1\ 2\ 3\ 14\ 16\ 18\ 29 ),$&$( 0\ 1\ 2\ 3\ 15\ 16\ 17\ 18 )$\\ 
$(0\ 1\ 2\ 3\ 18\ 19\ 21\ 22 ),$&$( 0\ 1\ 2\ 3\ 18\ 19\ 22\ 23 ),$&$( 0\ 1\ 2\ 3\ 18\ 19\ 28\ 29 ),$&$( 0\ 1\ 2\ 3\ 19\ 20\ 22\ 23 )$\\ 
$(0\ 1\ 2\ 4\ 5\ 15\ 16\ 20 ),$&$( 0\ 1\ 2\ 4\ 15\ 16\ 17\ 20 ),$&$( 0\ 1\ 2\ 4\ 16\ 17\ 19\ 20 ),$&$( 0\ 1\ 2\ 4\ 16\ 18\ 19\ 20 )$\\ 
$(0\ 1\ 2\ 5\ 6\ 15\ 16\ 21 ),$&$( 0\ 1\ 2\ 5\ 15\ 16\ 17\ 20 ),$&$( 0\ 1\ 2\ 5\ 15\ 16\ 17\ 21 ),$&$( 0\ 1\ 2\ 5\ 17\ 18\ 20\ 21 )$\\ 
$(0\ 1\ 2\ 6\ 15\ 16\ 17\ 21 ),$&$( 0\ 1\ 2\ 6\ 17\ 18\ 21\ 22 ),$&$( 0\ 1\ 2\ 11\ 12\ 15\ 16\ 27 ),$&$( 0\ 1\ 2\ 11\ 15\ 16\ 17\ 27 )$\\ 
$(0\ 1\ 2\ 12\ 13\ 15\ 16\ 28 ),$&$( 0\ 1\ 2\ 12\ 15\ 16\ 17\ 27 ),$&$( 0\ 1\ 2\ 12\ 15\ 16\ 17\ 28 ),$&$( 0\ 1\ 2\ 12\ 17\ 18\ 27\ 28 )$\\ 
$(0\ 1\ 2\ 13\ 14\ 16\ 17\ 29 ),$&$( 0\ 1\ 2\ 13\ 15\ 16\ 17\ 28 ),$&$( 0\ 1\ 2\ 13\ 17\ 18\ 28\ 29 ),$&$( 0\ 1\ 3\ 4\ 6\ 16\ 19\ 21 )$\\ 
$(0\ 1\ 3\ 4\ 6\ 16\ 19\ 22 ),$&$( 0\ 1\ 3\ 4\ 6\ 19\ 21\ 22 ),$&$( 0\ 1\ 3\ 4\ 7\ 16\ 19\ 22 ),$&$( 0\ 1\ 3\ 4\ 12\ 13\ 15\ 16 )$\\ 
$(0\ 1\ 3\ 4\ 13\ 14\ 16\ 29 ),$&$( 0\ 1\ 3\ 4\ 13\ 16\ 19\ 28 ),$&$( 0\ 1\ 3\ 4\ 13\ 16\ 19\ 29 ),$&$( 0\ 1\ 3\ 4\ 14\ 16\ 19\ 29 )$\\ 
$(0\ 1\ 3\ 5\ 16\ 18\ 20\ 21 ),$&$( 0\ 1\ 3\ 6\ 16\ 18\ 21\ 22 ),$&$( 0\ 1\ 3\ 6\ 16\ 19\ 21\ 22 ),$&$( 0\ 1\ 3\ 12\ 16\ 18\ 27\ 28 )$\\ 
$(0\ 1\ 3\ 16\ 18\ 19\ 22\ 23 ),$&$( 0\ 1\ 3\ 16\ 18\ 19\ 27\ 28 ),$&$( 0\ 1\ 4\ 5\ 14\ 16\ 20\ 29 ),$&$( 0\ 1\ 4\ 6\ 16\ 19\ 21\ 22 )$\\ 
$(0\ 1\ 2\ 3\ 6\ 7\ 15\ 16 ),$&$( 0\ 1\ 2\ 3\ 6\ 7\ 16\ 22 ),$&$( 0\ 1\ 2\ 6\ 7\ 16\ 17\ 22 ),$&$( 0\ 1\ 3\ 4\ 6\ 7\ 16\ 22 )$\\ 
$(0\ 1\ 2\ 3\ 18\ 19\ 27\ 28 ),$&$( 0\ 1\ 2\ 3\ 12\ 18\ 27\ 28 ),$&$( 0\ 1\ 2\ 11\ 16\ 17\ 26\ 27 ),$&$( 0\ 1\ 2\ 3\ 6\ 7\ 22\ 23 )$\\ 
$(0\ 1\ 2\ 3\ 7\ 18\ 22\ 23 ),$&$( 0\ 1\ 2\ 3\ 11\ 12\ 16\ 27 ),$&$( 0\ 1\ 2\ 3\ 11\ 12\ 27\ 28 ),$&$( 0\ 1\ 3\ 4\ 6\ 7\ 15\ 16 )$\\ 
$(0\ 1\ 3\ 4\ 6\ 7\ 22\ 23 ),$&$( 0\ 1\ 3\ 4\ 7\ 16\ 19\ 23 ),$&$( 0\ 1\ 3\ 4\ 7\ 19\ 22\ 23 ),$&$( 0\ 1\ 3\ 4\ 12\ 13\ 16\ 28 )$\\ 
$(0\ 1\ 3\ 4\ 12\ 16\ 19\ 28 ),$&$( 0\ 1\ 3\ 7\ 16\ 18\ 22\ 23 ),$&$( 0\ 1\ 3\ 7\ 16\ 19\ 22\ 23 ),$&$( 0\ 1\ 3\ 12\ 16\ 19\ 27\ 28 )$\\ 
$(0\ 1\ 4\ 5\ 13\ 16\ 20\ 29 ),$&$( 0\ 1\ 4\ 7\ 16\ 19\ 22\ 23 ),$&$( 0\ 1\ 4\ 7\ 16\ 20\ 22\ 23 ),$&$( 0\ 1\ 5\ 7\ 16\ 20\ 22\ 23 )$
\end{tabular}

\newpage

Orbit representatives of $A_3$:

\begin{tabular}{llll}
\\
$f$-vector: & \multicolumn{3}{l}{$(31, 465, 3131, 11067, 21235, 22289, 12059, 2635)$}\\
&&&\\
\hline
&&&\\
$(0\ 1\ 2\ 3\ 7\ 8\ 15\ 16 ),$&$( 0\ 1\ 2\ 3\ 7\ 8\ 16\ 23 ),$&$( 0\ 1\ 2\ 3\ 7\ 8\ 23\ 24 ),$&$( 0\ 1\ 2\ 3\ 8\ 10\ 15\ 16 )$\\ 
$(0\ 1\ 2\ 3\ 8\ 10\ 16\ 26 ),$&$( 0\ 1\ 2\ 3\ 8\ 16\ 18\ 23 ),$&$( 0\ 1\ 2\ 3\ 8\ 16\ 18\ 26 ),$&$( 0\ 1\ 2\ 3\ 8\ 18\ 23\ 24 )$\\ 
$(0\ 1\ 2\ 3\ 8\ 18\ 24\ 26 ),$&$( 0\ 1\ 2\ 3\ 10\ 11\ 16\ 26 ),$&$( 0\ 1\ 2\ 3\ 10\ 11\ 26\ 27 ),$&$( 0\ 1\ 2\ 3\ 11\ 16\ 18\ 26 )$\\ 
$(0\ 1\ 2\ 3\ 11\ 18\ 26\ 27 ),$&$( 0\ 1\ 2\ 3\ 18\ 19\ 24\ 26 ),$&$( 0\ 1\ 2\ 3\ 18\ 19\ 26\ 27 ),$&$( 0\ 1\ 2\ 6\ 7\ 15\ 17\ 22 )$\\ 
$(0\ 1\ 2\ 7\ 8\ 9\ 15\ 17 ),$&$( 0\ 1\ 2\ 7\ 8\ 9\ 17\ 23 ),$&$( 0\ 1\ 2\ 7\ 8\ 9\ 23\ 25 ),$&$( 0\ 1\ 2\ 7\ 8\ 10\ 14\ 15 )$\\ 
$(0\ 1\ 2\ 7\ 8\ 15\ 16\ 17 ),$&$( 0\ 1\ 2\ 7\ 8\ 16\ 17\ 23 ),$&$( 0\ 1\ 2\ 7\ 8\ 23\ 24\ 25 ),$&$( 0\ 1\ 2\ 7\ 9\ 15\ 17\ 25 )$\\ 
$(0\ 1\ 2\ 7\ 9\ 17\ 23\ 25 ),$&$( 0\ 1\ 2\ 7\ 16\ 17\ 22\ 23 ),$&$( 0\ 1\ 2\ 7\ 16\ 18\ 22\ 23 ),$&$( 0\ 1\ 2\ 8\ 9\ 10\ 16\ 25 )$\\ 
$(0\ 1\ 2\ 8\ 9\ 10\ 25\ 26 ),$&$( 0\ 1\ 2\ 8\ 9\ 15\ 16\ 17 ),$&$( 0\ 1\ 2\ 8\ 9\ 16\ 17\ 25 ),$&$( 0\ 1\ 2\ 8\ 9\ 17\ 23\ 25 )$\\ 
$(0\ 1\ 2\ 8\ 10\ 16\ 24\ 25 ),$&$( 0\ 1\ 2\ 8\ 10\ 16\ 24\ 26 ),$&$( 0\ 1\ 2\ 8\ 16\ 17\ 23\ 24 ),$&$( 0\ 1\ 2\ 8\ 16\ 17\ 24\ 25 )$\\ 
$(0\ 1\ 2\ 8\ 16\ 18\ 23\ 24 ),$&$( 0\ 1\ 2\ 8\ 16\ 18\ 24\ 26 ),$&$( 0\ 1\ 2\ 9\ 10\ 15\ 16\ 17 ),$&$( 0\ 1\ 2\ 9\ 10\ 15\ 17\ 25 )$\\ 
$(0\ 1\ 2\ 9\ 10\ 16\ 17\ 25 ),$&$( 0\ 1\ 2\ 10\ 11\ 15\ 17\ 26 ),$&$( 0\ 1\ 2\ 10\ 11\ 16\ 17\ 26 ),$&$( 0\ 1\ 2\ 10\ 16\ 17\ 25\ 26 )$\\ 
$(0\ 1\ 2\ 11\ 16\ 18\ 26\ 27 ),$&$( 0\ 1\ 2\ 18\ 19\ 23\ 25\ 26 ),$&$( 0\ 1\ 3\ 8\ 10\ 11\ 15\ 16 ),$&$( 0\ 1\ 3\ 8\ 10\ 11\ 16\ 26 )$\\ 
$(0\ 1\ 3\ 8\ 10\ 11\ 26\ 27 ),$&$( 0\ 1\ 3\ 8\ 11\ 16\ 24\ 26 ),$&$( 0\ 1\ 3\ 8\ 16\ 18\ 23\ 24 ),$&$( 0\ 1\ 3\ 8\ 16\ 18\ 24\ 26 )$\\ 
$(0\ 1\ 3\ 11\ 16\ 18\ 26\ 27 ),$&$( 0\ 1\ 3\ 16\ 18\ 19\ 24\ 26 ),$&$( 0\ 1\ 5\ 6\ 14\ 16\ 21\ 29 ),$&$( 0\ 1\ 6\ 8\ 14\ 16\ 24\ 29 )$\\ 
$(0\ 1\ 6\ 8\ 16\ 21\ 23\ 24 ),$&$( 0\ 1\ 6\ 8\ 16\ 21\ 24\ 29 ),$&$( 0\ 1\ 7\ 8\ 15\ 16\ 23\ 24 ),$&$( 0\ 1\ 7\ 9\ 15\ 17\ 23\ 25 )$\\ 
$(0\ 2\ 7\ 9\ 15\ 17\ 23\ 25 ),$&$( 0\ 1\ 2\ 3\ 7\ 8\ 24\ 25 ),$&$( 0\ 1\ 2\ 3\ 7\ 8\ 25\ 26 ),$&$( 0\ 1\ 2\ 3\ 7\ 16\ 18\ 23 )$\\ 
$(0\ 1\ 2\ 3\ 8\ 9\ 10\ 26 ),$&$( 0\ 1\ 2\ 3\ 10\ 11\ 15\ 16 ),$&$( 0\ 1\ 2\ 6\ 7\ 15\ 16\ 17 ),$&$( 0\ 1\ 2\ 7\ 9\ 10\ 14\ 15 )$\\ 
$(0\ 1\ 3\ 8\ 11\ 24\ 26\ 27 ),$&$( 0\ 1\ 3\ 8\ 16\ 19\ 23\ 24 ),$&$( 0\ 1\ 3\ 11\ 16\ 24\ 26\ 27 ),$&$( 0\ 1\ 3\ 16\ 18\ 19\ 23\ 24 )$\\ 
$(0\ 1\ 2\ 3\ 9\ 10\ 26\ 27 ),$&$( 0\ 1\ 2\ 3\ 8\ 9\ 26\ 27 ),$&$( 0\ 1\ 2\ 3\ 11\ 16\ 18\ 27 ),$&$( 0\ 1\ 2\ 3\ 8\ 24\ 25\ 26 )$\\ 
$(0\ 1\ 2\ 3\ 18\ 19\ 23\ 24 ),$&$( 0\ 1\ 2\ 10\ 11\ 15\ 16\ 17 ),$&$( 0\ 1\ 2\ 18\ 19\ 23\ 24\ 26 ),$&$( 0\ 1\ 3\ 16\ 19\ 24\ 26\ 27 )$\\ 
$(0\ 1\ 5\ 8\ 16\ 21\ 23\ 24 ),$&$( 0\ 1\ 3\ 16\ 18\ 19\ 26\ 27 ),$&$( 0\ 1\ 3\ 8\ 11\ 16\ 24\ 27 ),$&$( 0\ 1\ 5\ 8\ 16\ 21\ 24\ 29 )$\\ 
$(0\ 2\ 7\ 10\ 15\ 18\ 23\ 26 )$&&&
\end{tabular}

\bigskip

Orbit representatives of $A_4$:

\begin{tabular}{llll}
\\
$f$-vector: & \multicolumn{3}{l}{$(31, 465, 2728, 8153, 13547, 12679, 6262, 1271)$}\\
&&&\\
\hline
&&&\\
$(0\ 1\ 3\ 4\ 7\ 8\ 16\ 23 ),$&$( 0\ 1\ 3\ 4\ 8\ 11\ 15\ 16 ),$&$( 0\ 1\ 3\ 4\ 8\ 11\ 16\ 27 ),$&$( 0\ 1\ 3\ 4\ 8\ 16\ 19\ 23 )$\\ 
$(0\ 1\ 3\ 4\ 8\ 16\ 19\ 27 ),$&$( 0\ 1\ 3\ 4\ 8\ 19\ 23\ 24 ),$&$( 0\ 1\ 3\ 4\ 8\ 19\ 24\ 27 ),$&$( 0\ 1\ 3\ 4\ 11\ 12\ 16\ 27 )$\\ 
$(0\ 1\ 3\ 4\ 12\ 16\ 19\ 27 ),$&$( 0\ 1\ 3\ 4\ 12\ 19\ 27\ 28 ),$&$( 0\ 1\ 3\ 4\ 19\ 20\ 24\ 27 ),$&$( 0\ 1\ 3\ 8\ 16\ 19\ 24\ 27 )$\\ 
$(0\ 1\ 4\ 5\ 8\ 12\ 15\ 16 ),$&$( 0\ 1\ 4\ 5\ 8\ 12\ 16\ 28 ),$&$( 0\ 1\ 4\ 5\ 8\ 16\ 20\ 23 ),$&$( 0\ 1\ 4\ 5\ 8\ 16\ 20\ 28 )$\\ 
$(0\ 1\ 4\ 5\ 8\ 20\ 23\ 24 ),$&$( 0\ 1\ 4\ 5\ 8\ 20\ 24\ 28 ),$&$( 0\ 1\ 4\ 5\ 12\ 13\ 16\ 28 ),$&$( 0\ 1\ 4\ 5\ 13\ 16\ 20\ 28 )$\\ 
$(0\ 1\ 4\ 5\ 20\ 21\ 24\ 28 ),$&$( 0\ 1\ 4\ 8\ 11\ 12\ 16\ 27 ),$&$( 0\ 1\ 4\ 8\ 12\ 16\ 24\ 27 ),$&$( 0\ 1\ 4\ 8\ 12\ 16\ 24\ 28 )$\\ 
$(0\ 1\ 4\ 8\ 16\ 19\ 23\ 24 ),$&$( 0\ 1\ 4\ 8\ 16\ 19\ 24\ 27 ),$&$( 0\ 1\ 4\ 8\ 16\ 20\ 23\ 24 ),$&$( 0\ 1\ 4\ 8\ 16\ 20\ 24\ 28 )$\\ 
$(0\ 1\ 4\ 16\ 19\ 20\ 24\ 27 ),$&$( 0\ 1\ 5\ 8\ 13\ 16\ 24\ 28 ),$&$( 0\ 1\ 5\ 8\ 13\ 16\ 24\ 29 ),$&$( 0\ 1\ 5\ 8\ 16\ 20\ 23\ 24 )$\\ 
$(0\ 1\ 5\ 8\ 16\ 20\ 24\ 28 ),$&$( 0\ 3\ 7\ 10\ 15\ 18\ 23\ 26 ),$&$( 0\ 3\ 7\ 11\ 15\ 19\ 23\ 27 ),$&$( 0\ 1\ 3\ 4\ 7\ 8\ 15\ 16 )$\\ 
$(0\ 1\ 3\ 4\ 19\ 20\ 27\ 28 ),$&$( 0\ 1\ 3\ 4\ 7\ 8\ 23\ 24 ),$&$( 0\ 1\ 3\ 4\ 11\ 12\ 15\ 16 ),$&$( 0\ 1\ 3\ 4\ 11\ 12\ 27\ 28 )$\\ 
$(0\ 1\ 3\ 4\ 19\ 20\ 23\ 24 )$&&&
\end{tabular}

\section{Facet list of equilibrium triangulation of $\mathbb{R}P^4$}
\label{app:rp4}

\begin{tabular}{lllll}
\\
$f$-vector: & \multicolumn{4}{l}{$(21, 180, 520, 600, 240)$}\\
&&&&\\
\hline
&&&&\\
$( 1\ 4\ 6\ 7\ 17 )$,&$ ( 1\ 2\ 4\ 6\ 17 )$,&$ ( 1\ 5\ 6\ 7\ 17 )$,&$ ( 1\ 3\ 4\ 7\ 17 )$,&$ ( 4\ 6\ 7\ 8\ 17 )$,\\
$( 1\ 4\ 6\ 7\ 18 )$,&$ ( 1\ 2\ 4\ 6\ 18 )$,&$ ( 1\ 5\ 6\ 7\ 18 )$,&$ ( 1\ 3\ 4\ 7\ 18 )$,&$ ( 4\ 6\ 7\ 8\ 18 )$,\\
$( 9\ 12\ 14\ 15\ 17 )$,&$ ( 9\ 10\ 12\ 14\ 17 )$,&$ ( 9\ 13\ 14\ 15\ 17 )$,&$ ( 9\ 11\ 12\ 15\ 17 )$,&$ ( 12\ 14\ 15\ 16\ 17 )$,\\
$( 9\ 12\ 14\ 15\ 18 )$,&$ ( 9\ 10\ 12\ 14\ 18 )$,&$ ( 9\ 13\ 14\ 15\ 18 )$,&$ ( 9\ 11\ 12\ 15\ 18 )$,&$ ( 12\ 14\ 15\ 16\ 18 )$,\\
$( 1\ 4\ 10\ 11\ 17 )$,&$ ( 1\ 2\ 4\ 10\ 17 )$,&$ ( 1\ 9\ 10\ 11\ 17 )$,&$ ( 1\ 3\ 4\ 11\ 17 )$,&$ ( 4\ 10\ 11\ 12\ 17 )$,\\
$( 1\ 4\ 10\ 11\ 19 )$,&$ ( 1\ 2\ 4\ 10\ 19 )$,&$ ( 1\ 9\ 10\ 11\ 19 )$,&$ ( 1\ 3\ 4\ 11\ 19 )$,&$ ( 4\ 10\ 11\ 12\ 19 )$,\\
$( 5\ 8\ 14\ 15\ 17 )$,&$ ( 5\ 6\ 8\ 14\ 17 )$,&$ ( 5\ 13\ 14\ 15\ 17 )$,&$ ( 5\ 7\ 8\ 15\ 17 )$,&$ ( 8\ 14\ 15\ 16\ 17 )$,\\
$( 5\ 8\ 14\ 15\ 19 )$,&$ ( 5\ 6\ 8\ 14\ 19 )$,&$ ( 5\ 13\ 14\ 15\ 19 )$,&$ ( 5\ 7\ 8\ 15\ 19 )$,&$ ( 8\ 14\ 15\ 16\ 19 )$,\\
$( 1\ 6\ 10\ 13\ 17 )$,&$ ( 1\ 2\ 6\ 10\ 17 )$,&$ ( 1\ 9\ 10\ 13\ 17 )$,&$ ( 1\ 5\ 6\ 13\ 17 )$,&$ ( 6\ 10\ 13\ 14\ 17 )$,\\
$( 1\ 6\ 10\ 13\ 20 )$,&$ ( 1\ 2\ 6\ 10\ 20 )$,&$ ( 1\ 9\ 10\ 13\ 20 )$,&$ ( 1\ 5\ 6\ 13\ 20 )$,&$ ( 6\ 10\ 13\ 14\ 20 )$,\\
$( 3\ 8\ 12\ 15\ 17 )$,&$ ( 3\ 4\ 8\ 12\ 17 )$,&$ ( 3\ 11\ 12\ 15\ 17 )$,&$ ( 3\ 7\ 8\ 15\ 17 )$,&$ ( 8\ 12\ 15\ 16\ 17 )$,\\
$( 3\ 8\ 12\ 15\ 20 )$,&$ ( 3\ 4\ 8\ 12\ 20 )$,&$ ( 3\ 11\ 12\ 15\ 20 )$,&$ ( 3\ 7\ 8\ 15\ 20 )$,&$ ( 8\ 12\ 15\ 16\ 20 )$,\\
$( 1\ 7\ 11\ 13\ 17 )$,&$ ( 1\ 3\ 7\ 11\ 17 )$,&$ ( 1\ 9\ 11\ 13\ 17 )$,&$ ( 1\ 5\ 7\ 13\ 17 )$,&$ ( 7\ 11\ 13\ 15\ 17 )$,\\
$( 1\ 7\ 11\ 13\ 21 )$,&$ ( 1\ 3\ 7\ 11\ 21 )$,&$ ( 1\ 9\ 11\ 13\ 21 )$,&$ ( 1\ 5\ 7\ 13\ 21 )$,&$ ( 7\ 11\ 13\ 15\ 21 )$,\\
$( 2\ 8\ 12\ 14\ 17 )$,&$ ( 2\ 4\ 8\ 12\ 17 )$,&$ ( 2\ 10\ 12\ 14\ 17 )$,&$ ( 2\ 6\ 8\ 14\ 17 )$,&$ ( 8\ 12\ 14\ 16\ 17 )$,\\
$( 2\ 8\ 12\ 14\ 21 )$,&$ ( 2\ 4\ 8\ 12\ 21 )$,&$ ( 2\ 10\ 12\ 14\ 21 )$,&$ ( 2\ 6\ 8\ 14\ 21 )$,&$ ( 8\ 12\ 14\ 16\ 21 )$,\\
$( 1\ 4\ 14\ 15\ 18 )$,&$ ( 1\ 3\ 4\ 14\ 18 )$,&$ ( 1\ 2\ 4\ 15\ 18 )$,&$ ( 1\ 14\ 15\ 16\ 18 )$,&$ ( 4\ 13\ 14\ 15\ 18 )$,\\
$( 1\ 4\ 14\ 15\ 19 )$,&$ ( 1\ 3\ 4\ 14\ 19 )$,&$ ( 1\ 2\ 4\ 15\ 19 )$,&$ ( 1\ 14\ 15\ 16\ 19 )$,&$ ( 4\ 13\ 14\ 15\ 19 )$,\\
$( 5\ 8\ 10\ 11\ 18 )$,&$ ( 5\ 7\ 8\ 10\ 18 )$,&$ ( 5\ 6\ 8\ 11\ 18 )$,&$ ( 5\ 10\ 11\ 12\ 18 )$,&$ ( 8\ 9\ 10\ 11\ 18 )$,\\
$( 5\ 8\ 10\ 11\ 19 )$,&$ ( 5\ 7\ 8\ 10\ 19 )$,&$ ( 5\ 6\ 8\ 11\ 19 )$,&$ ( 5\ 10\ 11\ 12\ 19 )$,&$ ( 8\ 9\ 10\ 11\ 19 )$,\\
$( 1\ 6\ 12\ 15\ 18 )$,&$ ( 1\ 2\ 6\ 15\ 18 )$,&$ ( 1\ 12\ 15\ 16\ 18 )$,&$ ( 1\ 5\ 6\ 12\ 18 )$,&$ ( 6\ 11\ 12\ 15\ 18 )$,\\
$( 1\ 6\ 12\ 15\ 20 )$,&$ ( 1\ 2\ 6\ 15\ 20 )$,&$ ( 1\ 12\ 15\ 16\ 20 )$,&$ ( 1\ 5\ 6\ 12\ 20 )$,&$ ( 6\ 11\ 12\ 15\ 20 )$,\\
$( 3\ 8\ 10\ 13\ 18 )$,&$ ( 3\ 4\ 8\ 13\ 18 )$,&$ ( 3\ 10\ 13\ 14\ 18 )$,&$ ( 3\ 7\ 8\ 10\ 18 )$,&$ ( 8\ 9\ 10\ 13\ 18 )$,\\
$( 3\ 8\ 10\ 13\ 20 )$,&$ ( 3\ 4\ 8\ 13\ 20 )$,&$ ( 3\ 10\ 13\ 14\ 20 )$,&$ ( 3\ 7\ 8\ 10\ 20 )$,&$ ( 8\ 9\ 10\ 13\ 20 )$,\\
$( 1\ 7\ 12\ 14\ 18 )$,&$ ( 1\ 3\ 7\ 14\ 18 )$,&$ ( 1\ 12\ 14\ 16\ 18 )$,&$ ( 1\ 5\ 7\ 12\ 18 )$,&$ ( 7\ 10\ 12\ 14\ 18 )$,\\
$( 1\ 7\ 12\ 14\ 21 )$,&$ ( 1\ 3\ 7\ 14\ 21 )$,&$ ( 1\ 12\ 14\ 16\ 21 )$,&$ ( 1\ 5\ 7\ 12\ 21 )$,&$ ( 7\ 10\ 12\ 14\ 21 )$,\\
$( 2\ 8\ 11\ 13\ 18 )$,&$ ( 2\ 4\ 8\ 13\ 18 )$,&$ ( 2\ 11\ 13\ 15\ 18 )$,&$ ( 2\ 6\ 8\ 11\ 18 )$,&$ ( 8\ 9\ 11\ 13\ 18 )$,\\
$( 2\ 8\ 11\ 13\ 21 )$,&$ ( 2\ 4\ 8\ 13\ 21 )$,&$ ( 2\ 11\ 13\ 15\ 21 )$,&$ ( 2\ 6\ 8\ 11\ 21 )$,&$ ( 8\ 9\ 11\ 13\ 21 )$,\\
$( 1\ 8\ 10\ 15\ 19 )$,&$ ( 1\ 2\ 10\ 15\ 19 )$,&$ ( 1\ 8\ 15\ 16\ 19 )$,&$ ( 1\ 8\ 9\ 10\ 19 )$,&$ ( 7\ 8\ 10\ 15\ 19 )$,\\
$( 1\ 8\ 10\ 15\ 20 )$,&$ ( 1\ 2\ 10\ 15\ 20 )$,&$ ( 1\ 8\ 15\ 16\ 20 )$,&$ ( 1\ 8\ 9\ 10\ 20 )$,&$ ( 7\ 8\ 10\ 15\ 20 )$,\\
$( 3\ 6\ 12\ 13\ 19 )$,&$ ( 3\ 4\ 12\ 13\ 19 )$,&$ ( 3\ 6\ 13\ 14\ 19 )$,&$ ( 3\ 6\ 11\ 12\ 19 )$,&$ ( 5\ 6\ 12\ 13\ 19 )$,\\
$( 3\ 6\ 12\ 13\ 20 )$,&$ ( 3\ 4\ 12\ 13\ 20 )$,&$ ( 3\ 6\ 13\ 14\ 20 )$,&$ ( 3\ 6\ 11\ 12\ 20 )$,&$ ( 5\ 6\ 12\ 13\ 20 )$,\\
$( 1\ 8\ 11\ 14\ 19 )$,&$ ( 1\ 3\ 11\ 14\ 19 )$,&$ ( 1\ 8\ 14\ 16\ 19 )$,&$ ( 1\ 8\ 9\ 11\ 19 )$,&$ ( 6\ 8\ 11\ 14\ 19 )$,\\
$( 1\ 8\ 11\ 14\ 21 )$,&$ ( 1\ 3\ 11\ 14\ 21 )$,&$ ( 1\ 8\ 14\ 16\ 21 )$,&$ ( 1\ 8\ 9\ 11\ 21 )$,&$ ( 6\ 8\ 11\ 14\ 21 )$,\\
$( 2\ 7\ 12\ 13\ 19 )$,&$ ( 2\ 4\ 12\ 13\ 19 )$,&$ ( 2\ 7\ 13\ 15\ 19 )$,&$ ( 2\ 7\ 10\ 12\ 19 )$,&$ ( 5\ 7\ 12\ 13\ 19 )$,\\
$( 2\ 7\ 12\ 13\ 21 )$,&$ ( 2\ 4\ 12\ 13\ 21 )$,&$ ( 2\ 7\ 13\ 15\ 21 )$,&$ ( 2\ 7\ 10\ 12\ 21 )$,&$ ( 5\ 7\ 12\ 13\ 21 )$,\\
$( 1\ 8\ 12\ 13\ 20 )$,&$ ( 1\ 5\ 12\ 13\ 20 )$,&$ ( 1\ 8\ 12\ 16\ 20 )$,&$ ( 1\ 8\ 9\ 13\ 20 )$,&$ ( 4\ 8\ 12\ 13\ 20 )$,\\
$( 1\ 8\ 12\ 13\ 21 )$,&$ ( 1\ 5\ 12\ 13\ 21 )$,&$ ( 1\ 8\ 12\ 16\ 21 )$,&$ ( 1\ 8\ 9\ 13\ 21 )$,&$ ( 4\ 8\ 12\ 13\ 21 )$,\\
$( 2\ 7\ 11\ 14\ 20 )$,&$ ( 2\ 6\ 11\ 14\ 20 )$,&$ ( 2\ 7\ 11\ 15\ 20 )$,&$ ( 2\ 7\ 10\ 14\ 20 )$,&$ ( 3\ 7\ 11\ 14\ 20 )$,\\
$( 2\ 7\ 11\ 14\ 21 )$,&$ ( 2\ 6\ 11\ 14\ 21 )$,&$ ( 2\ 7\ 11\ 15\ 21 )$,&$ ( 2\ 7\ 10\ 14\ 21 )$,&$ ( 3\ 7\ 11\ 14\ 21 )$,\\
$( 5\ 6\ 7\ 8\ 17 )$,&$ ( 5\ 6\ 7\ 8\ 18 )$,&$ ( 9\ 10\ 11\ 12\ 17 )$,&$ ( 9\ 10\ 11\ 12\ 18 )$,&$ ( 3\ 4\ 7\ 8\ 17 )$,\\
$( 3\ 4\ 7\ 8\ 18 )$,&$ ( 9\ 10\ 13\ 14\ 17 )$,&$ ( 9\ 10\ 13\ 14\ 18 )$,&$ ( 2\ 4\ 6\ 8\ 17 )$,&$ ( 2\ 4\ 6\ 8\ 18 )$,\\
$( 9\ 11\ 13\ 15\ 17 )$,&$ ( 9\ 11\ 13\ 15\ 18 )$,&$ ( 3\ 4\ 11\ 12\ 17 )$,&$ ( 3\ 4\ 11\ 12\ 19 )$,&$ ( 5\ 6\ 13\ 14\ 17 )$,\\
$( 5\ 6\ 13\ 14\ 19 )$,&$ ( 2\ 4\ 10\ 12\ 17 )$,&$ ( 2\ 4\ 10\ 12\ 19 )$,&$ ( 5\ 7\ 13\ 15\ 17 )$,&$ ( 5\ 7\ 13\ 15\ 19 )$,\\
$( 2\ 6\ 10\ 14\ 17 )$,&$ ( 2\ 6\ 10\ 14\ 20 )$,&$ ( 3\ 7\ 11\ 15\ 17 )$,&$ ( 3\ 7\ 11\ 15\ 20 )$,&$ ( 3\ 4\ 13\ 14\ 18 )$,\\
$( 3\ 4\ 13\ 14\ 19 )$,&$ ( 5\ 6\ 11\ 12\ 18 )$,&$ ( 5\ 6\ 11\ 12\ 19 )$,&$ ( 2\ 4\ 13\ 15\ 18 )$,&$ ( 2\ 4\ 13\ 15\ 19 )$,\\
$( 5\ 7\ 10\ 12\ 18 )$,&$ ( 5\ 7\ 10\ 12\ 19 )$,&$ ( 2\ 6\ 11\ 15\ 18 )$,&$ ( 2\ 6\ 11\ 15\ 20 )$,&$ ( 3\ 7\ 10\ 14\ 18 )$,\\
$( 3\ 7\ 10\ 14\ 20 )$,&$ ( 2\ 7\ 10\ 15\ 19 )$,&$ ( 2\ 7\ 10\ 15\ 20 )$,&$ ( 3\ 6\ 11\ 14\ 19 )$,&$ ( 3\ 6\ 11\ 14\ 20 )$
\end{tabular}

\section{Candidate for $6$-dimensional ``solid torus'' for Hopf triangulation of $7$-sphere}
\label{app:d6}
  
Below are the generators of the $\mathbb{Z}_{31}$-orbits of the second set of candidates for the $6$-dimensional 
subsets of a Hopf triangulation of $S^7$ generating a perfect equilibrium triangulation of $\mathbb{C}P^4$.

The other candidates can be obtained by iteratively applying 
\[\sigma=(1,2,4,8,16)(3,6,12,24,17)(5,10,20,9,18)(7,14,28,25,19)(11,22,13,26,21)(15,30,29,27,23)\]
to the complex.
  
  \begin{tabular}{lllll}
\\
$f$-vector: & \multicolumn{4}{l}{$( 31, 465, 2294, 5332, 6417, 3875, 930)$}\\
&&&&\\
\hline
&&&&\\
  
  $( 0\ 1\ 2\ 3\ 4\ 5\ 13 ),$&$( 0\ 1\ 2\ 3\ 4\ 5\ 23 ),$&$( 0\ 1\ 2\ 3\ 4\ 12\ 13 ),$&$( 0\ 1\ 2\ 3\ 4\ 22\ 23 ),$&$( 0\ 1\ 2\ 3\ 5\ 7\ 15 )$\\
$( 0\ 1\ 2\ 3\ 5\ 7\ 23 ),$&$( 0\ 1\ 2\ 3\ 5\ 13\ 15 ),$&$( 0\ 1\ 2\ 3\ 11\ 12\ 13 ),$&$( 0\ 1\ 2\ 3\ 11\ 13\ 15 ),$&$( 0\ 1\ 2\ 3\ 11\ 27\ 29 )$\\ 
$( 0\ 1\ 2\ 3\ 19\ 21\ 23 ),$&$( 0\ 1\ 2\ 3\ 19\ 21\ 29 ),$&$( 0\ 1\ 2\ 3\ 19\ 27\ 29 ),$&$( 0\ 1\ 2\ 3\ 21\ 22\ 23 ),$&$( 0\ 1\ 2\ 4\ 5\ 6\ 14 )$\\ 
$( 0\ 1\ 2\ 4\ 5\ 6\ 23 ),$&$( 0\ 1\ 2\ 4\ 5\ 13\ 14 ),$&$( 0\ 1\ 2\ 4\ 6\ 22\ 23 ),$&$( 0\ 1\ 2\ 4\ 12\ 13\ 14 ),$&$( 0\ 1\ 2\ 5\ 6\ 7\ 15 )$\\
$( 0\ 1\ 2\ 5\ 6\ 7\ 23 ),$&$( 0\ 1\ 2\ 5\ 6\ 14\ 15 ),$&$( 0\ 1\ 2\ 5\ 13\ 14\ 15 ),$&$( 0\ 1\ 2\ 10\ 11\ 13\ 15 ),$&$( 0\ 1\ 2\ 10\ 11\ 27\ 29 )$\\ 
$( 0\ 1\ 2\ 10\ 12\ 13\ 14 ),$&$( 0\ 1\ 2\ 10\ 13\ 14\ 15 ),$&$( 0\ 1\ 2\ 18\ 19\ 27\ 28 ),$&$( 0\ 1\ 2\ 18\ 20\ 22\ 23 ),$&$( 0\ 1\ 2\ 19\ 20\ 28\ 29 )$
\end{tabular}

\end{document}